\numberwithin{equation}{section}
\newtheorem{Assumption}{Assumption}[section]
\newtheorem{theorem}{Theorem}[section]
\newtheorem{lemma}[theorem]{Lemma}
\newtheorem{proposition}[theorem]{Proposition}
\newtheorem{Corollary}[theorem]{Corollary}
\newtheorem{remark}[theorem]{Remark}
\newtheorem{definition}{Definition}[section]
\begin{document}
	
	\title{\bf Dynamics for a viscoelastic
		beam equation with past history and nonlocal boundary dissipation}
	
	\author{
		\small {\bf Linfang Liu}  \footnote{Email: liulinfang2020@nwu.edu.cn}
		\footnote{The work of Lin F. Liu has been partially supported by NSF of China (Nos. 11901448) as well as by Scientific Research  Foundation of Northwest University of China. }\\
		\small School of Mathematics, Northwest University, \\
		\small Xi'an 710119, Shaanxi, People's Republic of China.\medskip\\
		\small {\bf Vando Narciso}  \footnote{Corresponding author. Email: vnarciso@uems.br.}
		\footnote{Supported by the Fundect/CNPq, Grant 15/2024.}\\
		\small Center of Exact and Earth Sciences, State University of Mato Grosso do Sul,\\
		\small 79804-970, Dourados, MS, Brazil.\medskip\\
		\small{{\bf Zhijian Yang}\thanks{ Supported by the National
				Natural Science Foundation   of China (Grant No.12171438). E-mail: yzjzzvt@zzu.edu.cn}}\\
		{\small School of Mathematics and Statistics, Zhengzhou University,} \\
		{\small Zhengzhou 450001, China.}}
	
	\date{}
	\maketitle

	\begin{abstract}
		This article aims to study the long-time dynamics of the linear viscoelastic plate equation
		$\displaystyle{u_{tt}+\Delta^2 u-\int_{\tau}^tg(t-s)\Delta^2u(s)ds=0}$ subject to nonlinear and nonlocal boundary conditions. This model, with $\tau=0$, was first considered by Cavalcanti (Discrete Contin. Dyn. Syst., 8(3), 675-695, 2002), where results of global existence and uniform decay rates of energy have been established. In this work, by taking $\tau=-\infty$, and considering the autonomous equivalent problem we prove that the dynamical system $(\mathcal{H},S_t)$ generated by the weak solutions has a compact global attractor $\mathfrak{A}$ (in the topology of the weak phase space $\mathcal{H}$), which in subcritical case has finite dimension and smoothness. Furthermore, when the force follows the {\it Hook Law}, we prove that $(\mathcal{H},S_t)$ possesses a (generalized) fractal exponential attractor $\mathfrak{A}_{\exp}$ with finite dimension in a space $\widetilde{\mathcal{H}}\supset\mathcal{H}$.
	\end{abstract}

	\noindent{{\bf Keywords:} Global attractor, beam equation, nonlinear boundary, nonlocal damping, viscoelastic effect.}
	
	\smallskip
	
	\noindent{{\bf 2010 MSC:}  35B40, 35B41, 35L75.
		
		\section{Introduction}
		\subsection{The model and its physical interpretation}
		From a physical point of view, it is known that	viscoelastic materials produce natural dissipative effects, which is due to the special property of these materials to maintain the memory of their past history, and, the mathematical expressions of these damping effects are described by integro-differential operators. In one-dimensional space, this context of viscoelastic effects applied to the study of the vibration of a beam can be represented by the following model
		$$\rho Au_{tt}+\alpha u_{xxxx}-\int_0^tg(t-s)u_{xxxx}(x,s)ds=F(x, t, u, u_t),$$
		where $\rho, A$, and $\alpha$ denote, respectively, the density, the cross-sectional area, and the tension stiffness. The function $F$ represents the action of source and dissipative terms and $g$ is so-called relaxation function. Motivated by this class of problems, we propose in this work to study the asymptotic behavior of the following class of beam/plate equation with viscoelastic effects and subject to nonlinear boundary conditions
		\begin{eqnarray*}\left\{\begin{array}{l}\label{P}\displaystyle{u_{tt}+\Delta^2 u-\int_{\tau}^tg(t-s)\Delta^2u(s)ds=0,\quad \mbox{in}\quad\Omega\times[0,\infty),}\\
				\displaystyle{u=\frac{\partial u}{\partial \nu}=0\quad \mbox{on}\quad \Gamma_0\times[0,\infty),}\\
				\displaystyle{\Delta u-\int_{\tau}^tg(t-s)\Delta u(s)ds=0\quad\mbox{on}\quad \Gamma_1\times[0,\infty),}\\
				\displaystyle{\frac{\partial(\Delta u)}{\partial \nu}-\int_{\tau}^tg(t-s)\frac{\partial(\Delta u(s))}{\partial\nu}ds=f(u)+a(t)u_t\quad\mbox{on}\quad \Gamma_1\times[0,\infty),}\\
				u(x,0)=u_{0}(x),\quad u_t(x,0)=u_1(x),\quad x\in  \Omega,
			\end{array}\right.\quad(*)\end{eqnarray*}
		where $\Omega\subset \mathbb{R}^n$, $n\geq1$, is a nonempty bounded open set  with smooth boundary $\partial\Omega=\Gamma=\Gamma_0\cup\Gamma_1$. Here, $\Gamma_0,\Gamma_1$ are closed and $\Gamma_0\cap\Gamma_1=\emptyset$, with $meas(\Gamma_0)\neq0$, and $\nu$ is the unit outward normal to $\Gamma$. $a(t)$  is a nonlocal coefficient given by $a(t)=M\left(\int_{\Gamma_1}|u(x,t)|^2d\Gamma\right),$ where $M\in C^1(\mathbb{R}^+)$ is a non-degenerate function, $f(u)$ is a nonlinear sourcing term like $f(u)\sim |u|^{p}u-C$, and $g$ represents the kernel of the memory term which will be assumed to decay exponentially. This problem was first studied by Cavalcante in \cite{Cavalcanti-2002}, with $\tau=0$ and assuming  $f(u)=k|u|^{\gamma}u$. Results of global existence and uniform energy decay were obtained. In the physical sense, Eq. $(*)$ is interpreted by the deflection $u(x,t)$ of a viscoelastic beam (when $n=1$) or a viscoelastic plate (when $n=2$), which in relation to its boundary is clamped in the internal portion $\Gamma_0$ and on the external portion $\Gamma_1$ is supported by a bearing with nonlinear responses characterized by the function $f(u)$ and is also subject to frictional dissipation with intensity dependent on the nonlocal coefficient $a(u,t)$, which assumes the distributional average on the whole portion $\Gamma_1$ of the boundary $\Gamma$.

		\subsection{Equivalent autonomous system}
		Our main objective of the present work is to extend the analysis of problem $(*)$ to the context of the dynamics of $(*)$ associated with the case in which the set of stationary points of the problem is non-trivial. In other words, the analysis of $(*)$  in the sense of the existence of global attractors. However, system $(*)$ is non-autonomous. To get around this situation, let's assume problem $(*)$  with $\tau=-\infty$. Then, following Dafermos \cite{Dafermos}, we shall add a new variable $\eta^t$ to the systems which corresponds to the relative displacement history
		$$\eta^t=\eta^t(x,s)=u(x,t)-u(x,t-s),\quad (x,s)\in \Omega\times \mathbb{R}^+,\;t\ge 0.$$
		Differentiating the above equation with respect to $t$, we get
		$$\eta^t_t(x,s)=-\eta^t_s(x,s)+u_t(x,t),\quad (x,s)\in \Omega\times \mathbb{R}^+,\;t\ge 0,$$
		and we can take as initial condition $(t=0)$
		$$\eta^0(x,s)=u_0(x,0)-u_0(x,-s),\quad (x,s)\in \Omega\times \mathbb{R}^+.$$
		By noting that $\Delta^2\eta^t(s)=\Delta^2 u(x,t)-\Delta^2 u(x,t-s)$, the original memory term can be rewritten as
		\begin{eqnarray*}
			-	\int_{-\infty}^tg(t-s)\Delta^2u(s)ds&=&-\int_0^{\infty}g(\tau)\Delta^2u(t-\tau)d\tau\\
			&=&\int_0^{\infty}g(\tau)\Delta^2\eta^t(\tau)d\tau-\int_{0}^{\infty}g(\tau)d\tau\Delta^2u(t).
		\end{eqnarray*}
		Hence, taking for simplicity $\kappa:=1-\int_0^{\infty}g(\tau)d\tau$, the non-autonomous system \eqref{P} can be converted in the following autonomous system
		\begin{eqnarray}\left\{\begin{array}{l}\label{P}\displaystyle{u_{tt}+\kappa\Delta^2 u+\int_{0}^{\infty}g(\tau)\Delta^2\eta^t(\tau)d\tau=0,\quad \mbox{in}\quad \Omega\times \mathbb{R}^+,}\\
				\displaystyle{\eta^t_t=-\eta^t_s+u_t,\quad \mbox{in}\quad\Omega\times \mathbb{R}^+\times\mathbb{R}^+,}
			\end{array}\right.\end{eqnarray}
		with  boundary condition
		\begin{eqnarray}
			\left\{\begin{array}{l}
				\label{boundary-condition}
				u=\frac{\partial u}{\partial \nu}=0\quad \mbox{on}\quad \Gamma_0\times\mathbb{R}^+\quad\mbox{and}\quad \eta^t=\frac{\partial \eta^t}{\partial \nu}=0\quad \mbox{on}\quad \Gamma_0\times\mathbb{R}^+\times\mathbb{R}^+,\\
				\displaystyle{\kappa\Delta u+\int_{0}^{\infty}g(\tau)\Delta \eta^t(\tau)d\tau=0\quad\mbox{on}\quad \Gamma_1\times\mathbb{R}^+\times\mathbb{R}^+,}\\
				\displaystyle{\kappa\frac{\partial(\Delta u)}{\partial \nu}+\int_{0}^{\infty}g(\tau)\frac{\partial(\Delta \eta^t(\tau))}{\partial\nu}d\tau=f(u)+a(t)u_t\quad\mbox{on}\quad \Gamma_1\times\mathbb{R}^+\times\mathbb{R}^+,}
			\end{array}\right.
		\end{eqnarray}
		and initial conditions
		\begin{eqnarray}
			\left\{\begin{array}{l}
				\label{initial-condition0}
				u(x,0)=u_{0}(x),\quad u_t(x,0)=u_1(x),\quad x\in  \Omega,\\
				\eta^0(x,s)=\eta_0(x,s),\quad\mbox{in}\quad \Omega\times \mathbb{R}^+,
			\end{array}\right.
		\end{eqnarray}
		where the history is considered as an initial value
		$$\eta_0(x,s)=u_0(x,0)-u_0(x,-s),\ \text{in}\ \ \Omega\times\mathbb{R}^+.$$
		
		Thus, the analysis of the existence of a global attractor is well defined for the autonomous system \eqref{P}-\eqref{initial-condition0}. More specifically, the results established in this work are:
		\begin{itemize}
			\item[(a)] We establish the results of existence and uniqueness of global solutions for problem \eqref{P}-\eqref{initial-condition0} (Theorem \ref{theo-existence}) which ensures that the weak solutions define a dynamical system $(\mathcal{H},S_t)$ in the weak topology of \eqref{P}-\eqref{initial-condition0}.
			\item[(b)] We prove that the generated dynamical system $(\mathcal{H},S_t)$ is gradient (Proposition \ref{Prop-grad-syst}) and dissipative (Proposition \ref{absorbing-set}).
			\item[(c)] In the subcritical case $p< p^*$, we prove that the system is quasi-stable (Corollary \ref{Cor-quasi-stable}), which guarantees the existence of a compact global attractor $\mathfrak{A}$ that coincides with the unstable manifold $\mathcal{M}(\mathcal{N})$ emanating from the set of stationary points $\mathcal{N}$ of the dynamical system $(\mathcal{H},S_t)$. Moreover, the quasi-stability property also ensures that the attractor $\mathfrak{A}$ has properties such as finite dimension and smoothness.
			\item[(e)] Finally, for $f(u)\sim\lambda u$, we prove that there exists a space $\widetilde{\mathcal{H}}\supset\mathcal{H}$ such that $t\mapsto S_ty$ is H\"older continuous in $\widetilde{\mathcal{H}}$ for every $y\in \mathcal{B}$ ($\mathcal{B}$ absorbing set) (Proposition \ref{Prop-exp-attractor}) which together with the quasi-stability property guarantees the existence of a fractal exponential attractor $\mathfrak{A}_{\exp}$ whose dimension is finite in the space $\widetilde{\mathcal{H}}$.
		\end{itemize}
		
		The work is justified, as this is the first analysis of the asymptotic dynamics of the problem \eqref{P}-\eqref{initial-condition0} in the sense of the existence of global attractors and its properties. 

  \medskip
  It is also important to highlight that we improved, in relation to \cite{Cavalcanti-2002}, the exponent of the force term $f$. In \cite{Cavalcanti-2002} the results are obtained for $p$ such that
	$$0<p\le \frac{1}{n-2}\quad \mbox{if}\quad n\ge 3\quad \mbox{and}\quad p>0\quad \mbox{if}\quad n=1,2.$$
 In this work, the well-posedness and existence of a dissipative gradient system are obtained for $p$ such that
 $$0<p\le p^*:=\frac{3}{n-4}\quad \mbox{if}\quad n\ge 5\quad \mbox{and}\quad p>0\quad \mbox{if}\quad 1\le n\le 4.$$
 And, existence of attractor and its properties for the sub-critical case $p<p*$.
 
		\medskip
		It is well known that second-order evolution equations with viscoelastic effects have received the attention of several researchers in recent years and it is very difficult to quantify the number of papers in this regard. Therefore, in what we will present the pioneering work in relation to beam/plate equations with nonlinear boundary and the pioneering study in relation to nonlocal dissipation.
		
		\medskip
		Regarding the dissipative terms of the system \eqref{P}-\eqref{initial-condition0}, it is important to highlight that: although the problem has two dissipative terms, an internal one given by the memory term, and an external one, given by the nonlocal term at the boundary, the dissipation that governs the attraction rate is the internal one. On the other hand, the dissipation at the boundary does not have the property of monotonicity, that is, the nonlocal term is divided into a monotonic (good) part and a non-monotonic part that generates a (bad) term that needs control. And this control is well established by the good part of the nonlocal damping itself.
		
		\subsection{Pioneering work}
		Pioneering work in the study of modeling problems of vibrating beams in elastic bearings were the papers of Feireisl \cite{Feireisl} and Feckan \cite{Feckan}, where existence results of  time-periodic solutions of the following problem were studied
		\begin{eqnarray}\label{Problem0}
			\left\{\begin{array}{l}
				u_{tt}+u_{xxxx}=0,\quad \mbox{in}\quad [0,L]\times \mathbb{R}^+,\\
				u_{xx}(0,t)=u_{xx}(L,t)=0,\\
				u_{xxx}(0,t)=-f(u(0,t)),\\
				u_{xxx}(L,t)=f(u(L,t)).
			\end{array}\right.
		\end{eqnarray}
		Soon after, analogue problems were considered by other authors. For example, Grossinho and Ma \cite{Grossinho-ma} established results of symmetric solutions by using critical point theory and Fenchel-Legendre transform. Grossinho and Tersian \cite{Grossinho-tersian} studied a similar problem assuming a discontinuous elastic foundation. Ma \cite{Matofu-2000} studies the stationary problem related to problem \eqref{Problem0}, where necessary and sufficient conditions for the existence of solutions are established when $f$ and $g$ are monotone nonlinearities. In \cite{Matofu-2001,Matofu-narciso-pelicer} problem \eqref{Problem0} was considered under the presence of a nonlocal Kirchhoff term $M(\int_0^L|u_x|^2dx)u_{xx}$ and with monotone nonlinearities $f$ and $g$. Stability results and existence of global attractor were established. Here it is also important to cite the work by Ji and Lasiecka \cite{Ji-lasiecka} who studied the following plate equation with rotational forces
		\begin{eqnarray}
			\left\{\begin{array}{l}\label{problem1}
				u_{tt}-\gamma \Delta u_{tt}+\Delta^2u+f(u)=0\quad \mbox{in}\quad \Omega\times \mathbb{R}^+,\\
				u=0,\quad\Delta u=-g\left(\frac{\partial u_t}{\partial\nu}\right)\quad \mbox{in}\quad \partial \Omega\times \mathbb{R}^+,\\
				u(x,0)=u^0(x),\quad u_t(x,0)=u_1(x),\quad \mbox{in}\quad \Omega.
			\end{array}\right.
		\end{eqnarray}
		The authors show that \eqref{problem1} is uniformly stabilizable with uniform energy decay rates with respect to the parameter $\gamma>0$ sufficiently small. Furthermore, they prove that the solutions of the problem with rotational forces converge to the solutions of the Euler-Bernoulli plate, when $\gamma\rightarrow 0$. Here, it is also important to highlight the work regarding nonlinear boundary conditions for wave equations \cite{lasiecka-tataru,Irena-Ong}.
		\medskip
		On the other hand, for pioneering works concerning viscoelastic beam/plate equations, we refer the following works: Leugering \cite{Leugering} proved, through harmonic analysis arguments, the exact controllability for a square viscoelastic plate with long memory. Lagnese \cite{Lagnese-1991} studied the viscoelastic plate equation subject to non-linear feedback and showed that the solution of the corresponding equation decays to zero as time goes to infinity. Rivera {\it et al.} \cite{Rivera} who studied the Kirchhoff linear viscoelastic plate and proved that the first and second associated energies decay exponentially (or polynomially) when the memory core also decays exponentially (or polynomially).
		
		\medskip
		Now, the presence of a nonlocal dissipation of the Kirchhoff type appears in the model
		\begin{align}\label{model-lange-menzala}u_{tt} + \Delta^2 u + a(t)u_t = 0 \quad\mbox{in}\quad \mathbb{R}^n\times \mathbb{R}^+,
		\end{align}
		proposed by Lange and Menzala \cite{Lange-menzala} as a type of nonlocal beam equations. By using Fourier transform and exploring the regularity of initial data, they proved the existence of global classical solutions and algebraic energy decay. Cavalcanti {\it et al.} \cite{Cavalcanti-2001} studied equation \eqref{model-lange-menzala} in a viscoelastic formulation by adding a memory term of fourth order
		\begin{align*}
			u_{tt}+\Delta^2 u-\int_0^tg(t-s)\Delta^2u(s)ds+a(t)u_t=0
			\quad\mbox{in}\quad \Omega\times \mathbb{R}^+,\end{align*}
		where $\Omega\subset \mathbb{R}^n$ is a bounded or unbounded open set.
		They showed global existence and energy decay. Later, Cavalcanti in \cite{Cavalcanti-2002} makes the first study of this nonlocal class of dissipation in the proposed model (*).

		\section{Well-posedness}
		\subsection{Notations and assumptions}

		\medskip
		Now we will define some notations that will be used throughout the work. For the standard $L^p(\Omega)$ space, we denote
		$$\|u\|^p_p=\int_{\Omega}|u(x)|^pdx,\quad \|u\|^p_{p,\Gamma}=\int_{\Gamma}|u(x)|^pd\Gamma,\ p\geq1,$$ and in the particular case $p=2,$
		$$\left(u,v\right)=\int_{\Omega}u(x)v(x)dx,\quad \|u\|^2=\int_{\Omega}|u(x)|^2dx,\quad \mbox{and}\quad \|u\|^2_{\Gamma}=\int_{\Gamma}|u(x)|^2d\Gamma.$$
		We denote by $L^2_{g}(\mathbb{R}^+;X)$ the Hilbert space of $X$-valued functions on $\mathbb{R}^+$, endowed with the inner product and norm, respectively,
		$$(\varphi,\psi)_{g,X}=\int_0^{\infty}g(\tau)\left(\varphi(\tau),\psi(\tau)\right)_Xd\tau\quad \mbox{and}\quad \|\varphi\|^2_{L^2_{g}(\mathbb{R}^+;X)}=\int_0^{\infty}g(\tau)\|\varphi(\tau)\|_X^2d\tau.$$
		In the particular case $X=L^2(\Omega)$, we denoted by
		$$(\varphi,\psi)_{g,L^2(\Omega)}:=(\varphi,\psi)_{g}=\int_0^{\infty}g(\tau)\left(\varphi(\tau),\psi(\tau)\right)d\tau.$$
		With respect to the Sobolev spaces which will be used, let
		\begin{eqnarray*}V&:=&\{v\in H^2(\Omega);v=\frac{\partial v}{\partial \nu}=0\;\;\mbox{on}\;\; \Gamma_0\},\\
			H&:=&\{v\in H^4(\Omega)\cap V;\,\Delta v=0\;\mbox{on}\; \Gamma_1\},\end{eqnarray*}
		equipped with inner products,
		\begin{eqnarray*}(u,v)_V=\left(\Delta u,\Delta v\right),\quad (u,v)_H=(u,v)_V+\left(\Delta^2 u,\Delta^2 v\right),\end{eqnarray*}
		respectively.
		
		Motivated by the boundary condition \eqref{boundary-condition} we assume, for regular solutions,  that
		\begin{align*}
			z_0=(u_0,u_1,\eta_0)\in H\times H\times L^2_g(\mathbb{R}^+;H)
		\end{align*}
		verifying the compatibility conditions
		\begin{align}\label{comp-cond}
			\frac{\partial{(\Delta u_{0})}}{\partial\nu}+\int_0^{\infty}g(\tau)\frac{\partial( \Delta \eta_{0}(\tau))}{\partial\nu}d\tau =f(u_{0})+M(\|u_{0}\|^2_{\Gamma_{1}})u_{1}\ \ \text{on}\ \ \Gamma_1.
		\end{align}
		Let
		\begin{align*}
			\mathcal{H}_1:=\left\{z_0=(u_0, u_{1},\eta_0)\in H\times H\times L^2_g(\mathbb{R}^+;H);\ z_0\;\;\mbox{satisfies condition}\;\; \eqref{comp-cond}
			\right\}.
		\end{align*}
		Then, the natural choice of the phase space is the following one:
		$$\mathcal{H}:= \overline{\mathcal{H}_1}^{V\times L^{2}(\Omega)\times L^2_{g}(\mathbb{R}^+;V)},$$
		equipped with the adapted norm
		$$||z||^2_{\mathcal{H}}\equiv ||z||^2_{V\times L^2(\Omega)\times L^{2}_g(\mathbb{R}^+;V)}=\kappa\|\Delta u\|^2+\|v\|^2+\|\eta^t\|^2_{L^2_{g}(\mathbb{R}^+;V)},\quad z=(u,v,\eta^t).$$

		We state the general hypothesis in the following.
		\begin{Assumption}\label{A1}\rm
			\begin{itemize}
				\item
				For nonlocal damping term
				$
				a(t):=M(\|u(t)\|^2_{\Gamma_1}),
				$
				we assume that $M\in C^1(\mathbb{R}^+)$ is a non-negative function such that
				\begin{align}\label{hyp_M}M(s)\ge m_0>0,\quad \forall s\in \mathbb{R}^+.\end{align}
			\item With respect to the nonlinear term $f$, we assume that $f\in C^{1}(\mathbb{R})$, and
			\begin{eqnarray}
				&&|f'(u)|\leq C_{f'}(1+|u|^p),\ u\in\mathbb{R},\label{diff}\\
				&&-C_{f}-\frac{c_f}{2}|u|^2\leq F(u)=\int_{0}^{u}f(s)ds\leq f(u)u+\frac{c_f}{2}|u|^2, \ u\in\mathbb{R},\label{diff2}
			\end{eqnarray}
			for some positive constants $C_{f'}$, $C_{f}$, $c_f\in[0,\min\{\frac{\kappa}{\lambda_0},\frac{\kappa}{\lambda_1}\})$ and growth exponent $p$ such that
			\begin{align*}
				 0<p<\frac{3}{n-4}\ \ \text{if}\ n\geq 5\ \ \text{and}\ \ p>0\ \text{if}\ 1\leq n\leq 4.
			\end{align*}
			\item For the kernel $g$, we assume that $g:\mathbb{R}_{+}\rightarrow\mathbb{R}_{+}$ is a bounded $C^2$ function
			and there exist positive constants $\alpha_1$, $\alpha_2$ and $\alpha_3$ satisfying
			\begin{align}\label{kernel_g'}
				-\alpha_1 g(t)&\leq g'(t)\leq-\alpha_2 g(t),\\
				0&\leq g''(t)\leq\alpha_3 g(t).
			\end{align}
			\item $\varphi(x,s):=u_0(x,-s)$ is such that
			\begin{align}\label{assumption-varphi}
				\varphi,\varphi'\in L^2_g(\mathbb{R}^+;V).
			\end{align}
		\end{itemize}
	\end{Assumption}
	\begin{remark}
		It is easy to check that $g$ is decreasing, with $\lim\limits_{t\rightarrow\infty}g(t)=0$ and $g(0)>0$.
	\end{remark}

		\begin{remark}
			Let $u\in H^2(\Omega)$. By the Sobolev  embeddings, (see Lemma \ref{5.5})
			\begin{eqnarray}\label{imbeddings}\left\{\begin{array}{rcl} H^2(\Omega)\hookrightarrow L^{\frac{2n}{n-4}}(\Omega),\;\; &H^2(\Omega)\hookrightarrow H^{3/2}(\Gamma) \hookrightarrow L^{\frac{2(n-1)}{n-4}}(\Gamma),\quad &n\geq5;\\
					H^2(\Omega)\hookrightarrow L^{p}(\Omega),\;\; &H^2(\Omega)\hookrightarrow H^{3/2}(\Gamma) \hookrightarrow L^{p}(\Gamma),\quad &\forall p\geq1, \ 1\le n\le 4.
				\end{array}\right.
			\end{eqnarray}
	\end{remark}
	By Sobolev imbedding theorem and trace-Sobolev imbedding theorem, there exist positive constants $\lambda_i$,  $i=0,1,2,3$, such that
	\begin{align*}
		&\|u\|^2\leq\lambda_0\|\Delta u\|^2;\ \  \|u\|^2_{\Gamma_1}\leq\lambda_1\|\Delta u\|^2;\ \  \|\eta^t\|^2\leq\lambda_2\|\Delta \eta^t\|^2;\ \ \|\eta^t\|^2_{\Gamma_1}\leq\lambda_3\|\Delta \eta^t\|^2.
	\end{align*}
Let $z(t)=(u(t),u_t(t),\eta^t)$. Under the above notations, the energy functional $E$ associated with the problem \eqref{P}-\eqref{initial-condition0} is denoted by
\begin{align}\label{energy-functional}
	E(z(t))=\frac{1}{2}||z(t)||^2_{\mathcal{H}}+\int_{\Gamma_1}F(u)d\Gamma.
\end{align}

\subsection{Global solutions}
We begin this section by presenting the solution definitions that we will use in this work.
\begin{definition}\rm \label{Definition-solution} A function $z=(u(t),u_t(t),\eta^{t})\in C([0,T];\mathcal{H})$ with the property $z(0)=z_0=(u_0,u_1,\eta_0)$, is said to be
	\begin{enumerate}
		\item[{\bf(R)}] {\it \underline{regular solution}} to problem \eqref{P}-\eqref{initial-condition0} on the interval $[0,T]$, iff
		\begin{itemize}
			\item $z=(u,u_t,\eta^t)\in W^{1,\infty}(0,T;\mathcal{H}),\;\; (u_t,\eta^t)\in W^{1,2}\left(0,T,L^2(\Gamma_1)\times L^2_g(\mathbb{R}^+;V)\right);$
			\item $\Delta^2 u(t)+\int_{0}^{\infty}g(\tau)\Delta^2\eta^t(\tau)d\tau \in L^2(\Omega)$ and $-\eta^t_s+u_t(t)\in L^2_g(\mathbb{R}^+;V)$ for almost all $t\in[0,T]$;
			\item Eq. $\eqref{P}_1$ is satisfied in $L^2(\Omega)$ and Eq. $\eqref{P}_2$ is satisfied in $L^2_g(\mathbb{R}^+;V)$ for almost all $t\in[0,T]$;
		\end{itemize}
		\item[{\bf(G)}] {\it \underline{generalized solution}} to problem \eqref{P}-\eqref{initial-condition0} on the interval $[0,T]$, iff there exists a sequence of strong solutions $\{z^n(t)\}=\{(u^n(t),u^n_t(t),\eta^{t,n})\}$ to problem \eqref{P}-\eqref{initial-condition0} with initial data $(u_{0n},u_{1n},\eta^{n}_0)$ instead of $(u_0,u_1,\eta_0)$ such that
		\begin{align*}\lim_{n\rightarrow \infty}\max_{\tau\in [0,t]}||z^n(\tau)-z(\tau)||_{\mathcal{H}}=0.
		\end{align*}
		\item[{\bf(W)}] {\it \underline{weak solution}} to problem \eqref{P}-\eqref{initial-condition0} on the interval $[0,T]$, if Eq. \eqref{P} is satisfied in the sense of distributions. The latter means that
		\begin{eqnarray}\left\{\begin{array}{l}\label{weak-solution}
				\displaystyle{(u_t(t),\omega)=(u_1,\omega)+\kappa\int_0^t(\Delta u,\Delta\omega)ds-\int_0^t\int_0^{\infty}g(\tau)\left(\Delta \eta^{s}(\tau),\Delta \omega\right)d\tau ds}\\
				\displaystyle{+\int_0^t(f(u(s))+a(s)u_s,\omega)_{\Gamma_1}ds=0,}\\
				\displaystyle{\int_0^t(\eta^s_t+\eta^s_{\tau},\theta)_{g,V}ds=\int_0^t(u_t,\theta)_{g,V}ds,}
			\end{array}\right.\end{eqnarray}
		holds for every $\omega\in V$, $\theta\in L^{2}_{g}(\mathbb{R}^{+};V)$, and for almost all $t\in [0,T]$.
	\end{enumerate}
\end{definition}
Under the previous definitions of solutions and Assumption \ref{A1}, we can state the theorem on the existence of global solutions for the problem \eqref{P}-\eqref{initial-condition0} as follows.
\begin{theorem}\label{theo-existence} Let $T>0$ be arbitrary.
	Under Assumption \ref{A1} the following statements hold.
	\begin{itemize}
		\item[{\bf I.}] {\bf [Regular solution]} For every $z(0)=z_0=(u_0,u_1,\eta_0)\in \mathcal{H}_1$, there exists unique regular solution $z=(u,u_t,\eta^{t})$ to problem \eqref{P}-\eqref{initial-condition0} satisfying Definition \ref{Definition-solution}-{\bf(R)}.
		\item[{\bf II.}] {\bf [Generalized (Weak solution)]} For every $z_0\in \mathcal{H}$ there exists unique generalized (weak) solution $z$ to problem \eqref{P}-\eqref{initial-condition0} satisfying Definition \ref{Definition-solution}-{\bf(G)} (Definition \ref{Definition-solution}-{\bf(W)}).
	\end{itemize}
\end{theorem}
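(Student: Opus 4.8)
My plan is to prove Part~{\bf I} by a Faedo--Galerkin scheme together with two levels of a priori estimates, and then obtain Part~{\bf II} by a density argument. For the approximate problem I would fix a Galerkin basis $\{w_j\}$ of the regular space $H$ together with a basis of the memory space $L^2_g(\mathbb{R}^+;V)$ adapted to the Dafermos transport operator $\eta\mapsto-\eta_s$ (whose natural domain encodes the boundary value $\eta^t(0)=0$), seek finite-dimensional approximations $(u^m,\eta^{t,m})$ solving the projected version of \eqref{P}, and solve the resulting ODE system locally in time. The first a priori bound comes from testing $\eqref{P}_1$ with $u^m_t$ and the history equation $\eqref{P}_2$ with $\eta^{t,m}$ in $(\cdot,\cdot)_{g,V}$. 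After integrating the biharmonic terms by parts twice and invoking the boundary conditions \eqref{boundary-condition}, the normal-derivative boundary contributions cancel against one another while the remaining $\Gamma_1$-terms reproduce exactly $(f(u^m)+a(t)u^m_t,u^m_t)_{\Gamma_1}$, and the two memory cross terms $(\eta^{t,m},u^m_t)_{g,V}$ cancel by symmetry. This yields the energy identity
\[
\frac{d}{dt}E(z^m(t))=\frac12\int_0^\infty g'(\tau)\,\|\Delta\eta^{t,m}(\tau)\|^2\,d\tau-a(t)\,\|u^m_t\|^2_{\Gamma_1},
\]
whose right-hand side is nonpositive by \eqref{kernel_g'} and \eqref{hyp_M}. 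Since \eqref{diff2} with $c_f<\kappa/\lambda_1$ makes $E$ coercive, namely $E(z)\ge c\|z\|^2_{\mathcal H}-C$, the identity gives the uniform bound $\|z^m(t)\|_{\mathcal H}\le C(E(z_0)+1)$ on $[0,T]$.

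To upgrade to regular solutions I would differentiate the approximate system in $t$ and run the same energy computation on $(u^m_t,u^m_{tt},\eta^{t,m}_t)$, obtaining a bound in $W^{1,\infty}(0,T;\mathcal H)$ once the initial accelerations are controlled; this is precisely where the compatibility condition \eqref{comp-cond} enters, as it allows $u_{tt}(0)$ and the boundary terms at $t=0$ to be estimated from the data $z_0\in\mathcal H_1$. From the differentiated bound and the equation itself one then recovers $\Delta^2u+\int_0^\infty g\,\Delta^2\eta^t\,d\tau\in L^2(\Omega)$ and $-\eta^t_s+u_t\in L^2_g(\mathbb R^+;V)$, which gives the regularity required in Definition \ref{Definition-solution}-{\bf(R)}. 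I expect the main obstacle here to be the boundary nonlinearity at the critical growth $p\le p^*$: both $f(u^m)$ and $f'(u^m)u^m_t$ must be controlled on $\Gamma_1$ through the trace embeddings \eqref{imbeddings} and \eqref{diff}, while the time derivative of the nonlocal coefficient $a'(t)=2M'(\|u^m\|^2_{\Gamma_1})(u^m,u^m_t)_{\Gamma_1}$ produces the non-monotone (bad) contribution flagged in the introduction, which has to be absorbed into the good dissipative term $a(t)\|u^m_t\|^2_{\Gamma_1}\ge m_0\|u^m_t\|^2_{\Gamma_1}$ by Young's inequality and then closed by Gronwall.

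Passing to the limit $m\to\infty$ uses the uniform bounds to extract weak-$*$ limits in the energy spaces; only the nonlinear and nonlocal terms require care. Aubin--Lions compactness, applied to the bounds on $u^m$ in $L^\infty(0,T;V)$ and on $u^m_t$ in $L^\infty(0,T;L^2(\Omega))$, gives strong convergence of $u^m$ and in particular of its $\Gamma_1$-trace, whence $f(u^m)\to f(u)$ and, by continuity of $M$, $a^m(t)=M(\|u^m\|^2_{\Gamma_1})\to a(t)$; this lets me identify the limits in the boundary integrals and conclude that $z$ solves \eqref{P}--\eqref{initial-condition0}.

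Uniqueness and Part~{\bf II} both rest on a single continuous-dependence estimate. For two solutions I would test the difference equation for $w=u^1-u^2$, $\xi=\eta^1-\eta^2$ with $(w_t,\xi)$; the good boundary term contributes $a^1(t)\|w_t\|^2_{\Gamma_1}\ge0$, while $f(u^1)-f(u^2)$ and the remaining nonlocal difference $(a^1-a^2)u^2_t$ are Lipschitz-type terms handled via \eqref{diff}, $M\in C^1$, and the embeddings. The delicate point is exactly this: the factor $a^1-a^2$ is $O(\|w\|_{\Gamma_1})$ and multiplies $u^2_t$, so it must be split off and absorbed into $a^1(t)\|w_t\|^2_{\Gamma_1}$, leaving a remainder controlled by $\|w\|^2_V\,\|u^2_t\|^2_{\Gamma_1}$, which is integrable in time for regular solutions; Gronwall then yields $\|z^1(t)-z^2(t)\|_{\mathcal H}\le C(T)\,\|z^1_0-z^2_0\|_{\mathcal H}$, and taking $z^1_0=z^2_0$ gives uniqueness. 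For $z_0\in\mathcal H$, since $\mathcal H=\overline{\mathcal H_1}$ I approximate by $z_0^n\in\mathcal H_1$; the continuous-dependence bound makes the regular solutions $z^n$ a Cauchy sequence in $C([0,T];\mathcal H)$, whose limit is the unique generalized solution of Definition \ref{Definition-solution}-{\bf(G)}. Finally, each $z^n$ satisfies \eqref{weak-solution}, and passing to the limit (strong $\mathcal H$-convergence for the linear terms, trace convergence for $f$ and $a$) shows the generalized solution is also the weak solution, completing Part~{\bf II}. I anticipate that controlling the non-monotone nonlocal boundary term by the good dissipation will be the crux of the entire argument.
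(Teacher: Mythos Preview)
Your proposal is correct and follows the same strategy as the paper: Galerkin approximation, first energy estimate, a second time-differentiated estimate with the compatibility condition \eqref{comp-cond} controlling $\|u_{tt}(0)\|$, Aubin--Lions compactness to pass to the limit, and a continuous-dependence inequality driving both uniqueness and the density construction of generalized solutions in Part~{\bf II}. The only technical difference is that the paper obtains the second estimate via finite differences (comparing the approximate system at $t+\epsilon$ and $t$, then dividing by $\epsilon^2$ and letting $\epsilon\to0$) rather than by direct differentiation; note also that in the differentiated identity the good dissipative boundary term absorbing the non-monotone pieces is $a(t)\|u^m_{tt}\|^2_{\Gamma_1}$, not $a(t)\|u^m_t\|^2_{\Gamma_1}$ as you wrote.
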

\subsubsection*{Proof of the Theorem \ref{theo-existence}-${\bf I.}$}
The existence of a regular global solution of problem \eqref{P}-\eqref{initial-condition0} is classical, through the Galerkin method combined with Aubin-Lions compactness arguments. Indeed, let $\{\omega_j\}_{j\in\mathbb{N}}$
be the basis of $V$, be the orthonormal basis of $L^2(\Omega)$. Define $\theta_j=h_j\frac{\omega_j}{\|\omega_j\|_{V}},j=1,2,\cdots$, where $\{h_{j}\}_{j\in\mathbb{N}}$ is an orthonormal basis of $L^{2}_{g}(\mathbb{R}^+)\cap C_{0}^{\infty}(\mathbb{R}^+)$, then $\{\theta_{j}\}_{j\in\mathbb{N}}$ is a smooth orthonormal basis of $L^{2}_{g}(\mathbb{R}^+;V)$. Define $V_m=Span\{\omega_1,\omega_2,\cdots,\omega_m\}$ and
$Q_m=Span\{\theta_1,\theta_2,\cdots,\theta_m\}$.

Then by theory of ordinary differential equation, we know that
$u^m(t)=\sum\limits_{j=1}^{m}a_{mj}(t)\omega_{j}$ and $\eta^{t,m}(s)=\sum\limits_{j=1}^{m}b_{mj}(t)\theta_{j}(s)$ is the local solution of the following approximation system
\begin{eqnarray}\left\{\begin{array}{l}\label{aprox-equat}
		\displaystyle{
			<u^m_{tt},\omega_j>+\kappa(\Delta u^m(t),\Delta \omega_j)+(\eta^{t,m},\omega_j)_{g,V}+(f(u^m)+a(t)u^m_t,\omega_j)_{\Gamma_1}=0,}\\
		\displaystyle{(\eta^{t,m}_t+\eta^{t,m}_s,\theta_j)_{g,V}=(u^{m}_t,\theta_j)_{g,V},}
	\end{array}\right.\end{eqnarray}
for all $u^m(0)=u_0^{m}$, $u^m_t(0)=u_1^{m}$ and $\eta^{0,m}=\eta_0^m$, where $u_0^{m}\rightarrow u_0$ in $V$, $u_1^{m}\rightarrow u_1$ in $L^2(\Omega)$ and $\eta_0^m\rightarrow \eta_0$ in $L^2_{g}(\mathbb{R}^+;V).$
Next, our first a priori estimate allows us to extend the local solution to the entire interval $[0,T )$. The following estimates in this section will be used to prove the existence of a regular solution to \eqref{P}-\eqref{initial-condition0}. Without loss of generality, we will use the same parameter $C$ to denote different positive constants that will appear in the calculations, but we will also specify their dependence on time and initial data wherever necessary.
\subsubsection*{\it \underline{The first estimate.}}
We first consider the approximate system \eqref{aprox-equat} with
\begin{align}\label{convergence-initial}
	z^m(0)=(u^m_0 ,u^m_1,\eta^{m}_0)\rightarrow (u_0, u_1,\eta_0)=z_0\quad \mbox{strongly in}\quad \mathcal{H}.
\end{align}
Considering $\omega_j=u^m_t$ in $\eqref{aprox-equat}_1$ and $\theta_j=\eta^{t,m}$ in $\eqref{aprox-equat}_2$ we infer
\begin{eqnarray}\label{first-001}
	\frac{d}{dt}E(z^m(t))+a(t)\|u^m_t(t)\|^2_{\Gamma_1}+\left(\eta^{t,m}_s,\eta^{t,m}\right)_{g,V}=0,
\end{eqnarray}
where $E(z^m(t))$ is the energy functional \eqref{energy-functional} for Galerkin solutions $z^m$. From Assumption \eqref{hyp_M}, we have
\begin{align}\label{first-0}
	a(t)\|u^m_t(t)\|^2_{\Gamma_1}\ge m_0\|u^m_t(t)\|^2_{\Gamma_1}.
\end{align}
And by integration by parts and using Assumption \eqref{kernel_g'}, we obtain
\begin{align}\label{first-01}
	\left(\eta^{t,m}_s,\eta^{t,m}\right))_{g,V}=-\frac{1}{2}\int_0^{\infty}g'(\tau)\|\Delta \eta^{t,m}(\tau)\|^2d\tau\ge \frac{\alpha_2}{2}\|\eta^t\|^2_{L^2_g(\mathbb{R}^+;V)}.
\end{align}
Now, integrating \eqref{first-001} over $(0,t)$ and using \eqref{first-0}-\eqref{first-01}, we deduce
\begin{eqnarray}\label{first-1}
	E(z^m(t))+m_0\int_0^t\|u^m_t(\tau)\|^2_{\Gamma_1}d\tau+\frac{\alpha_2}{2}\int_0^{t}\|\eta^{\tau,m}\|^2_{L^2_g(\mathbb{R}^+;V)}d\tau\le E(z^m(0)).
\end{eqnarray}
From Assumption \eqref{diff2} and embedding $V\hookrightarrow L^2(\Gamma_1)$, we get
\begin{align}\label{first-2}
	\int_{\Gamma_1}F(u^m)d\Gamma\ge -C_fmeas(\Gamma_1)-\frac{c_f}{2}\|u(t)\|^2_{\Gamma_1}\ge -C_fmeas(\Gamma_1)-\frac{c_f\lambda_1}{2}\|\Delta u(t)\|^2.
\end{align}
Using that $c_f<\frac{\kappa}{\lambda_1}$ and defining $\varrho=1-\frac{c_f\lambda_1}{\kappa}$, we obtain from \eqref{first-2} that
\begin{eqnarray}\label{first-3}
	E(z^m(t))&\ge& -C_fmeas(\Gamma_1)+\frac{1}{2}\|u^m_t(t)\|^2+\varrho\frac{\kappa}{2}\|\Delta u^m(t)\|^2+\frac{1}{2}\|\eta^t\|^2_{L^2_g(\mathbb{R}^+;V)}\nonumber\\
	&\ge& -C_fmeas(\Gamma_1)+\frac{\varrho}{2}||z^m(t)||^2_{\mathcal{H}}.
\end{eqnarray}
Combining \eqref{first-1} and \eqref{first-3}, and using convergence \eqref{convergence-initial}, we get
\begin{eqnarray}\label{first-4}
	&&||z^m(t)||^2_{\mathcal{H}}+\frac{2m_0}{\varrho}\int_0^t\|u^m_t(\tau)\|^2_{\Gamma_1}d\tau+\frac{\alpha_2}{\varrho}\int_0^{t}\|\eta^{\tau,m}\|^2_{L^2_g(\mathbb{R}^+;V)}d\tau\nonumber\\
	&&\quad\le \frac{2(C_fmeas(\Gamma_1)+E(z^m(0)))}{\varrho}\le C,\quad \forall t\in (0,t_m),
\end{eqnarray}
where $C:=C(||z_0||_{\mathcal{H}}) > 0$ is a constant depending on weak initial data. Moreover, estimate \eqref{first-4} allows us to extend the local solution of the approximate problem to whole interval $[0,T),$ for any given $T>0$. Which implies that
\begin{align}
	(z^m)\quad &\mbox{is bounded in}\quad L^{\infty}(0,T;\mathcal{H}),\label{bounded-1}\\
	(u^m_t)\quad &\mbox{is bounded in}\quad L^{2}(0,T;L^2(\Gamma_1)),\label{bounded-2}\\
	(\eta^{t,m})\quad &\mbox{is bounded in}\quad L^{2}(0,T;L^2_g(\mathbb{R}^+;V)).\label{bounded-3}
\end{align}
\subsubsection*{\it \underline{The second estimate.}}
Now let's consider the approximate problem \eqref{aprox-equat} with initial conditions
\begin{align}
	\label{conv-initial-condition-2}
	z^m(0)=(u^m_0 ,u^m_1,\eta^{m}_0)\rightarrow (u_0, u_1,\eta_0)=z_0\quad \mbox{strongly in}\quad \mathcal{H}_1.
\end{align}
{\it In what follows, in the development of the calculations for simplicity we will omit the index $m.$} Let us fix $t,\epsilon$ such that $\epsilon<T-t$. Taking the difference of \eqref{aprox-equat} with $t=t+\epsilon$ and $t=t$ and replacing $\omega_j=u_t(t+\epsilon)-u_t(t)$ in $\eqref{aprox-equat}_1$ and $\theta_j=\eta^{t+\epsilon}-\eta^{t}$ in $\eqref{aprox-equat}_2$, we obtain
\begin{eqnarray}
	&&  \label{est3-1}\frac{1}{2}\frac{d}{dt}\mathcal{E}_{\epsilon}(t)+\underbrace{\left(\eta^{t+\epsilon}_s-\eta^{t}_s,\eta^{t+\epsilon}-\eta^{t}\right)_{g,V}}_{J_1}+\underbrace{M(\|u(t+\epsilon)\|^2_{\Gamma_1})\|u_t(t+\epsilon)-u_t(t)\|^2_{\Gamma_1}}_{J_2}\nonumber\\
	&&=-\underbrace{\left[\,M(\|u(t+\epsilon)\|^2_{\Gamma_1})-M(\|u(t)\|^2_{\Gamma_1})\,\right]\int_{\Gamma_1}u_t(t)\left[u_t(t+\epsilon)-u_t(t)\right]d\Gamma}_{J_3}\nonumber\\
	&&\quad-\underbrace{\int_{\Gamma_1}\left[\,f(u(t+\epsilon)-f(u(t))\,\right]\left[u_t(t+\epsilon)-u_t(t)\right]d\Gamma}_{J_4},
\end{eqnarray}
where
\begin{align}\label{definitionE}\mathcal{E}_{\epsilon}(t):=\|u_t(t+\epsilon)-u_t(t)\|^2+\kappa\|\Delta u(t+\epsilon)-\Delta u(t)\|^2+\|\eta^{t+\epsilon}-\eta^{t}\|^2_{L^2_g(\mathbb{R}+;V)}.\end{align}
Our objective now is to estimate the terms on the right-hand side of \eqref{est3-1}, in order to apply Gronwall's lemma. But first, note that terms $J_1$ and $J_2$ on the left side of \eqref{est3-1} are positive and can be estimated lower as follows: using \eqref{hyp_M}, we have
\begin{align}
	M(\|u(t+\epsilon)\|^2_{\Gamma_1})\|u_t(t+\epsilon)-u_t(t)\|^2_{\Gamma_1}\ge m_0\|u_t(t+\epsilon)-u_t(t)\|^2_{\Gamma_1}.
\end{align}
And, from Assumption \eqref{kernel_g'}, we get
\begin{eqnarray*}
	\left(\eta^{t+\epsilon}_s-\eta^{t}_s,\eta^{t+\epsilon}-\eta^{t}\right)_{g,V}&=&-\frac{1}{2}\int_0^{\infty}g'(\tau)\|\Delta \left(\eta^{t+\epsilon}(\tau)-\eta^{t}(\tau)\right)\|^2d\tau\\
	&\ge &\frac{\alpha_2}{2}\int_0^{\infty}g(\tau)\|\Delta \left(\eta^{t+\epsilon}(\tau)-\eta^{t}(\tau)\right)\|^2d\tau\\
	&=&\frac{\alpha_2}{2}\|\eta^{t+\epsilon}-\eta^{t}\|^2_{L^2_g(\mathbb{R}^+;V)}.
\end{eqnarray*}
Next, the terms $J_3$ and $J_4$ on the right side of \eqref{asympt-12} can be estimated as follows: using that $M\in C^1(\mathbb{R}^+)$, Mean Value Theorem, and embedding $V\hookrightarrow L^{2}(\Gamma_1)$, we get
\begin{eqnarray*}
	M(\|u(t+\epsilon)\|^2_{\Gamma_1})-M(\|u(t)\|^2_{\Gamma_1})&\le &C\left|\|u(t+\epsilon)\|^2_{\Gamma_1}-\|u(t)\|^2_{\Gamma_1}\right|\\
	&\le& C\|u(t+\epsilon)-u(t)\|_{\Gamma_1}\\
	&\le & C\|\Delta u(t+\epsilon)-\Delta u(t)\|.
\end{eqnarray*}
Hence, from the first estimate, Young's inequality, and \eqref{definitionE}, we have
\begin{eqnarray*}
	J_3&\le& C\|\Delta u(t+\epsilon)-\Delta u(t)\|\|u_t(t+\epsilon)-u_t(t)\|_{\Gamma_1}\\
	&\le& C\|\Delta u(t+\epsilon)-\Delta u(t)\|^2+\frac{m_0}{4}\|u_t(t+\epsilon)-u_t(t)\|^2_{\Gamma_1}\\
	&\le &C\mathcal{E}_{\epsilon}(t)+\frac{m_0}{4}\|u_t(t+\epsilon)-u_t(t)\|^2_{\Gamma_1}.
\end{eqnarray*}
Finally, from Assumption \eqref{diff}, H\"older inequality with $\frac{p}{2(p+1)}+\frac{1}{2(p+1)}+\frac{1}{2}=1$, embedding $V\hookrightarrow L^{2(p+1)}(\Gamma_1)$, and Young's inequality, we have
\begin{eqnarray*}
	J_4&\le& C_{f'}\int_{\Gamma_1}\left[1+(|u(t+\epsilon)|+|u(t)|)^{p}\right]|u(t+\epsilon)-u(t)||u_t(t+\epsilon)-u_t(t)|d\Gamma\\
	&\le& C\left[1+\|u(t+\epsilon)\|^{p}_{2(p+1),\Gamma_1}+\|u(t)\|^p_{2(p+1),\Gamma_1}\right]\|u(t+\epsilon)-u(t)\|_{2(p+1),\Gamma_1}\|u_t(t+\epsilon)-u(t)\|_{\Gamma_1}\\
	&\le & C\|\Delta u(t+\epsilon)-\Delta u(t)\|\|u_t(t+\epsilon)-u(t)\|_{\Gamma_1}\\
	&\le &C\mathcal{E}_{\epsilon}(t)+\frac{m_0}{4}\|u_t(t+\epsilon)-u_t(t)\|^2_{\Gamma_1}.
\end{eqnarray*}
Returning to \eqref{asympt-12}, we get
\begin{align}	\label{est3-5}
	\frac{d}{dt}\mathcal{E}_{\epsilon}(t)+m_0\|u_t(t+\epsilon)-u_t(t)\|^2_{\Gamma_1}+\alpha_2\|\eta^{t+\epsilon}-\eta^{t}\|^2_{L^2_g(\mathbb{R}^+;V)}\le C\mathcal{E}_{\epsilon}(t),\quad \forall t\in[0,T).
\end{align}
Applying Gronwall's lemma in \eqref{est3-5} we have
\begin{align}
	\label{est3-6} \mathcal{E}_{\epsilon}(t)+\int_0^t\left[\,\|u_t(s+\epsilon)-u_t(s)\|^2_{\Gamma_1}+\|\eta^{s+\epsilon}-\eta^{s}\|^2_{L^2_g(\mathbb{R}^+;V)}\,\right]ds\le \frac{e^{Ct}\mathcal{E}_{\varepsilon}(0)}{\min\{1,m_0,\alpha_2\}},
\end{align}
for all $t\in[0,T)$.
{\it From now on we return with the index $m$.} \\
Dividing \eqref{est3-6} by $\epsilon^2$ and letting $\epsilon\rightarrow 0$, we obtain
\begin{align}\label{est3-7}
	||z_t(t)||^2_{\mathcal{H}}+\int_0^t\left[\|u_{tt}(s)\|^2_{\Gamma_1}+\|\eta^{s}_t\|^2_{L^2_g(\mathbb{R}^+;V)}\right]ds\le \frac{e^{Ct}||z^m_t(0)||^2_{\mathcal{H}}}{\min\{1,m_0,\alpha_2\}},
\end{align}
$\forall m\in \mathbb{N}$ and $\forall t\in [0,T)$.
Since $$||z^m_t(0)||^2_{\mathcal{H}}=\|u^m_{tt}(0)\|^2+\|\Delta u^m_t(0)\|+\|\eta^{0,m}_t\|^2_{L^2_g(\mathbb{R}^{+};V)},$$
using that $z_0\in \mathcal{H}_1$, there exists $C>0$ such that
\begin{align*}
	\|\Delta u_{1m}\|^2+\|\eta^{0,m}_t\|^2_{L^2_g(\mathbb{R}^+;V)}\le C,\quad \forall m\in \mathbb{N},\,\,\forall t\in [0,T).
\end{align*}
Thus, it remains to obtain an estimate for $\|u^m_{tt}(0)\|$. Indeed, considering $t=0$ in \eqref{aprox-equat} and substituting $\omega_j=u^m_{tt}(0)$ in $\eqref{aprox-equat}_1$, it results that
\begin{eqnarray}\label{utt0}
	&&  \|u^m_{tt}(0)\|^2+\kappa(\Delta u^m(0),\Delta u^m_{tt}(0))+(\eta^{0,m},u^m_{tt}(0))_{g,V}\nonumber\\
	&&\quad +(f(u^m(0))+a(u^m(0))u^m_t(0),u^m_{tt}(0))_{\Gamma_1}=0.
\end{eqnarray}
Using integration by parts and the compatibility condition \eqref{comp-cond}, we have
\begin{eqnarray*}
	&&\kappa(\Delta u^m(0),\Delta u^m_{tt}(0))+(\eta^{0,m},u^m_{tt}(0))_{g,V}\\
	&&=\kappa\int_{\Omega}\Delta u^m(0)\Delta u^m_{
		tt}(0)dx+\int_0^{\infty}g(\tau)\int_{\Omega}\Delta \eta^{0,m}(\tau)\Delta u^m_{tt}(0)dxd\tau\\
	&&=-\int_{\Gamma}\Big{(}\kappa\frac{\partial(\Delta u^m(0))}{\partial\nu}+\int_{0}^{\infty}g(\tau)\frac{\partial(\Delta \eta^{0,m}(\tau))}{\partial\nu}d\tau\Big{)}u^m_{tt}(0)d\Gamma\\
	&&\quad +\kappa\int_{\Omega}\Delta^2 u^m(0)u^{m}_{tt}(0)dx+\int_0^{\infty}g(\tau)\int_{\Omega}\Delta^2 \eta^{0,m}(\tau)u^{m}_{tt}(0)dxd\tau\\
	&&=-\int_{\Gamma_1}(f(u^m(0))+a(u^m(0))u^m_t(0))u^m_{tt}(0)d\Gamma\\
	&&\quad +\kappa\int_{\Omega}\Delta^2 u^m(0)u^{m}_{tt}(0)dx+\int_{\Omega}\int_0^{\infty}g(\tau)\Delta^2 \eta^{0,m}(\tau)d\tau u^{m}_{tt}(0)dx.
\end{eqnarray*}
Replacing the above equality into \eqref{utt0}, we obtain that
\begin{eqnarray*}
	\|u^m_{tt}(0)\|^2\le \left[\,\kappa\|\Delta^2u^m(0)\|+\int_0^{\infty}g(\tau)\|\Delta^2 \eta^{0,m}(\tau)\|d\tau\,\right]\|u^m_{tt}(0)\|.
\end{eqnarray*}
Thus, there exists $C>0$ such that
\begin{align}\label{est-utt0}
	\|u^m_{tt}(0)\|\le C,\quad \forall m\in \mathbb{N}.\end{align}
Returning to \eqref{est3-7}, we get
\begin{align}\label{est3-8}
	||z_t(t)\|^2_{\mathcal{H}}\equiv \|u^m_{tt}(t)\|^2+\kappa\|\Delta u^m_t(t)\|^2+\|\eta^{t,m}_t\|^2_{L^2_g(\mathbb{R}^+;V)}\le C,\quad \forall m\in \mathbb{N},\,\, \forall t\in [0,T).
\end{align}
From \eqref{est3-8}, we have
\begin{align}
	(z^m_t)\quad &\mbox{is bounded in}\quad L^{\infty}(0,T;\mathcal{H}),\label{bounded-4}\\
	(u^m_{tt})\quad &\mbox{is bounded in}\quad L^{2}(0,T;L^2(\Gamma_1)),\label{bounded-5}\\
	(\eta^{t,m}_t)\quad &\mbox{is bounded in}\quad L^{2}(0,T;L^2_g(\mathbb{R}^+;V)).\label{bounded-6}
\end{align}
\subsubsection*{\it \underline{Passage to the limit and existence of regular solution.}}
From \eqref{bounded-1}-\eqref{bounded-3} and \eqref{bounded-4}-\eqref{bounded-6} we obtain a subsequence $z^k=({u^k},u^k_t,\eta^{t,k})$ of  $z^m=({u^m},u^m_t,\eta^{t,m})$ such that
\begin{align}
	z^k\rightharpoonup z\quad &\mbox{weak-star in}\quad L^{\infty}(0,T;\mathcal{H}),\label{conv-1}\\
	z^k_t\rightharpoonup z_t\quad &\mbox{weak-star in}\quad L^{\infty}(0,T;\mathcal{H}),\\
	u^k\rightharpoonup u\quad &\mbox{weak in}\quad L^{2}(0,T;L^{2}(\Gamma_1)),\\
	u^k_t\rightharpoonup u_t\quad &\mbox{weak in}\quad L^{2}(0,T;L^{2}(\Gamma_1)),\\
	\eta^{t,k}\rightharpoonup \eta^t\quad &\mbox{weak in }\quad L^{2}(0,T;L^2_g(\mathbb{R}^+;V)),\\
	\eta^{t,k}_t\rightharpoonup \eta_t^t\quad &\mbox{weak in }\quad L^{2}(0,T;L^2_g(\mathbb{R}^+;V)).\label{conv-2}
\end{align}
With the convergences \eqref{conv-1}-\eqref{conv-2} we can use Lions-Aubin lemma to get necessary compactness in order to pass the limit in the approximate problem \eqref{aprox-equat} to obtain a regular function $z=(u,u_t,\eta^t)$ such that
\begin{align*}
	&u_{tt}+\kappa\Delta ^2u+\int_{0}^{\infty}g(\tau)\Delta^2\eta^t(\tau)d\tau=0\quad \mbox{in}\quad L^{\infty}(0,T;L^2(\Omega)),\\
	&\eta^t_t+\eta^t_s=u_t \quad \mbox{in}\quad L^{\infty}(0,T;L^2_g(\mathbb{R}^+;V)).
\end{align*}

\subsubsection*{Proof of the Theorem \ref{theo-existence}-${\bf II.}$}
Now, let's show the existence of a generalized (weak) solution to the problem \eqref{P}-\eqref{initial-condition0}. Indeed, let $z_0=(u_0,u_1,\eta_0)\in \mathcal{H}=\overline{\mathcal{H}_1}^{V\times L^{2}(\Omega)\times L^2_g(\mathbb{R}^+;V)}$. Then, since $\mathcal{H}_1$ is dense in $\mathcal{H}$ there exists a sequence $z^{\mu}_{0}=(u^{\mu}_{0},u^{\mu}_{1},\eta^{\mu}_0)\in \mathcal{H}_1$ such that
\begin{align}\label{weak-est0}
	||z^{\mu}_0-z_0||_{\mathcal{H}}\rightarrow_{\mu\rightarrow +\infty} 0.
\end{align}
From Theorem \ref{theo-existence}, for each $\mu\in \mathbb{N}$, there exists a regular solution $z^{\mu}(t)=(u^{\mu}(t),u^{\mu}_t(t),\eta^{t,\mu })$ that satisfies problem \eqref{P}-\eqref{initial-condition0}.

Multiplying the Eq, $\eqref{P}_1$ by $u^{\mu}_t$ and Eq. $\eqref{P}_2$ by $\eta^{t,\mu}$, integrating over $\Omega$, and using arguments similar to those used in the first estimate, we infer
\begin{align}\label{weak-est-1}
	||z^{\mu}(t)||^2_{\mathcal{H}}\le C,
\end{align}
where $C>0$ is a constant independent of $t\in[0,T]$ and $\mu\in \mathbb{N}$.

On the other hand, let $z^{\mu}(t)=(u^{\mu}(t),u^{\mu}_t(t),\eta^{t,\mu})$ and $z^{\sigma}(t)=(u^{\sigma}(t),u^{\sigma}_t(t),\eta^{t,\sigma})$ regular solutions of \eqref{P}-\eqref{initial-condition0}. Defining $w^{\sigma,\mu}=u^{\mu}-u^{\sigma}$, $\zeta^{t(\sigma,\mu)}=\eta^{t,\mu}-\eta^{t,\sigma}$,  $\widehat{f}(w^{\sigma,\mu})=f(u^{\mu})-f(u^{\sigma})$, and $\widehat{M}(w^{\sigma,\mu})=M(\|u^{\mu}(t)\|^2_{\Gamma_1})u^{\mu}_t-M(\|u^{\sigma}(t)\|^2_{\Gamma_1})u^{\sigma}_t$, then $z^{\mu}-z^{\sigma}=(w^{\sigma,\mu},\zeta^{t(\sigma,\mu)})$ satisfies the following problem
\begin{eqnarray}\left\{\begin{array}{l}\label{PD}
		w^{\sigma,\mu}_{tt}+\kappa\Delta^2 w^{\sigma,\mu}+\int_0^{\infty}g(\tau)\Delta^2\zeta^{t(\sigma,\mu )}(\tau)d\tau=0,\quad \mbox{in}\quad \Omega \times \mathbb{R}^+,\\
		\zeta_t^{t(\sigma,\mu)}=-\zeta_s^{t(\sigma,\mu)}+w^{\sigma,\mu}_t,\quad \mbox{in}\quad \Omega \times \mathbb{R}^+\times \mathbb{R}^+,
	\end{array}\right.\end{eqnarray}	
with  boundary condition
\begin{eqnarray}
	\left\{\begin{array}{l}
		\label{boundary-condition2}
		w^{\sigma,\mu}=\frac{\partial w^{\sigma,\mu}}{\partial \nu}=0\quad \mbox{on}\quad \Gamma_0\times\mathbb{R}^+\quad\mbox{and}\quad \zeta^{t(\sigma,\mu)}=\frac{\partial \zeta^{t(\sigma,\mu)}}{\partial \nu}=0\quad \mbox{on}\quad \Gamma_0\times[\mathbb{R}^+]^2,\\
		\kappa\Delta w^{\sigma,\mu}+\int_{0}^{\infty}g(\tau)\Delta \zeta^{t(\sigma,\mu)}(\tau)d\tau=0\quad\mbox{on}\quad \Gamma_1\times\mathbb{R}^+\times \mathbb{R}^+,\\
		\kappa\frac{\partial(\Delta w^{\sigma,\mu})}{\partial \nu}+\int_{0}^{\infty}g(\tau)\frac{\partial(\Delta \zeta^{t(\sigma,\mu)}(\tau))}{\partial\nu}d\tau=\widehat{f}(w^{\sigma,\mu})+\widehat{M}(w^{\sigma,\mu})\quad\mbox{on}\quad \Gamma_1\times\mathbb{R}^+\times \mathbb{R}^+,
	\end{array}\right.
\end{eqnarray}
and initial conditions
\begin{eqnarray}
	\left\{\begin{array}{l}
		\label{initial-condition11}
		w^{\sigma,\mu}(x,0)=w^{\sigma,\mu}_{0}(x),\quad w^{\sigma,\mu}_t(x,0)=w^{\sigma,\mu}_1(x),\quad x\in  \Omega,\\
		\zeta^{0(\sigma,\mu)}(x,s)=\zeta^{\sigma,\mu}_0(x,s),\quad\mbox{in}\quad \Omega\times \mathbb{R}^+.
	\end{array}\right.
\end{eqnarray}
Multiplying the Eq. $\eqref{PD}_1$ by $w^{\sigma,\mu}_t$ and Eq. $\eqref{PD}_2$ by $\zeta^{t(\sigma,\mu)}$, and integrating over $\Omega$, we obtain
\begin{align}	\label{asympt-12}\frac{1}{2}\frac{d}{dt}||z^{\mu}(t)-z^{\sigma}(t)||^2_{\mathcal{H}}+\left(\zeta^{t(\sigma,\mu)}_s,\zeta^{t(\sigma,\mu)}\right)_{g,V}+M(\|u^{\mu}(t)\|_{\Gamma_1}^2)\|w^{\sigma,\mu}_t(t)\|^2_{\Gamma_1}=L_1+L_2,
\end{align}
where
\begin{eqnarray*}
	L_1&=&-\left[\,M(\|u^{\mu}(t)\|^2_{\Gamma_1})-M(\|u^{\sigma}(t)\|^2_{\Gamma_1})\,\right]\int_{\Gamma_1}u^{\sigma}_tw^{\sigma,\mu}_td\Gamma,\\
	L_2&=&-\int_{\Gamma_1}\widehat{f}(w^{\sigma,\mu})w^{\sigma,\mu}_td\Gamma.
\end{eqnarray*}
Next, the two terms of the sum on the left side of the equality \eqref{asympt-12} can be reduced as follows. From Assumption \eqref{kernel_g'}, we get
\begin{eqnarray*}
	\left(\zeta^{t(\sigma,\mu)}_s,\zeta^{t(\sigma,\mu)}\right)_{g,V}&=&-\frac{1}{2}\int_0^{\infty}g'(\tau)\|\Delta \zeta^{t(\sigma,\mu)}(\tau)\|^2d\tau\\
	&\ge &\frac{\alpha_2}{2}\int_0^{\infty}g(\tau)\|\Delta \zeta^{t(\sigma,\mu)}(\tau)\|^2d\tau\\
	&=&\frac{\alpha_2}{2}\|\zeta^{t(\sigma,\mu)}\|^2_{L^2_g(\mathbb{R}^+;V)}.
\end{eqnarray*}
And, using that $M(\|u^{\mu}(t)\|^2_{\Gamma_1})\ge m_0>0$, it is straightforward that
\begin{align*}
	M(\|u^{\mu}(t)\|^2_{\Gamma_1})\|w^{\sigma,\mu}_t(t)\|^2_{\Gamma_1}\ge m_0\|w^{\sigma,\mu}_t(t)\|^2_{\Gamma_1}.
\end{align*}
Now, the terms $L_1$ and $L_2$ on the right side of \eqref{asympt-12} can be estimated as follows. Using that $M\in C^1(\mathbb{R}^+)$ and embedding $V\hookrightarrow L^{2}(\Gamma_1)$, we get
\begin{eqnarray*}
	M(\|u^{\mu}(t)\|^2_{\Gamma_1})-M(\|u^{\sigma}(t)\|^2_{\Gamma_1})\le C\left|\|u^{\mu}(t)\|^2_{\Gamma_1}-\|u^{\sigma}(t)\|^2_{\Gamma_1}\right|\le C\|w^{\sigma,\mu}(t)\|_{\Gamma_1}.
\end{eqnarray*}
Then, from Young's inequality, we have
\begin{eqnarray*}
	L_1\le C\|w^{\sigma,\mu}(t)\|_{\Gamma_1}\|w^{\sigma,\mu}_t(t)\|_{\Gamma_1}\le C\|w^{\sigma,\mu}(t)\|^2_{\Gamma_1}+\frac{m_0}{4}\|w^{\sigma,\mu}_t(t)\|^2_{\Gamma_1}.
\end{eqnarray*}
Finally, from Assumpton \eqref{diff}, H\"older inequality with $\frac{p}{2(p+1)}+\frac{1}{2(p+1)}+\frac{1}{2}=1$, embedding $V\hookrightarrow L^{2(p+1)}(\Gamma_1)$, and Young's inequality, we have
\begin{eqnarray*}
	L_2&\le& C_{f'}\int_{\Gamma_1}\left[1+(|u^{\mu}|+|u^{\sigma}|)^{p}\right]|w^{\sigma,\mu}||w^{\sigma,\mu}_t|d\Gamma\\
	&\le& C\left[1+\|u^{\mu}(t)\|^{p}_{2(p+1),\Gamma_1}+\|u^{\sigma}(t)\|^p_{2(p+1),\Gamma_1}\right]\|w^{\sigma,\mu}(t)\|_{2(p+1),\Gamma_1}\|w^{\sigma,\mu}_t(t)\|_{\Gamma_1}\\
	&\le& C\|w^{\sigma,\mu}(t)\|_{2(p+1),\Gamma_1}\|w^{\sigma,\mu}_t(t)\|_{\Gamma_1}\\
	&\le &C\|w^{\sigma,\mu}(t)\|^2_{2(p+1),\Gamma_1}+\frac{m_0}{4}\|w^{\sigma,\mu}_t(t)\|^2_{\Gamma_1}.
\end{eqnarray*}
Returning to \eqref{asympt-12}, we get
\begin{eqnarray}	\label{asympt-21}
	&&	\frac{d}{dt}||z^{\mu}(t)-z^{\sigma}(t)||^2_{\mathcal{H}}+\alpha_2\|\zeta^{t(\sigma,\mu)}\|^2_{L^2_g(\mathbb{R}^+;V)}+m_0\|w^{\sigma,\mu}_t(t)\|^2_{\Gamma_1}\nonumber\\
	&&\le C\left[\,\|w^{\sigma,\mu}(t)\|^2_{\Gamma_1}+\|w^{\sigma,\mu}(t)\|^2_{2(p+1),\Gamma_1}\,\right].
\end{eqnarray}
Now, using embeddings $V\hookrightarrow L^{2(p+1)}(\Gamma_1)\hookrightarrow L^2(\Gamma_1)$, it follows from \eqref{asympt-21} that
\begin{align}	\label{asympt-31}
	\frac{d}{dt}||z^{\mu}(t)-z^{\sigma}(t)||^2_{\mathcal{H}}\le C||z^{\mu}(t)-z^{\sigma}(t)||^2_{\mathcal{H}}.
\end{align}
Applying Gronwall's Lemma in \eqref{asympt-31}, we obtain
\begin{align}\label{weak-est-2}
	||z^{\mu}(t)-z^{\sigma}(t)||^2_{\mathcal{H}}\le e^{Ct}||z^{\mu}(0)-z^{\sigma}(0)||^2_{\mathcal{H}},\quad \forall t\in[0,T].
\end{align}
Using strong convergence \eqref{weak-est0}, it follows from \eqref{weak-est-2} that
\begin{align}\label{weak-est-22}
	||z^{\mu}(t)-z^{\sigma}(t)||^2_{\mathcal{H}}\le e^{CT}||z^{\mu}_0-z^{\sigma}_0||^2_{\mathcal{H}}\longrightarrow_{\mu,\sigma\rightarrow \infty} 0,\quad \forall t\in[0,T],
\end{align}
where $C=C(||z^{\mu}(0)||_{\mathcal{H}},||z^{\sigma}(0)||_{\mathcal{H}})$.

This proves that $\{z^{\mu}\}_{\mu\in \mathbb{N}}$ is a Cauchy's sequence of $C([0, T]; \mathcal{H})$. Therefore there exists a function $z\in C([0, T]; \mathcal{H})$ such that $z^{\mu}\rightarrow z$ in $ C([0, T]; \mathcal{H})$, that is
\begin{equation}\label{a1}
	\lim_{\mu\rightarrow +\infty}\max_{\tau\in [0,T]}||z^{\mu}(\tau)-z(\tau)||^2_{\mathcal{H}}=0.
\end{equation}
\begin{remark}
	Due to the regularity, the boundary condition \eqref{boundary-condition} makes  sense and implies that the compatibility condition \eqref{comp-cond} holds for $z^{\mu}(t)=(u^{\mu}(t),u^{\mu}_t(t),\eta^{t,\mu})$, for any $t\ge 0$.
	This shows that, for any $t\ge 0$, $z(t)$ is the $V\times L^2(\Omega)\times L^2_g(\mathbb{R}^+;V)$-limit of a sequence $z^{\mu}\in \mathcal{H}_1,$ that is $z(t)\in \mathcal{H}$.
\end{remark}	
\medskip
Note that the function $z(t)$ satisfies Definition \ref{Definition-solution}-${\bf (G})$. This guarantees the existence of a generalized solution for problem \eqref{P}-\eqref{initial-condition0}. By direct calculations we can show that \eqref{weak-solution} holds for the generalized solution $z(t)$, i.e., $z$ is also a weak solution.

\subsubsection*{\it \underline{Uniqueness}}
Let $z^1(t)=(u^1(t),u^1_t(t),\eta^t_1)$ and $z^2=(u^2(t),u^2_t(t),\eta^t_2)$ be two regular solutions of problem \eqref{P}-\eqref{initial-condition0} with respect to initial data $z^1(0)=(u^1(0),u^1_t(0),\eta^0_1),z^2(0)=(u^2(0),u^2_t(0),\eta^0_2)$, respectively, and set $w =u^1-u^2$ and $\zeta^t=\eta^t_1-\eta^t_2$. Proceeding analogously as in the proof of the generalized solution, using $u^1$ in the place of $u^{\mu}$, $u^2$ in the place of $u^{\sigma}$, $\eta^{t,\mu}$ in the place of $\eta^t_1$, and $\eta^{t,\sigma}$ in the place of $\eta^t_2$, we obtain the following inequality, similar to inequality \eqref{weak-est-2}
\begin{align*}
	||z^{1}(t)-z^{2}(t)||^2_{\mathcal{H}}\le e^{CT}||z^{1}(0)-z^{2}(0)||^2_{\mathcal{H}},\quad \forall t\in[0,T],
\end{align*}
where $C=C(||z^1(0)||_{\mathcal{H}},||z^2(0)||_{\mathcal{H}})$. This ensures the continuous dependence of regular solutions on the initial data in $\mathcal{H}.$ The same conclusion holds for generalized (weak) solutions by using density arguments. In particular for $z^1(0)=z^2(0)$, problem \eqref{P}-\eqref{initial-condition0} has a unique regular or generalized (weak) solution.

\medskip
This completes the proof of the Theorem \ref{theo-existence}.

\subsubsection*{Generation of a dynamical system}
The well-posedness of the problem \eqref{P}-\eqref{initial-condition0} given by Theorem \ref{theo-existence} implies that the evolution operator $S_t:\mathcal{H}\rightarrow \mathcal{H}$ defined by the formula
\begin{align}\label{semigroup}
	S_tz_0=z(t),\ t\geq0,
\end{align}
where $z(t)=(u(t),u_t(t),\eta^t)$ is the unique generalized (weak) solution of problem \eqref{P}-\eqref{initial-condition0} with initial value $z_0=(u_0,u_1,\eta_0)$, defines a nonlinear continuous semigroup which is locally Lipschitz continuous on the phase space $\mathcal{H}$. Hence, $(\mathcal{H},S_t)$ constitutes a dynamical systems that will describe the long-time dynamics of problem \eqref{P}-\eqref{initial-condition0}.

\section{Attractors for abstract dynamical systems}
This Section is dedicated to the presentation of some abstract results of the theory of connected dynamical systems with the existence of global attractors and their properties. This theory can be found in Chueshov and Lasiecka's books    \cite{chueshov-white,chueshov-yellow}.

\begin{definition}\rm
	A dynamical system is a pair $(X,S_t)$ of a complete metric space $X$ and a family of continuous mappings $S_t:X \to X$ such that $y\mapsto S_ty$ is continuous in $X$ for every $y\in X$ and the semigroup property is satisfied:
	$$S_0=I,\quad S_{t+s}=S_t\circ S_s,\quad  \forall s,t\geq0.$$
	$S_t$ is called evolution semigroup (or evolution operator).
\end{definition}
\begin{definition}\rm
	Let $(X,S_t)$ be a dynamical system given by a continuous semigroup $S_t$ on a complete metric space $X$. Then a {\bf global attractor} for $S_t$ is a compact set $\mathfrak{A}\subset X$ that is fully invariant and uniformly attracting, that is, $S_t\mathfrak{A}=\mathfrak{A}$ for all $t\ge 0$ and for every bounded subset $B\subset X,$
	$$
	\mbox{dist}_{X}(S_tB,\mathfrak{A}) = \sup_{x\in S_tB}\inf_{y\in
		\mathfrak{A}}d_{X}(x,y) \to 0 \quad \mbox{as} \quad t \to \infty.
	$$
\end{definition}

\begin{definition}\rm
	A bounded set $\mathcal{B} \subset X$ is an {\bf absorbing set} for $S_t$ if for any bounded set $B\subset X$, there exists $t_B\geq 0$ such that
	$$
	S_tB\subset \mathcal{B} , \quad \forall \, t \geq t_B ,
	$$
	which defines $(X,S_t)$ as a {\bf dissipative} dynamical system.
\end{definition}

\begin{definition}\rm
	A dynamical system $(X,S_t)$ is {\bf asymptotically smooth} in $X$ if for any bounded positive invariant set $B \subset X$, there exists a compact set $K \subset \overline{B}$, such that
	$$
	\mbox{dist}_{X}(S_tB,K) = 0 \quad \mbox{as} \quad t \to \infty .
	$$
\end{definition}

\begin{definition}\rm
	Let $\mathcal{N}$ be the set of stationary points of a dynamical system $(X,S_t)$:
	$$\mathcal{N}=\{v\in X:S_tv=v\;\;\mbox{for all}\;\;t\ge 0\}.$$
	We define the {\bf unstable manifold} $\mathrm{M}^u(\mathcal{N})$ emanating from set $\mathcal{N}$ as a set of all $y\in X$ such that there exists a full trajectory $\Upsilon=\{u(t):t\in \mathbb{R}\}$ with the properties
	$$u(0)=y\quad \mbox{and}\quad \lim_{t\rightarrow-\infty}\mbox{dist}_{X}(u(t),\mathcal{N})=0.$$
\end{definition}
\begin{definition}\rm
	Given a compact set $K$ in a metric space $X$, the {\bf fractal
		dimension} of $K$ is defined by
	$$
	\mbox{dim}_{f}^{X} K = \limsup_{\varepsilon \to 0} \frac{\ln
		n(K,\varepsilon)}{\ln(1/\varepsilon)},
	$$
	where $n(K,\varepsilon)$ is the minimal number of closed balls of
	radius $\varepsilon$ which covers $K$. Since the  Hausdorff
	dimension of a compact set does not exceed the fractal one (see
	e.g. \cite{hale}, Chapter 2), it is enough to prove the finiteness of the fractal dimension.
\end{definition}

\medskip
The next two theorems guarantee the existence of a compact global attractor and its geometric characterization for gradient, dissipative and asymptotically smooth systems.
\begin{theorem}[Theorem 2.3, \cite{chueshov-white}] \label{theo-chueshov}
Let $(X,S_t)$ be a dissipative dynamical system in a complete metric space $X$. Then $(X,S_t)$ possesses a compact global attractor $\mathfrak{A}$ if and only if $(X,S_t)$ is asymptotically smooth. 
\end{theorem}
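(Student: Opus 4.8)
The statement is a two-sided characterization, so the plan is to prove each implication separately, the reverse one (asymptotic smoothness $\Rightarrow$ existence of the attractor) being the substantive part. The unifying tool throughout is the $\omega$-limit set of a set $B$, namely $\omega(B):=\bigcap_{s\ge 0}\overline{\bigcup_{t\ge s}S_tB}$, equivalently the set of all limits $y=\lim_n S_{t_n}x_n$ with $t_n\to\infty$ and $x_n\in B$. For the easy direction I would assume a compact global attractor $\mathfrak{A}$ exists and take any bounded positively invariant set $B$, setting $K:=\omega(B)$. Since $B$ is positively invariant, every $S_{t_n}x_n$ lies in $B$, so $\omega(B)\subset\overline{B}$; since $\mathfrak{A}$ attracts $B$ and is closed, every accumulation point of such orbits lies in $\mathfrak{A}$, whence $\omega(B)\subset\mathfrak{A}$ is compact; and a short contradiction argument gives $\mbox{dist}_{X}(S_tB,\omega(B))\to 0$. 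Thus $K=\omega(B)\subset\overline{B}$ witnesses asymptotic smoothness.

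For the main direction the plan proceeds in stages. First, dissipativity furnishes a bounded absorbing set $\mathcal{B}_0$; because $\mathcal{B}_0$ absorbs itself there is $t_0\ge 0$ with $S_t\mathcal{B}_0\subset\mathcal{B}_0$ for all $t\ge t_0$, and I would replace it by $\mathcal{B}:=\overline{\bigcup_{t\ge t_0}S_t\mathcal{B}_0}$, which is bounded (contained in $\overline{\mathcal{B}_0}$), positively invariant, and still absorbing. Second, I apply asymptotic smoothness to this $\mathcal{B}$ to obtain a compact set $K\subset\overline{\mathcal{B}}$ with $\mbox{dist}_{X}(S_t\mathcal{B},K)\to 0$. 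Third, I define the candidate attractor $\mathfrak{A}:=\omega(\mathcal{B})$.

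It then remains to verify the three defining properties. Nonemptiness and compactness both follow from $K$: any orbit sequence $S_{t_n}x_n$ with $t_n\to\infty$ and $x_n\in\mathcal{B}$ has distance to the compact $K$ tending to zero, so a subsequence converges, giving $\omega(\mathcal{B})\neq\emptyset$ together with $\omega(\mathcal{B})\subset K$; being an intersection of closed sets it is closed, hence compact. Invariance $S_t\mathfrak{A}=\mathfrak{A}$ is the standard $\omega$-limit computation, using only continuity of $S_t$ and the semigroup identity $S_{t+s}=S_t\circ S_s$ to push limits through $S_t$ in both inclusions. For attraction I argue by contradiction: if $\mathfrak{A}$ failed to attract $\mathcal{B}$ there would exist $\varepsilon>0$, $t_n\to\infty$, $x_n\in\mathcal{B}$ with $\mbox{dist}_{X}(S_{t_n}x_n,\mathfrak{A})\ge\varepsilon$, yet compactness of $K$ forces a subsequence of $S_{t_n}x_n$ to converge to a point of $\omega(\mathcal{B})=\mathfrak{A}$, a contradiction. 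Attraction of an arbitrary bounded set $B'$ then follows at once because $\mathcal{B}$ is absorbing, so $S_tB'\subset\mathcal{B}$ for $t$ large.

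I expect the delicate points to be precisely those where asymptotic smoothness enters: the extraction of convergent subsequences used for nonemptiness and for attraction, and checking that the positively invariant absorbing set $\mathcal{B}$ is genuinely bounded so that asymptotic smoothness is applicable to it. Asymptotic smoothness is exactly the hypothesis supplying the compactness these subsequence arguments need — without it $\omega(\mathcal{B})$ may fail to be compact or attracting — so the hard part will be organizing the proof so that the abstract compact set $K$ delivers all the required accumulation, while keeping every argument within the purely metric (nonlinear, no linear structure) setting of $X$.
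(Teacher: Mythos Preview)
The paper does not prove this statement at all: it is quoted verbatim as Theorem 2.3 from Chueshov--Lasiecka \cite{chueshov-white} and used as a black box, so there is no ``paper's own proof'' to compare against. Your proposal is the standard proof of this classical abstract result (essentially the argument in \cite{chueshov-white,hale}): constructing a bounded positively invariant absorbing set from the given absorbing set, applying asymptotic smoothness to it, and identifying the attractor with its $\omega$-limit set; the argument is correct and no gaps are apparent.
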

\begin{theorem}[Theorem 7.5.6, \cite{chueshov-yellow}] \label{theo-chueshov-caracterization}
Let a dynamical system $(X,S_t)$ possess a compact global attractor $\mathfrak{A}$. Assume that there exists a strict Lyapunov function on $\mathfrak{A}$. Then $\mathfrak{A}=\mathcal{M}^z(\mathcal{N})$. Moreover the global attractor $\mathfrak{A}$ consists of full trajectories $\Upsilon=\{z(t):\,t\in \mathbb{R}\}$ such that
$$\lim_{t\rightarrow -\infty}\mbox{dist}_{X}(z(t),\mathcal{N})=0\quad \mbox{and}\quad \lim_{t\rightarrow +\infty}\mbox{dist}_{X}(z(t),\mathcal{N})=0.$$
\end{theorem}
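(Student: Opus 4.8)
The plan is to establish the two claimed assertions — the identification $\mathfrak{A}=\mathcal{M}^z(\mathcal{N})$ and the two-sided convergence to $\mathcal{N}$ of every trajectory lying on $\mathfrak{A}$ — by combining the structural fact that a compact global attractor is composed precisely of the bounded full trajectories of the system with a LaSalle-type invariance argument driven by the strict Lyapunov function $\Phi$. Throughout, a \emph{strict} Lyapunov function means a continuous $\Phi:X\to\mathbb{R}$ for which $t\mapsto\Phi(S_ty)$ is nonincreasing and for which constancy of $t\mapsto\Phi(S_ty)$ forces $y\in\mathcal{N}$.

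First I would record the standing structural property: since $\mathfrak{A}$ is a compact global attractor it is fully invariant, and every $y\in\mathfrak{A}$ lies on a full trajectory $z(t)$, $t\in\mathbb{R}$, entirely contained in $\mathfrak{A}$, while conversely any bounded full trajectory is contained in $\mathfrak{A}$. This immediately yields the easy inclusion $\mathcal{M}^z(\mathcal{N})\subset\mathfrak{A}$: a point of the unstable manifold sits on a full trajectory whose backward limit is the compact, hence bounded, set $\mathcal{N}$ and which is attracted forward to $\mathfrak{A}$, so the whole trajectory is bounded and therefore lies in $\mathfrak{A}$.

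The core of the argument is the reverse inclusion together with a limit-set analysis. I would fix $y\in\mathfrak{A}$ and a full trajectory $z(t)\subset\mathfrak{A}$ with $z(0)=y$. Since $\Phi$ is continuous it is bounded on the compact set $\mathfrak{A}$, and since $t\mapsto\Phi(z(t))$ is nonincreasing it admits finite limits $\Phi_{-}=\lim_{t\to-\infty}\Phi(z(t))$ and $\Phi_{+}=\lim_{t\to+\infty}\Phi(z(t))$. Because $\{z(t)\}\subset\mathfrak{A}$ is precompact in both time directions, the limit sets $\alpha(z)$ and $\omega(z)$ are nonempty, compact, invariant, and contained in $\mathfrak{A}$. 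On $\omega(z)$ the function $\Phi$ is constant equal to $\Phi_{+}$: for any $w\in\omega(z)$, continuity of $\Phi$ and invariance of $\omega(z)$ give $\Phi(S_tw)=\Phi_{+}$ for all $t\ge0$, and the strictness property then forces $w\in\mathcal{N}$; hence $\omega(z)\subset\mathcal{N}$, and symmetrically $\alpha(z)\subset\mathcal{N}$.

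From $\alpha(z)\subset\mathcal{N}$ and the standard convergence $\mathrm{dist}_X(z(t),\alpha(z))\to0$ as $t\to-\infty$ I obtain $\lim_{t\to-\infty}\mathrm{dist}_X(z(t),\mathcal{N})=0$, so $y\in\mathcal{M}^z(\mathcal{N})$; this gives $\mathfrak{A}\subset\mathcal{M}^z(\mathcal{N})$ and, with the first inclusion, the desired equality. The identical computation applied forward in time yields $\lim_{t\to+\infty}\mathrm{dist}_X(z(t),\mathcal{N})=0$, completing the proof. The hard part will be the step showing that the limit sets consist only of stationary points: this is exactly where the \emph{strict} Lyapunov structure is indispensable, since one must pass from constancy of $\Phi$ on an invariant limit set to the equilibrium property through the defining characterization of strictness, all while guaranteeing that $\alpha(z)$ and $\omega(z)$ are genuinely nonempty and invariant — a point that rests entirely on the compactness of $\mathfrak{A}$.
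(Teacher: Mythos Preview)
The paper does not give its own proof of this statement; it is quoted verbatim as Theorem 7.5.6 from \cite{chueshov-yellow} and used as a black box in the proof of Theorem \ref{theo-main}. Your argument is the standard LaSalle-type proof one finds in that reference: you use compactness and full invariance of $\mathfrak{A}$ to guarantee that every point lies on a bounded full trajectory with nonempty $\alpha$- and $\omega$-limit sets, then exploit monotonicity and boundedness of $\Phi$ along such trajectories to see that $\Phi$ is constant on each limit set, and finally invoke strictness to force those limit sets into $\mathcal{N}$. This is correct and matches the approach in the cited source; there is nothing to compare against within the present paper.
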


\begin{Assumption}\label{assumption-quasi-stable}\rm
	Let $Y,Z,W$ be three reflexive Banach spaces with $Y$ compactly embedded in $Z$ and set $X=Y\times Z \times W$, where the case with trivial space $W = \{0\}$ is allowed. Consider the dynamical system $(X,S_t)$ given by an evolution
	operator
	\begin{equation}\label{inforce1}
		S_tz = (u(t),u_{t}(t),\theta(t)), \qquad z=(u_0, u_1, \theta_0)
		\in X ,
	\end{equation}
	where the functions $u$ and $\theta$ possess the regularity
	\begin{equation}\label{inforce2}
		u \in C(\mathbb{R}^{+}; Y)\cap C^{1}(\mathbb{R}^{+}; Z), \quad
		\theta \in C(\mathbb{R}^{+}; W).
	\end{equation}
 \end{Assumption}
 \begin{definition}\rm \label{Def-quasi-stable} A dynamical system of the form \eqref{inforce1} is said to be stable modulo compact terms on a set
	$B \subset Y$ if there exist a compact seminorm $n_Y$ on
	$Y$ and nonnegative scalar functions $a(t)$ and $c(t)$ locally
	bounded in $[0,\infty)$, and $b(t)\in L^1(\mathbb{R}^{+})$ with
	$\displaystyle\lim_{t\to \infty}b(t)=0,$ such that
	\begin{equation} \label{local-lips}
		\Vert S_tz^1 - S_tz^2 \Vert_{X}^2 \le a(t)\Vert z^1 -
		z^2 \Vert_{X}^2,
	\end{equation}
	and
	\begin{equation} \label{stabili}
		\Vert S_tz^1 - S_tz^2 \Vert_{X}^2 \le b(t)\Vert z^1 -
		z^2 \Vert_{X}^2 + c(t) \sup_{0<s<t}\left[
		n_Y(u^1(s)-u^2(s)) \right]^2 ,
	\end{equation}
	for any $z^1,z^2 \in B$. The inequality (\ref{stabili}) is often
	called \textit{stabilizability inequality}.
\end{definition}

\medskip
In what follows we state three theorems established in \cite{chueshov-yellow} that guarantee that quasi-stable systems guarantee the existence of a compact global attractor and enjoy properties such as finite dimension, smoothness, and existence of exponential attractors.

\begin{theorem} [Theorem 7.9.4, \cite{chueshov-yellow}] \label{Exist-attractor}\label{quasi-stable-result} 
Let Assumption \ref{assumption-quasi-stable} be in force.
Assume that the dynamical system $(X,S_t)$ is quasi-stable on every bounded forward invariant set $B$ in $X$. Then $(X,S_t)$ is asymptotically smooth.
\end{theorem}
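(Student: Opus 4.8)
The plan is to derive asymptotic smoothness from the stabilizability inequality \eqref{stabili} via the \emph{method of contractive functions} of \cite{chueshov-yellow}. The criterion I will invoke reads as follows: a dynamical system $(X,S_t)$ is asymptotically smooth provided that for every bounded positively invariant set $B$ there exist a time $T>0$, a constant $\gamma\in(0,1)$, and a function $\Phi_{B,T}$ on $B\times B$ with
\begin{equation*}
\|S_Tz^1-S_Tz^2\|_{X}\le \gamma\,\|z^1-z^2\|_{X}+\Phi_{B,T}(z^1,z^2),\qquad z^1,z^2\in B,
\end{equation*}
where $\Phi_{B,T}$ is \emph{contractive}, meaning that for every sequence $\{z^n\}\subset B$ one can extract a subsequence $\{z^{n_k}\}$ with $\lim_{k}\lim_{l}\Phi_{B,T}(z^{n_k},z^{n_l})=0$. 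Thus the whole argument reduces to manufacturing such a decomposition out of the quasi-stability estimate and then checking the contractivity of the remainder term.

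First I would fix a bounded forward invariant set $B$ and write down \eqref{stabili}, which holds by hypothesis with $b(t)\to 0$ and a compact seminorm $n_Y$ on $Y$. Taking square roots and using $\sqrt{x+y}\le\sqrt{x}+\sqrt{y}$ turns \eqref{stabili} into
\begin{equation*}
\|S_tz^1-S_tz^2\|_{X}\le \sqrt{b(t)}\,\|z^1-z^2\|_{X}+\sqrt{c(t)}\,\sup_{0<s<t}n_Y\!\big(u^1(s)-u^2(s)\big),
\end{equation*}
where $u^i$ denotes the first component of the trajectory $S_{\cdot}z^i$. Since $b(t)\to 0$, I fix $T$ so large that $\gamma:=\sqrt{b(T)}<1$, and set $\Phi_{B,T}(z^1,z^2):=\sqrt{c(T)}\,\sup_{0<s<T}n_Y(u^1(s)-u^2(s))$. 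This is exactly the decomposition demanded by the criterion, so it only remains to prove that $\Phi_{B,T}$ is a contractive function on $B\times B$.

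The verification of contractivity is the heart of the matter, and I expect it to be the main obstacle. Given $\{z^n\}\subset B$, the boundedness and forward invariance of $B$ bound the components $u^n$ in $C([0,T];Y)$, while the uniform bound on $u^n_t$ in $Z$ (the second component of $S_sz^n$, controlled on $B$, together with the regularity \eqref{inforce2}) makes $s\mapsto u^n(s)$ uniformly Lipschitz, hence equicontinuous, as maps $[0,T]\to Z$. Since $Y\hookrightarrow Z$ is compact (Assumption \ref{assumption-quasi-stable}), the Arzel\`a--Ascoli theorem furnishes a subsequence $\{u^{n_k}\}$ that is Cauchy in $C([0,T];Z)$. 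Finally, as a consequence of the compact embedding $Y\hookrightarrow\hookrightarrow Z$ and an Ehrling-type interpolation inequality, the compact seminorm obeys $n_Y(v)\le \varepsilon\|v\|_{Y}+C_\varepsilon\|v\|_{Z}$ for all $v\in Y$; applying this to $v=u^{n_k}(s)-u^{n_l}(s)$ and passing to the supremum over $s$, the $Y$-term is absorbed into $2\varepsilon R$ (with $R$ the uniform $C([0,T];Y)$-bound) while the $Z$-term tends to $0$ along the convergent subsequence, so $\sup_{0<s<T}n_Y(u^{n_k}(s)-u^{n_l}(s))\to 0$ as $k,l\to\infty$. As $\varepsilon$ is arbitrary the double limit vanishes, $\Phi_{B,T}$ is contractive, and the criterion yields asymptotic smoothness of $(X,S_t)$. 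The delicate point throughout is upgrading the merely subsequential, pointwise-in-$s$ precompactness provided by $n_Y$ to convergence uniform in $s\in[0,T]$, which is precisely where the time-equicontinuity coming from \eqref{inforce2} and the compact embedding must be combined.
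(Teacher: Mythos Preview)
The paper does not prove this statement; it is quoted as Theorem 7.9.4 of \cite{chueshov-yellow} and used as a black box in the proof of Theorem \ref{theo-main}. Your sketch reproduces the argument from that reference: the contractive-function criterion, the choice of $T$ with $\sqrt{b(T)}<1$, and the verification that $\Phi_{B,T}$ is contractive via Arzel\`a--Ascoli in $C([0,T];Z)$ are exactly the steps in the book.

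One step you pass over too quickly is the inequality $n_Y(v)\le \varepsilon\|v\|_Y+C_\varepsilon\|v\|_Z$. This is \emph{not} the classical Ehrling lemma (which concerns three nested normed spaces, not an abstract compact seminorm), and it does not follow automatically from the definition of a compact seminorm together with the compact embedding $Y\hookrightarrow\hookrightarrow Z$. It is nevertheless true under Assumption \ref{assumption-quasi-stable}: arguing by contradiction and using reflexivity of $Y$, one is reduced to showing that $v_n\rightharpoonup 0$ in $Y$ implies $n_Y(v_n)\to 0$. This in turn follows by extracting an $n_Y$-Cauchy subsequence (compactness of $n_Y$), applying Mazur's lemma to a tail $\{v_j\}_{j\ge K}$ to produce convex combinations $\tilde v_N\to 0$ strongly in $Y$, and noting that $n_Y(v_k-\tilde v_N)\le\sup_{j\ge K}n_Y(v_k-v_j)$ for $k\ge K$. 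With that point justified your argument is complete and coincides with the source.
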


\begin{theorem} [Theorem 7.9.6, \cite{chueshov-yellow}] \label{quasi-stable-result} Assume that the dynamical system $(X,S_t)$ possess a compact global attractor $\mathfrak{A}$ and is quasi-stable on $\mathfrak{A}$. Then the attractor $\mathfrak{A}$  has a finite fractal dimension $dim^X_f\mathfrak{A}$.
\end{theorem}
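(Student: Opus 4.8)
The plan is to pass to the discrete dynamical system generated by the time-$T$ map and then invoke an abstract squeezing/covering criterion for the fractal dimension. Fix $T>0$ and set $V:=S_T$. Since $(X,S_t)$ admits the compact global attractor $\mathfrak{A}$, which is fully invariant, we have $V\mathfrak{A}=\mathfrak{A}$. As $\mathfrak{A}$ is compact and forward invariant, the quasi-stability hypothesis applies on $\mathfrak{A}$, so both \eqref{local-lips} and \eqref{stabili} hold for all $z^1,z^2\in\mathfrak{A}$. Because $b(t)\to 0$ as $t\to\infty$, I would first choose $T=T_*$ so large that $b(T_*)=:\beta^2$ with $0<\beta<1$. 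For this fixed $T_*$, writing $u^i$ for the first component of $S_\cdot z^i$, the stabilizability inequality \eqref{stabili} reads
\[
\|Vz^1-Vz^2\|_X^2\le \beta^2\|z^1-z^2\|_X^2+c(T_*)\sup_{0<s<T_*}\big[n_Y(u^1(s)-u^2(s))\big]^2 .
\]

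The heart of the argument is to read the last term as a compact seminorm on $X$. Setting, for $w=z^1-z^2$,
\[
\mathfrak{n}(w):=\sqrt{c(T_*)}\,\sup_{0\le s\le T_*}n_Y\big(u^1(s)-u^2(s)\big),
\]
and using $\sqrt{a^2+b^2}\le a+b$, the previous estimate yields the squeezing inequality
\[
\|Vz^1-Vz^2\|_X\le \beta\,\|z^1-z^2\|_X+\mathfrak{n}(z^1-z^2),\qquad z^1,z^2\in\mathfrak{A},\ \ \beta<1 .
\]
The main obstacle is to verify that $\mathfrak{n}$ is a \emph{compact} seminorm, i.e. that any $X$-bounded family of trajectory pairs admits a subsequence that is Cauchy for $\mathfrak{n}$. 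Here I would use the Lipschitz bound \eqref{local-lips}: it shows that $z\mapsto u(\cdot)$ maps $X$-bounded sets (in particular $\mathfrak{A}$) into bounded sets of $C([0,T_*];Y)$, since $\|u^1(s)-u^2(s)\|_Y\le\|S_sz^1-S_sz^2\|_X\le a_{T_*}\|z^1-z^2\|_X$ with $a_{T_*}=\max_{[0,T_*]}a(s)$. Moreover, on the bounded set $\mathfrak{A}$ the second component $u_t$ stays bounded in $Z$, so $s\mapsto u^1(s)-u^2(s)$ is equicontinuous as a $Z$-valued map. Combining the compact embedding $Y\hookrightarrow\hookrightarrow Z$ (hence the compactness of $n_Y$ on $Y$) with an Arzelà--Ascoli/Aubin--Lions argument, one extracts subsequences along which $\sup_{0\le s\le T_*}n_Y(u^1(s)-u^2(s))\to 0$, which is exactly the required precompactness of $\mathfrak{n}$.

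Finally, I would feed the squeezing inequality into the abstract covering lemma for a set invariant under a map that contracts modulo a compact seminorm. Since $\mathfrak{n}$ is compact, for each $\delta>0$ the unit ball of $(X,\|\cdot\|_X)$ is covered by a finite number $m(\delta)$ of sets of $\mathfrak{n}$-diameter at most $\delta$. Choosing $\delta$ so that $q:=\beta+\delta<1$, the standard argument shows that if $\mathfrak{A}$ is covered by $N$ balls of radius $r$, then $V\mathfrak{A}=\mathfrak{A}$ is covered by $N\cdot m(\delta)$ balls of radius $qr$; iterating and using $V^j\mathfrak{A}=\mathfrak{A}$ gives $n(\mathfrak{A},q^jr_0)\le m(\delta)^j\,n(\mathfrak{A},r_0)$. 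Passing to the definition of fractal dimension then yields the explicit bound
\[
\dim_f^X\mathfrak{A}\le \frac{\ln m(\delta)}{\ln(1/q)}<\infty ,
\]
which is the desired conclusion. I expect the compactness of the sup-seminorm $\mathfrak{n}$ — the time-uniform passage from $n_Y$ on $Y$ to a genuine compact seminorm on the whole phase space $X$ — to be the only genuinely delicate point; the covering count is then routine.
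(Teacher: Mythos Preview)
The paper does not prove this statement; it is quoted as Theorem~7.9.6 from Chueshov--Lasiecka and used as a black box, so there is no in-paper proof to compare against. Your outline does follow the strategy of the original proof in that monograph: pass to the time-$T_*$ map with $b(T_*)<1$, recast the stabilizability estimate as a discrete squeezing inequality, and conclude via an iterated covering count.

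There is, however, a real gap in how you set up $\mathfrak{n}$. You declare $\mathfrak{n}(w)$ with $w=z^1-z^2$, but for a nonlinear semigroup the trajectory difference $u^1(s)-u^2(s)$ is \emph{not} a function of $w$ alone; it depends on $z^1$ and $z^2$ separately. Hence $\mathfrak{n}$ is not a seminorm on $X$, and the covering lemma you invoke---which needs to speak of the $\mathfrak{n}$-diameter of subsets of $X$---does not apply as stated. The standard remedy (used in Chueshov--Lasiecka) is to lift to the extended space $\mathcal{X}=X\times C([0,T_*];Y)$: one considers the set $\mathfrak{A}_{T_*}=\{(S_{T_*}z_0,\,u(\cdot)|_{[0,T_*]}):z_0\in\mathfrak{A}\}$, on which the induced map is Lipschitz by \eqref{local-lips}, and where $N(z,\varphi):=\sup_{s\in[0,T_*]}n_Y(\varphi(s))$ is now a genuine compact seminorm on $\mathcal{X}$ (via Arzel\`a--Ascoli, using the bound on $u_t$ in $Z$ and the compactness of $n_Y$). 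One applies the covering lemma in $\mathcal{X}$ to obtain $\dim_f^{\mathcal{X}}\mathfrak{A}_{T_*}<\infty$ and projects back along the Lipschitz projection $\mathcal{X}\to X$ to conclude $\dim_f^{X}\mathfrak{A}<\infty$. You correctly anticipated that the passage from $n_Y$ to a usable seminorm on the whole phase space is the delicate point, but the lift to the trajectory space is precisely the missing ingredient that makes it rigorous.
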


\begin{theorem} [Theorem 7.9.8, \cite{chueshov-yellow}] \label{regularity-result} Assume that the dynamical system $(X,S_t)$ possess a compact global attractor $\mathfrak{A}$ and is quasi-stable on $\mathfrak{A}$. Moreover, we assume that \eqref{stabili} holds with the function $c(t)$ possessing the property $c_{\infty}=\sup_{t\in \mathbb{R}^+}c(t)<\infty$. Then any full trajectory $\{(u(t);u_t(t);\theta(t)):t\in \mathbb{R}\}$ that belongs to the global attractor enjoys the following regularity properties,
	$$u_t\in L^{\infty}(\mathbb{R};Y)\cap C(\mathbb{R};Z),\quad u_{tt}\in L^{\infty}(\mathbb{R};Z),\quad \theta\in L^{\infty}(\mathbb{R};W).$$
	Moreover, there exists $R>0$ such that
	$$|u_t(t)|_Y^2+|u_{tt}(t)|_Z^2+|\theta_t(t)|_W^2\le R^2,\quad t\in \mathbb{R},$$
	where $R$ depends on the constant $c_{\infty}$, on the seminorm $n_Y$ in Definition \ref{Def-quasi-stable}, also on the embedding properties of $Y$ into $Z$.
\end{theorem}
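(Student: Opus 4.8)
The plan is to bound the time–difference quotients of a full trajectory on $\mathfrak{A}$ uniformly in the time–step, and then to recover the time derivatives by weak compactness in the reflexive space $X=Y\times Z\times W$. Fix a full trajectory $U(t)=(u(t),u_t(t),\theta(t))\in\mathfrak{A}$, $t\in\mathbb{R}$, and for $h>0$ put $Z_h(t)=U(t+h)-U(t)$. By full invariance of $\mathfrak{A}$, both $U(\cdot)$ and $U(\cdot+h)$ are trajectories lying on the attractor, so $Z_h$ is a difference of two solutions with data in $\mathfrak{A}$; in particular $\|Z_h(t)\|_X\le R_0:=\mathrm{diam}_X\mathfrak{A}$ for all $t,h$, and the $u$-component of $S_\sigma U(\tau)$ equals $u(\tau+\sigma)$.

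First I would propagate the stabilizability inequality \eqref{stabili} backward in time. Using the identity $U(t)=S_sU(t-s)$ valid for every $s>0$, applying \eqref{stabili} with $z^1=U(t-s+h)$, $z^2=U(t-s)$ over the interval of length $s$, and bounding $c(s)\le c_\infty$ and $\|Z_h(t-s)\|_X\le R_0$, I obtain
\begin{equation*}
\|Z_h(t)\|_X^2\le b(s)R_0^2+c_\infty\sup_{\tau\in\mathbb{R}}\big[n_Y\big(u(\tau+h)-u(\tau)\big)\big]^2 .
\end{equation*}
Since $b(s)\to0$ as $s\to\infty$, letting $s\to\infty$ removes the first term and yields the $t$-uniform bound $\sup_t\|Z_h(t)\|_X^2\le c_\infty\sup_\tau[n_Y(u(\tau+h)-u(\tau))]^2$.

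The decisive step is to convert the right-hand side into an $O(h^2)$ quantity, which I would do by a self-improving absorption. Dividing the previous bound by $h^2$, writing $w_h(\tau)=h^{-1}(u(\tau+h)-u(\tau))$ and $M_h:=\sup_t h^{-1}\|Z_h(t)\|_X$ (finite since $M_h\le R_0/h$), I note two facts: $\|w_h(\tau)\|_Y\le h^{-1}\|Z_h(\tau)\|_X\le M_h$, while $w_h(\tau)=h^{-1}\int_0^h u_t(\tau+\sigma)\,d\sigma$ is bounded in $Z$ by $K:=\|u_t\|_{L^\infty(\mathbb{R};Z)}$, which is finite because $\mathfrak{A}$ is compact in $X$ and $u\in C^1(\mathbb{R};Z)$ by \eqref{inforce2}. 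Exploiting that $n_Y$ is a compact seminorm (Definition \ref{Def-quasi-stable}) together with the compact embedding $Y\hookrightarrow\hookrightarrow Z$, an interpolation estimate of the form $n_Y(w)^2\le\delta\|w\|_Y^2+C_\delta\|w\|_Z^2$ gives $M_h^2\le c_\infty(\delta M_h^2+C_\delta K^2)$; choosing $\delta$ with $c_\infty\delta\le\tfrac12$ and absorbing, I conclude $M_h^2\le 2c_\infty C_\delta K^2$, a bound independent of $h$.

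It then remains to pass to the limit. Since $\sup_{0<h\le1}M_h<\infty$ and $X$ is reflexive, the difference quotients $h^{-1}Z_h(t)$ are bounded in $X$ and converge weakly, as $h\to0$, to the distributional derivative $U_t(t)=(u_t(t),u_{tt}(t),\theta_t(t))$ componentwise; weak lower semicontinuity of the norms then gives $\|u_t(t)\|_Y^2+\|u_{tt}(t)\|_Z^2+\|\theta_t(t)\|_W^2\le 2c_\infty C_\delta K^2=:R^2$ for every $t$, which is the asserted uniform estimate together with the memberships $u_t\in L^\infty(\mathbb{R};Y)$, $u_{tt}\in L^\infty(\mathbb{R};Z)$, $\theta_t\in L^\infty(\mathbb{R};W)$; the continuity $u_t\in C(\mathbb{R};Z)$ is already contained in \eqref{inforce2}. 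I expect the main obstacle to lie precisely in the interpolation step: justifying the inequality for the abstract compact seminorm $n_Y$ — which requires $n_Y$ to be compatible with $Z$-convergence on $Y$-bounded sets — and verifying that the weak limits of the quotients are genuinely the time derivatives in each factor $Y$, $Z$, $W$ rather than merely some weak accumulation points.
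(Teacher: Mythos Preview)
The paper does not supply its own proof of this statement: it is quoted verbatim as Theorem 7.9.8 from \cite{chueshov-yellow} in the preliminary Section 3, which merely collects abstract results from the literature without argument. So there is no in-paper proof to compare against.

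That said, your sketch is the standard argument from \cite{chueshov-yellow}: take difference quotients along a full trajectory on $\mathfrak{A}$, apply the stabilizability estimate on $[t-s,t]$, send $s\to\infty$ to kill the $b(s)$ contribution via the uniform bound $\mathrm{diam}_X\mathfrak{A}$, then close a self-improving inequality for $M_h$ through an Ehrling-type bound on the compact seminorm, and finally extract the time derivative by weak compactness in the reflexive space $X$. The interpolation step you flag is indeed the only point requiring care, but it goes through: compactness of $n_Y$ on $Y$ together with reflexivity of $Y$ and the embedding $Y\hookrightarrow Z$ yields, by the usual contradiction argument, that for every $\delta>0$ there is $C_\delta$ with $n_Y(w)^2\le \delta\|w\|_Y^2+C_\delta\|w\|_Z^2$, which is exactly what you use. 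Your identification of the weak limit of $h^{-1}Z_h(t)$ with $(u_t,u_{tt},\theta_t)$ is also fine, since $u\in C^1(\mathbb{R};Z)$ and $\theta\in C(\mathbb{R};W)$ by \eqref{inforce2} give the distributional derivative componentwise and uniqueness of weak limits does the rest.
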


\section{Main results: Theorem \ref{theo-main}}
Our main results on compact global attractors for the dynamical system $(\mathcal{H},S_t)$ are established in the theorem below.
\begin{theorem}
	\label{theo-main} Under the assumptions of Theorem \ref{theo-existence}, we have:
	\begin{itemize}
		\item[{\bf I}.] {\bf (Global attractor)} the dynamical system $(\mathcal{H},S_t)$ possesses a unique compact global attractor $\mathfrak{A}\subset \mathcal{H}$, which is characterized by the unstable manifold $\mathfrak{A}=\mathcal{M}^z(\mathcal{N})$, emanating from the set $\mathcal{N}$ of stationary solutions. Moreover, $\mathfrak{A}$ consists of full trajectories $\{z(t):t\in \mathbb{R}\}$ such that
		$$\lim_{t\rightarrow -\infty}\mbox{dist}_{\mathcal{H}}\left(z(t),\mathcal{N}\right)=\lim_{t\rightarrow +\infty}\mbox{dist}_{\mathcal{H}}\left(z(t),\mathcal{N}\right)=0.$$
	\end{itemize}
	In addition, if $p<p^*$:
	\begin{itemize}
		\item[{\bf II}.] {\bf (Finite-dimensionality)} the compact global attractor $\mathfrak{A}\subset \mathcal{H}$ has finite fractal dimension $dim^f_{\mathcal{H}}\mathfrak{A}$.
		\item[{\bf III}.] {\bf (Regularity)} any full trajectory $\{z(t):t\in \mathbb{R}\}$ from the attractor $\mathfrak{A}$ possesses the properties
		\begin{align*}
			||z(t)||^2_{\mathcal{H}}\le R_1^2\quad\mbox{and}\quad
			\|\Delta u_t(t)\|^2+\|u_{tt}(t)\|^2+\|\eta^t_t\|^2_{L^2_g(\mathbb{R}^+;V)}\le R_2^2,
		\end{align*}
		for all $t\in \mathbb{R}$, and some positive constants $R_1$ and $R_2$.
	\end{itemize}
\end{theorem}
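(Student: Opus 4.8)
The plan is to assemble the abstract criteria of Section~3 on top of the three structural facts already recorded for $(\mathcal{H},S_t)$: dissipativity (Proposition~\ref{absorbing-set}), the gradient structure (Proposition~\ref{Prop-grad-syst}), and quasi-stability in the subcritical regime (Corollary~\ref{Cor-quasi-stable}). For Part~\textbf{I}, I would first invoke Proposition~\ref{absorbing-set} to fix a bounded absorbing set $\mathcal{B}\subset\mathcal{H}$, so that $(\mathcal{H},S_t)$ is dissipative. Next I would establish asymptotic smoothness: by Corollary~\ref{Cor-quasi-stable} the system is quasi-stable on every bounded forward-invariant set, which is exactly the hypothesis of Theorem~\ref{Exist-attractor}, so $(\mathcal{H},S_t)$ is asymptotically smooth. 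Dissipativity together with asymptotic smoothness then yields, via Theorem~\ref{theo-chueshov}, a compact global attractor $\mathfrak{A}\subset\mathcal{H}$.

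To obtain the characterization $\mathfrak{A}=\mathcal{M}^z(\mathcal{N})$ I would use the energy $E$ from \eqref{energy-functional} as a strict Lyapunov function: the dissipation identity \eqref{first-001} shows that $t\mapsto E(S_tz)$ is non-increasing, and Proposition~\ref{Prop-grad-syst} supplies the strictness, namely that constancy of $E$ along a full trajectory forces the state into $\mathcal{N}$. Theorem~\ref{theo-chueshov-caracterization} then delivers both the identity $\mathfrak{A}=\mathcal{M}^z(\mathcal{N})$ and the two-sided convergence $\mathrm{dist}_{\mathcal{H}}(z(t),\mathcal{N})\to0$ as $t\to\pm\infty$, completing Part~\textbf{I}.

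For Parts~\textbf{II} and~\textbf{III}, assuming now $p<p^*$, I would cast the problem into Assumption~\ref{assumption-quasi-stable} with the identifications
\[
Y=V,\qquad Z=L^2(\Omega),\qquad W=L^2_g(\mathbb{R}^+;V),
\]
checking that the solution regularity of Definition~\ref{Definition-solution} yields \eqref{inforce2} and that $Y\hookrightarrow Z$ is compact. Since $\mathfrak{A}$ is compact and fully invariant, Corollary~\ref{Cor-quasi-stable} gives quasi-stability on $\mathfrak{A}$, and Theorem~\ref{quasi-stable-result} then yields $\dim^f_{\mathcal{H}}\mathfrak{A}<\infty$ (Part~\textbf{II}). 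For Part~\textbf{III}, I would verify that the constant $c(t)$ in the stabilizability estimate \eqref{stabili} satisfies $c_\infty=\sup_t c(t)<\infty$ (it arises from a Gronwall bound and is in fact constant), so that Theorem~\ref{regularity-result} applies and gives, under the above identifications, $u_t\in L^\infty(\mathbb{R};V)$, $u_{tt}\in L^\infty(\mathbb{R};L^2(\Omega))$, and $\eta^t_t\in L^\infty(\mathbb{R};W)$ together with the uniform bound $R_2$; the estimate $\|z(t)\|_{\mathcal{H}}\le R_1$ is merely the boundedness of the compact set $\mathfrak{A}$.

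The hard part will be confirming that $E$ is a \emph{strict} Lyapunov function, i.e.\ that vanishing of the dissipation forces stationarity. Since the frictional damping acts only on $\Gamma_1$ and the memory dissipation controls only $\eta^t$, one must argue that $E(S_tz)\equiv E(z)$ implies $u_t\equiv0$ on $\Gamma_1$ and $\eta^t\equiv0$, and then exploit the memory law $\eta^t_t=-\eta^t_s+u_t$ to upgrade this to $u_t\equiv0$ in $\Omega$, so that the trajectory reduces to an equilibrium of the stationary problem; this is the content that must be extracted from Proposition~\ref{Prop-grad-syst}. A secondary technical point is the correct choice of the compact seminorm $n_Y$ on $V$ (a lower-order boundary seminorm such as $\|\cdot\|_{\Gamma_1}$) so that the quasi-stability estimate furnished by Corollary~\ref{Cor-quasi-stable} fits the template \eqref{stabili} of Definition~\ref{Def-quasi-stable}.
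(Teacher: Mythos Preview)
Your proposal is correct and follows essentially the same route as the paper: dissipativity via Proposition~\ref{absorbing-set}, asymptotic smoothness via quasi-stability (Corollary~\ref{Cor-quasi-stable} feeding into Theorem~\ref{Exist-attractor}), existence of $\mathfrak{A}$ from Theorem~\ref{theo-chueshov}, the gradient characterization from Proposition~\ref{Prop-grad-syst} and Theorem~\ref{theo-chueshov-caracterization}, and then Theorems~\ref{quasi-stable-result} and~\ref{regularity-result} for Parts~II and~III. Your additional remarks on verifying $c_\infty<\infty$, on the identification $Y=V$, $Z=L^2(\Omega)$, $W=L^2_g(\mathbb{R}^+;V)$, and on the mechanism behind the strict Lyapunov property are all accurate elaborations of steps the paper either leaves implicit or defers to the cited propositions.
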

\begin{proof}
	{\bf I.} In Proposition \ref{absorbing-set} we showed that the dynamical system $(\mathcal{H},S_t)$ has an absorbing set $\mathcal{B}$, which guarantees that $(\mathcal{H},S_t)$ is dissipative. In Proposition \ref{Prop-grad-syst} we show that $(\mathcal{H},S_t)$ is gradient, and in Corollary \ref{Cor-quasi-stable}, we show that $(\mathcal{H},S_t)$ is quasi-stable. Therefore, the existence of a compact global attractor $\mathfrak{A}$ and its geometric characterization $\mathfrak{A}=\mathcal{M}^z(\mathcal{N})$ follows directly from the combination of Theorems \ref{Exist-attractor}, \ref{theo-chueshov}, and \ref{theo-chueshov-caracterization}.\\
	{\bf II.} In Corollary \ref{Cor-quasi-stable} we prove that in the subcritical case $p<p*$ system $(\mathcal{H},S_t)$ is quasi-stable. Therefore, it follows the direct application of the Theorem \ref{quasi-stable-result} that the global attractor $\mathfrak{A}$ has finite fractal dimension.\\
	{\bf III.} The proof follows as a direct application of Theorem \ref{regularity-result} because the system is quasi-stable by Corollary \ref{Cor-quasi-stable}.
\end{proof}

\section{Complementation of the proof of Theorem \ref{theo-main}}
\subsection{Existence of bounded absorbing sets}
\begin{proposition}{\bf (Absorbing set)} \label{absorbing-set}
	Assume that Assumption \ref{A1} holds true, then the semigroup defined by problem \eqref{P}-\eqref{initial-condition0} has a bounded absorbing set in $\mathcal{H}$, i.e., for any bounded set $B\subset \mathcal{H}$ there exist $t_0(B)>0$ and $R>0$, such that
	\begin{align*}
		\|S_tz_0\|_{\mathcal{H}}\leq R,\quad \text{for all}\ t\geq t_0(B),
	\end{align*}
	where $z_0=(u_0,u_1,\eta_0)$ and $\mathcal{B}=\overline{B}_{\mathcal{H}}(0,R)$ is a bounded absorbing set of the dynamical system $(\mathcal{H},S_t).$
\end{proposition}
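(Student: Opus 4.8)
The plan is to run a perturbed-energy (Lyapunov functional) argument based on the energy identity already used in the first a priori estimate. Differentiating $E$ along regular solutions and invoking \eqref{hyp_M} and \eqref{kernel_g'} gives the dissipation law
\begin{align*}
\frac{d}{dt}E(z(t)) \le -m_0\|u_t(t)\|^2_{\Gamma_1}-\frac{\alpha_2}{2}\|\eta^t\|^2_{L^2_g(\mathbb{R}^+;V)}\le 0,
\end{align*}
so $E$ is non-increasing and, by \eqref{first-3}, coercive: $E(z)\ge -C_f\,\mathrm{meas}(\Gamma_1)+\frac{\varrho}{2}\|z\|^2_{\mathcal{H}}$. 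By itself this only yields $\|z(t)\|^2_{\mathcal{H}}\le C(\|z_0\|_{\mathcal{H}})$; to upgrade to absorption I must manufacture decay in the two directions the damping does not see directly, namely the elastic energy $\|\Delta u\|^2$ and the interior velocity $\|u_t\|^2$.

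To this end I would introduce two auxiliary functionals. The first, $\Phi_1(t)=(u_t,u)$, recovers the elastic term: testing $\eqref{P}_1$ with $u$ gives
\begin{align*}
\frac{d}{dt}\Phi_1(t)=\|u_t\|^2-\kappa\|\Delta u\|^2-(\eta^t,u)_{g,V}-(f(u),u)_{\Gamma_1}-a(t)(u_t,u)_{\Gamma_1}.
\end{align*}
The second is the memory functional $\Phi_2(t)=-\left(u_t,\int_0^{\infty}g(\tau)\eta^t(\tau)\,d\tau\right)$; using $\eta^t_t=-\eta^t_s+u_t$, the vanishing $\eta^t(0)=0$, and an integration by parts in $s$, its derivative reproduces the interior velocity with the correct sign,
\begin{align*}
\frac{d}{dt}\Phi_2(t)=-(1-\kappa)\|u_t\|^2+R_2(t),
\end{align*}
where $1-\kappa=\int_0^{\infty}g\,d\tau>0$ and $R_2(t)$ collects terms controllable by the memory dissipation $\|\eta^t\|^2_{L^2_g}$, by $\|\Delta u\|^2$, and by the boundary data arising from \eqref{boundary-condition}.

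I would then set $\mathcal{L}(t)=NE(t)+\epsilon_1\Phi_1(t)+\epsilon_2\Phi_2(t)$ and first fix the parameters so that $\mathcal{L}$ is equivalent to $E$ (hence to $\|\cdot\|^2_{\mathcal{H}}$ up to an additive constant), using $|\Phi_1|+|\Phi_2|\le C\|z\|^2_{\mathcal{H}}$ for $N$ large and $\epsilon_1,\epsilon_2$ small. Choosing $\epsilon_2(1-\kappa)>\epsilon_1$ kills the unwanted $+\epsilon_1\|u_t\|^2$ produced by $\Phi_1'$, and taking $N$ large absorbs the remaining cross terms into the dissipation, leading to
\begin{align*}
\frac{d}{dt}\mathcal{L}(t)\le -\beta\,\mathcal{L}(t)+C,\qquad \beta>0.
\end{align*}
Here the source contribution is reabsorbed through the upper bound in \eqref{diff2}, namely $-(f(u),u)_{\Gamma_1}\le -\int_{\Gamma_1}F(u)\,d\Gamma+\frac{c_f}{2}\|u\|^2_{\Gamma_1}$, which feeds the $-\beta\int_{\Gamma_1}F\,d\Gamma$ needed by the energy part of $\mathcal{L}$. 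Gronwall then gives $\mathcal{L}(t)\le \mathcal{L}(0)e^{-\beta t}+C/\beta$, and coercivity converts this into $\|z(t)\|^2_{\mathcal{H}}\le R^2$ for $t\ge t_0(B)$, so that $\mathcal{B}=\overline{B}_{\mathcal{H}}(0,R)$ is absorbing. These computations are first carried out for regular solutions and extended to generalized/weak solutions by the continuity established in Theorem \ref{theo-existence}.

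The main obstacle is the nonlocal, non-monotone coefficient $a(t)=M(\|u\|^2_{\Gamma_1})$, which produces the cross term $a(t)(u_t,u)_{\Gamma_1}$ in $\Phi_1'$ and analogous boundary terms in $\Phi_2'$. Following the good-part/bad-part splitting mentioned in the introduction, I would retain the full dissipative term $N a(t)\|u_t\|^2_{\Gamma_1}$ coming from $NE'$ and apply Young's inequality so that the velocity factor of each cross term is absorbed by this good part with its own weight $a(t)$, while the residual $a(t)\|u\|^2_{\Gamma_1}$ is handled through $\|u\|^2_{\Gamma_1}\le\lambda_1\|\Delta u\|^2$ together with the a priori energy bound, which keeps $a(t)$ under control along the orbit. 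The second delicate point is the critical growth of $f$: the products $(f(u),u)_{\Gamma_1}$ must be estimated with the trace-Sobolev embeddings \eqref{imbeddings}—the restriction $p\le p^*$ is precisely what makes these boundary terms finite-energy—and then tied back to the $\int_{\Gamma_1}F\,d\Gamma$ appearing in $E$ via \eqref{diff2}.
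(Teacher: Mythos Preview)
Your proposal is correct and follows essentially the same route as the paper: the paper also builds the perturbed Lyapunov functional $E^{\delta}=\widetilde{E}+\delta\phi_1+\tfrac{2\delta}{1-\kappa}\phi_2$ with exactly your $\Phi_1=(u_t,u)$ and $\Phi_2=-(u_t,\int_0^\infty g\,\eta^t)$, proves the equivalence $E^{\delta}\sim\widetilde{E}$, and derives $\frac{d}{dt}E^{\delta}\le -\delta\widetilde{E}+\delta C_f\,\mathrm{meas}(\Gamma_1)$ before applying Gronwall. The only cosmetic difference is that the paper fixes the ratio $\epsilon_2/\epsilon_1=\tfrac{2}{1-\kappa}$ and $N=1$ from the outset and bounds $a(t)\le c_3(B)$ via the a priori estimate \emph{before} applying Young's inequality rather than after, exactly as you anticipate in your last paragraph.
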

\begin{proof}
	Let us begin by fixing an arbitrary bounded set $B\subset \mathcal{H}$ and consider the solutions of problem \eqref{P}-\eqref{initial-condition0} given by
	$z(t)=(u(t),u_t(t),\eta^t) = S_tz_0$ with $z_0=(u_0,u_1,\eta_0)\in B.$
	In order, by arguments analogous to those used in the first estimate, we get
	\begin{align*}
		||z(t)||^2_{\mathcal{H}}\le  \frac{2(C_fmeas(\Gamma_1)+E(z_0))}{\varrho},\quad \forall t\ge 0.
	\end{align*}
	Thus, since $E(z_0)\le c_0(B)$, we have
	\begin{align}\label{est-z}
		||z(t)||^2_{\mathcal{H}}\le  \frac{2(C_fmeas(\Gamma_1)+c_B)}{\varrho}=:c_1(B),\quad \forall t\ge 0.
	\end{align}
	Next, let us define the modified energy functional
	\begin{align*}
		\widetilde{E}(z(t)):=E(z(t))+C_fmeas(\Gamma_1).
	\end{align*}
	\paragraph{Affirmation: $\widetilde{E}(z(t))$ dominates $||z(t)||_{\mathcal{H}}$.} Indeed, from Assumption \eqref{diff2} and embedding $V\hookrightarrow L^2(\Gamma_1)$, we have
	\begin{align}\label{first-x}
		\int_{\Gamma_1}F(u)d\Gamma\ge  -C_fmeas(\Gamma_1)-\frac{c_f\lambda_1}{2}\|\Delta u(t)\|^2.
	\end{align}
	Then, using that $\varrho:=1-\frac{c_f\lambda_1}{\kappa}>0$, it follows from \eqref{first-x} that
	\begin{eqnarray}\label{s'}
		E(z(t))&\geq&\frac{1}{2}\|u_t(t)\|^2+\frac{\varrho\kappa}{2}\|\Delta u(t)\|^2+\frac{1}{2}\|\eta^t\|^2_{L^2_{g}(\mathbb{R}^+;V)}
		-C_f meas(\Gamma_1)\nonumber\\
		&\ge& \frac{\varrho}{2}||z(t)||^2_{\mathcal{H}}-C_fmeas(\Gamma_1).
	\end{eqnarray}
	Therefore, from \eqref{s'}, we get
	\begin{align}\label{s}\widetilde{E}(z(t))\ge \frac{\varrho}{2}||z(t)||^2_{\mathcal{H}}.\end{align}
	Which proves the affirmation.
	
	\medskip
	Now, let’s define the functional
	\begin{align}\label{def_L1}
	E^{\delta}(z(t)):=\widetilde{E}(z(t))+\delta \phi_1(t)+\frac{2\delta}{1-\kappa}\phi_2(t),
	\end{align}
	where
	\begin{align*}
		\phi_1(t):=\int_{\Omega}u(t)u_t(t)dx\quad \mbox{and}\quad
		\phi_2(t):=-\int_{\Omega}u_t(t)\int_{0}^{\infty}g(s)\eta^t(s)dsdx.
	\end{align*}
	Taking the derivative of \eqref{def_L1} with respect to $t$, it holds that
	\begin{align}\label{def_L}
		\frac{d}{dt}E^{\delta}(z(t)):=\frac{d}{dt}\widetilde{E}(z(t))+\delta\phi'_1(t)+\frac{2\delta}{1-\kappa}\phi'_2(t).
	\end{align}
	\subsubsection*{Step 1.: estimate of term $\frac{d}{dt}\widetilde{E}(z(t))$}
	Multiplying the Eq. $\eqref{P}_1$ by $u_t$ and Eq. $\eqref{P}_2$ by $\eta^t$ and integrating over $\Omega$, we obtain that
	\begin{align}\label{energy_1}
		\frac{1}{2}\frac{d}{dt}\|u_t\|^2+(\eta^{t}_{t},\eta^t)_{g,V}=-\kappa(\Delta^2u,u_t)-(\Delta^2\eta^t,u_t)_{g}
		-(\eta^{t}_{s},\eta^t)_{g,V}+(u_t,\eta^t)_{g,V}.
	\end{align}
	
	We now deal with the above equality term by term. For the second term on the left-hand side of \eqref{energy_1}, integrating by part, we have
	\begin{align*}
		(\eta^{t}_{t},\eta^t)_{g,V}=\int_{0}^{\infty}g(\tau)\frac{1}{2}\frac{d}{dt}\|\Delta\eta^t(\tau)\|^2d\tau=\frac{1}{2}\frac{d}{dt}\|\eta^t\|^2_{L^2_{g}(\mathbb{R}^+;V)}.
	\end{align*}
	And similarly, for the third term on the right-hand side of \eqref{energy_1}, we find
	\begin{align*}
		-(\eta^{t}_{s},\eta^t)_{g,V}=\frac{1}{2}\int_{0}^{\infty}g'(\tau)\|\Delta\eta^t(\tau)\|^2d\tau.
	\end{align*}
	To deal with the first two terms on the right-hand side of \eqref{energy_1}, we use the Green-formula \cite[P68, Ch4, (3.20)]{Lag} to get, respectively,
	\begin{align*}
		\kappa(\Delta^2u,u_t)&=\kappa(\Delta u,\Delta u_t)+ \kappa\int_{\Gamma}\left[\frac{\partial(\Delta u)}{\partial\nu}u_t-\Delta u\frac{\partial u_t}{\partial\nu}\right]d\Gamma\\
		&=\frac{\kappa}{2}\frac{d}{dt}\|\Delta u\|^2+\kappa\int_{\Gamma}\left[\frac{\partial(\Delta u)}{\partial\nu}u_t-\Delta u\frac{\partial u_t}{\partial\nu}\right]d\Gamma,
	\end{align*}
	
	and
	
	\begin{align*}
		(\Delta^2\eta^t,u_t)_{g}=( \eta^t,u_t)_{g,V}+\int_{\Gamma}\left[\int_{0}^{\infty}g(\tau)\frac{\partial(\Delta \eta^t(\tau))}{\partial\nu}d\tau u_t-\int_{0}^{\infty}g(\tau)\Delta \eta^t(\tau)d\tau\frac{\partial u_t}{\partial\nu}\right]d\Gamma.
	\end{align*}
	Then, putting the above results into \eqref{energy_1} and taking into account boundary conditions \eqref{boundary-condition}, we obtain that
	\begin{align}\label{3}
		\frac{d}{dt}E(z(t))=\frac{1}{2}\int_{0}^{\infty}g'(s)\|\Delta\eta^t(s)\|^2ds-a(t)\|u_t\|^2_{\Gamma_1}.
	\end{align}
	Thus, from \eqref{hyp_M} and \eqref{kernel_g'}, and using that $\frac{d}{dt}\widetilde{E}(z(t))=\frac{d}{dt}E(z(t))$, it follows from \eqref{3} that
	\begin{align}\label{4}
		\frac{d}{dt}\widetilde{E}(z(t))\le -\frac{\alpha_2}{2}\|\eta^t\|^2_{L^2_g(\mathbb{R}^+;V)}-m_0\|u_t(t)\|^2_{\Gamma_1}.
	\end{align}
	\subsubsection*{Step 2.: estimate of term $\delta\phi'_1(t)$.}
	By Eq. \eqref{P} and the Green-formula, we have
	\begin{eqnarray}\label{step2-1}
		\phi'_1(t)&=&\int_{\Omega}u_t u_tdx+\int_{\Omega}uu_{tt}dx\nonumber\\
		&=&\|u_t(t)\|^2-\int_{\Omega}u\left[\kappa\Delta^2 u+\int_{0}^{\infty}g(\tau)\Delta^2\eta^t(\tau)d\tau\right]dx\nonumber\\
		&=&\|u_t(t)\|^2-\kappa\|\Delta u(t)\|^2-\underbrace{\int_{\Gamma_1}f(u)ud\Gamma}_{I}-a(t)\int_{\Gamma_1}u_tud\Gamma\nonumber\\
		&&-\int_{0}^{\infty}g(\tau)\int_{\Omega}\Delta\eta^t(\tau)\Delta u(t)dxd\tau.
	\end{eqnarray}
	From Assumption \eqref{diff2} and embedding $V\hookrightarrow L^2(\Gamma_1)$, we have
	\begin{align*}
		-I=-\int_{\Gamma_1}f(u)ud\Gamma\le -\int_{\Gamma_1}F(u)d\Gamma+\frac{c_f}{2}\|u(t)\|^2_{\Gamma_1}\le -\int_{\Gamma_1}F(u)d\Gamma+\frac{c_f\lambda_1}{2}\|\Delta u(t)\|^2.
	\end{align*}
	Then, returning to \eqref{step2-1} and using that $\varrho:=1-\frac{c_f\lambda_1}{\kappa}$, we get
	\begin{eqnarray}\label{step2-2}
		\delta\phi'_1(t)&\le&
		\delta\|u_t(t)\|^2-\frac{\delta\kappa}{2}\|\Delta u(t)\|^2-\frac{\delta\varrho\kappa}{2}\|\Delta u(t)\|^2-\delta\int_{\Gamma_1}F(u)d\Gamma\nonumber\\
		&&-\underbrace{\delta a(t)\int_{\Gamma_1}u_tud\Gamma}_{I_1}-\underbrace{\delta\int_{0}^{\infty}g(\tau)\int_{\Omega}\Delta\eta^t(\tau)\Delta u(t)dxd\tau}_{I_2}.
	\end{eqnarray}
	For one thing, Theorem \ref{theo-existence} means that $\|u(t)\|_{\Gamma_1}^2\leq c_2(B)$ for all $t\geq0$. For another, $M$ is $C^1$, which implies $M$ is uniformly bounded on $[0,c_2(B)]$, i.e., there exists $c_3(B)>0$ such that $c_3(B)=\max\{M(s);s\in [0,c_2(B)]\}$, together with the fact that
	$\|u\|^2_{\Gamma_1}\leq\lambda_1\|\Delta u\|^2$, then
	\begin{eqnarray*}
		-I_1=-\delta a(t)\int_{\Gamma_1}u_tud\Gamma&\le &\delta M(\|u(t)\|^2_{\Gamma_1})\|u_t(t)\|_{\Gamma_1}\|u(t)\|_{\Gamma_1}\\
		&\le &\delta M(\|u(t)\|^2_{\Gamma_1})\lambda_1^{1/2}\|u_t(t)\|_{\Gamma_1}\|\Delta u(t)\|_{\Gamma_1}\\
		&\le&\frac{\delta\varrho\kappa}{8}\|\Delta u(t)\|^2+\frac{2\delta\lambda_1c_3(B)^2}{\varrho\kappa}\|u_t(t)\|^2_{\Gamma_1}.
	\end{eqnarray*}
	From Young's inequality, we find
	\begin{align*}
		-I_2=-\delta\int_{0}^{\infty}g(\tau)\int_{\Omega}\Delta\eta^t(\tau)\Delta u(t)dxd\tau\leq \frac{\delta\varrho\kappa}{8}\|\Delta u(t)\|^2+\frac{2\delta(1-\kappa)}{\varrho\kappa}\|\eta^t\|^2_{L^{2}_{g}(\mathbb{R}^+;V)}.
	\end{align*}
	Using $I_1$ and $I_2$, it follows from \eqref{step2-2} that
	\begin{eqnarray}\label{step2-3}
		\delta\phi'_1(t)&\le&\delta\|u_t(t)\|^2-\frac{\delta\kappa}{2}\|\Delta u(t)\|^2-\frac{\delta\varrho\kappa}{4}\|\Delta u(t)\|^2-\delta\int_{\Gamma_1}F(u)d\Gamma_1\nonumber\\
		&&+\frac{2\delta\lambda_1c_3(B)^2}{\varrho\kappa}\|u_t(t)\|^2_{\Gamma_1}+\frac{2\delta(1-\kappa)}{\varrho\kappa}\|\eta^t\|^2_{L^{2}_{g}(\mathbb{R}^+;V)}.
	\end{eqnarray}
	\subsubsection*{Step 3.: estimate of term $\frac{2\delta}{1-\kappa}\phi'_2(t)$.}
	Using Eq. \eqref{P} as well as the Green-formula, we obtain
	\begin{eqnarray}\label{step3-1}
		\phi'_2(t)&=&-\int_{\Omega}u_{tt}\int_{0}^{\infty}g(\tau)\eta^t(\tau)d\tau dx-
		\int_{\Omega}u_{t}\int_{0}^{\infty}g(\tau)\eta^t_{t}(\tau)d\tau dx\nonumber\\
		&=&\int_{\Omega}\left[\kappa\Delta^2 u+\int_{0}^{\infty}g(\tau)\Delta^2\eta^t(\tau)d\tau\right]\int_{0}^{\infty}g(\tau)\eta^t(\tau)d\tau dx\nonumber\\
		&&\quad-\int_{\Omega}u_{t}\int_{0}^{\infty}g(\tau)\eta^t_{t}(\tau)d\tau dx\nonumber\\
		&=&-\underbrace{\int_{0}^{\infty}g(\tau)d\tau}_{=1-\kappa}\|u_t(t)\|^2+\kappa\int_{\Omega}\int_{0}^{\infty}g(\tau)\Delta\eta^t(\tau)\Delta u(t)dxd\tau
		\nonumber\\
		&&\quad+\int_{\Omega}\left[\int_{0}^{\infty}g(\tau)\Delta\eta^t(\tau)d\tau\right]^2dx+\int_{\Gamma_1}f(u)\int_{0}^{\infty}g(\tau)\eta^t(\tau)d\tau d\Gamma\nonumber\\
		&&\quad+a(t)\int_{\Gamma_1}u_t \int_{0}^{\infty}g(\tau)\eta^t(\tau)d\Gamma-\int_{0}^{\infty}g'(\tau)(\eta^t(\tau),u_t)d\tau.
	\end{eqnarray}
	Thus, from \eqref{step3-1}, we have
	\begin{align}
		\label{step3-2} 	\frac{2\delta}{(1-\kappa)}\phi'_2(t)\le-2\delta \|u_t(t)\|^2+\sum_{j=1}^{5}J_j.
	\end{align}
	We now estimate $J_1,\cdots, J_5$ one by one. First of all, by Young's inequality,
	\begin{eqnarray*}
		J_1&=&\frac{2\delta\kappa}{(1-\kappa)}\int_{0}^{\infty}g(\tau)\int_{\Omega}\Delta\eta^t(\tau)\Delta u(t)dxd\tau\\
		&\le&\frac{2\delta\kappa}{(1-\kappa)}\int_{0}^{\infty}g(\tau)\|\Delta\eta^t(\tau)\|\|\Delta u(t)\|d\tau\\
		&\le&\frac{2\delta\kappa}{(1-\kappa)^{1/2}}\|\eta^t\|_{L^2_g(\mathbb{R}^+;V)}\|\Delta u(t)\|\\
		&\le& \frac{8\delta\kappa}{\varrho(1-\kappa)}\|\eta^t\|^2_{L^2_g(\mathbb{R}^+;V)}+\frac{\delta\varrho\kappa}{8}\|\Delta u(t)\|^2.
	\end{eqnarray*}
	Applying H\"older inequality, we get
	\begin{align*}
		J_2=\frac{2\delta}{(1-\kappa)}\int_{\Omega}\left[\int_{0}^{\infty}g(\tau)\Delta\eta^t(\tau)d\tau\right]^2dx
		\le 2\delta \|\eta^t\|^2_{L^2_{g}(\mathbb{R}^+;V)}.
	\end{align*}
	From Assumption \eqref{diff}, H\"older inequality with $\frac{p}{2(p+1)}+\frac{1}{2(p+1)}+\frac{1}{2}=1$, embedding $V\hookrightarrow L^{2(p+1)}(\Gamma_1)$, and Young's inequality, we have
	\begin{eqnarray*}
		J_3&=&\frac{2\delta}{(1-\kappa)}\int_{0}^{\infty}g(\tau)\int_{\Gamma_1}f(u)\eta^t(\tau)d\Gamma d\tau\\
		&\le&\frac{2\delta C_{f'}}{(1-\kappa)}\int_{0}^{\infty}g(\tau)\int_{\Gamma_1}\left[1+|u|^p\right]|u||\eta^t(\tau)|d\Gamma d\tau\\
		&\le&\frac{2\delta C_{f'}c_4(B)}{(1-\kappa)}\int_{0}^{\infty}g(\tau)\|\Delta \eta^t(\tau)\| d\tau\|\Delta u(t)\|\\
		&\le&\frac{2\delta C_{f'}c_4(B)}{(1-\kappa)^{1/2}}\|\eta^t\|_{L^2_g(\mathbb{R}^+;V)}\|\Delta u(t)\|\\
		&\le&\frac{8\delta C^2_{f'}c_4(B)^2}{\varrho\kappa(1-\kappa)}\|\eta^t\|^2_{L^2_g(\mathbb{R}^+;V)}+\frac{\delta\varrho\kappa}{8}\|\Delta u(t)\|^2.
	\end{eqnarray*}
	Using that $M\in C^1(\mathbb{R}^+)$, \eqref{est-z}, Young's inequality, and $\|\eta^t\|^2_{\Gamma_1}\leq\lambda_3\|\Delta \eta^t\|^2$,
	\begin{eqnarray*}
		J_4&=&\frac{2\delta}{(1-\kappa)}\int_{\Gamma_1}a(t)u_t\int_{0}^{\infty}g(\tau)\eta^t(\tau)d\tau d\Gamma\\
		&\le &\frac{2\delta c_3(B)}{(1-\kappa)}\|u_t(t)\|_{\Gamma_1}\int_{0}^{\infty}g(\tau)\|\eta^t(\tau)\|_{\Gamma_1}d\tau\\
		&\le &\frac{2\delta c_3(B)\lambda_3^{1/2}}{(1-\kappa)}\|u_t(t)\|_{\Gamma_1}\int_{0}^{\infty}g(\tau)\|\Delta \eta^t(\tau)\|d\tau\\
		&\le &\frac{2\delta c_3(B)\lambda_3^{1/2}}{(1-\kappa)^{1/2}}\|u_t(t)\|_{\Gamma_1}\|\eta^t\|_{L^2_g(\mathbb{R}^+,V)}\\
		&\le & \frac{\delta c_3(B)\lambda_3^{1/2}}{(1-\kappa)^{1/2}}\left[\|u_t(t)\|^2_{\Gamma_1}+\|\eta^t\|^2_{L^2_g(\mathbb{R}^+;V)}\right].
	\end{eqnarray*}
	Finally, from Assumption \eqref{kernel_g'}, embedding $V\hookrightarrow L^{2}(\Omega)$, and Young's inequality, we arrive at
	\begin{eqnarray*}
		J_5&=&-\frac{2\delta}{(1-\kappa)}\int_{0}^{\infty}g'(\tau)(\eta^t(\tau),u_t)d\tau\\
		&\le& \frac{2\delta \alpha_1}{(1-\kappa)}\int_0^{\infty}g(\tau)\|\eta^t(\tau)\|\|u_t(t)\|d\tau\\
		&\le& \frac{2\delta \alpha_1\lambda_2^{1/2}}{(1-\kappa)^{1/2}}\|\eta^t\|_{L^2_g(\mathbb{R}^+;V)}\|u_t(t)\|\\
		&\le& \frac{2\delta\alpha_1^2\lambda_2}{1-\kappa}\|\eta^t\|^2_{L^2_g(\mathbb{R}^+;V)}+\frac{\delta}{2}\|u_t(t)\|^2.
	\end{eqnarray*}
	Thus, from the estimates of terms $J_1,\cdots,J_5$, it follows from \eqref{step3-2} that
	\begin{eqnarray}
		\label{step3-3} \frac{2\delta}{(1-\kappa)}\phi'_2(t)&\le&-\frac{3\delta}{2} \|u_t(t)\|^2+\frac{\delta\varrho\kappa}{4}\|\Delta u(t)\|^2+\frac{\delta c_3(B)\lambda_3^{1/2}}{(1-\kappa)^{1/2}}\|u_t(t)\|^2_{\Gamma_1}\nonumber\\
		&&+\delta c_5(B)\|\eta^t\|^2_{L^2_g(\mathbb{R}^+;V)},
	\end{eqnarray}
	where $$c_5(B):=\frac{8\kappa}{\varrho(1-\kappa)}+2+\frac{8C^2_{f'}c_4(B)^2}{\varrho\kappa(1-\kappa)}+\frac{c_3(B)\lambda_3^{1/2}}{(1-\kappa)^{1/2}}+ \frac{2\alpha_1^2\lambda_2}{1-\kappa}.$$
\subsubsection*{Completion of the proof}
	Thus, using estimates \eqref{4}, \eqref{step2-3}, and \eqref{step3-3}, it follows from \eqref{def_L} that
	\begin{eqnarray}\label{step4-1}
		\frac{d}{dt}E^{\delta}(z(t))&\le&-\frac{\delta}{2}\|u_t(t)\|^2-\frac{\delta\kappa}{2}\|\Delta u(t)\|^2-\left[\frac{\alpha_2}{2}-\delta \left(\frac{2(1-\kappa)}{\varrho\kappa}+c_5(B)\right)\right]\|\eta^t\|^2_{L^2_g(\mathbb{R}^+;V)}\nonumber\\
		&&-\delta\int_{\Gamma_1}F(u)d\Gamma-\left[m_0-\delta\left(\frac{2\lambda_1c_3(B)^2}{\varrho\kappa}+\frac{c_3(B)\lambda_3^{1/2}}{(1-\kappa)^{1/2}}\right)\right]\|u_t(t)\|^2_{\Gamma_1}.
	\end{eqnarray}
	Now, choosing a $\delta>0$ conveniently small such that
	$$\frac{\alpha_2}{2}-\delta \left(\frac{2(1-\kappa)}{\varrho\kappa}+c_5(B)\right)\ge \frac{\delta}{2}\quad \mbox{and}\quad m_0-\delta\left(\frac{2\lambda_1c_3(B)^2}{\varrho\kappa}+\frac{c_3(B)\lambda_3^{1/2}}{(1-\kappa)^{1/2}}\right)\ge 0,$$
	from \eqref{step4-1}, we obtain that
	\begin{eqnarray}\label{step4-2}
		\frac{d}{dt}E^{\delta}(z(t))&\le&-\delta E(z(t)).
	\end{eqnarray}
	Using the definition of $\widetilde{E}$ we obtain from \eqref{step4-2} that
	\begin{eqnarray}\label{step4-3}
		\frac{d}{dt}E^{\delta}(z(t))&\le&-\delta \widetilde{E}(z(t))+\delta C_fmeas(\Gamma_1).
	\end{eqnarray}
	\paragraph{Affirmation: $E^{\delta}(z(t))\sim \widetilde{E}(z(t))$} Indeed,
	\begin{eqnarray*}
		\left|E^{\delta}(z(t))-\widetilde{E}(z(t))\right|\le \delta|\phi_1'(t)|+\frac{2\delta}{(1-\kappa)}|\phi'_2(t)|.
	\end{eqnarray*}
	From embedding $V\hookrightarrow L^2(\Omega)$ and \eqref{s'}, we have
	\begin{align*}
		|\phi_1(t)|\le \lambda_0^{1/2}\|u_t(t)\|\|\Delta u(t)\|\le \frac{2\lambda_0^{1/2}}{\varrho\kappa}\widetilde{E}(z(t)),
	\end{align*}
	and, similarly,
	\begin{eqnarray*}
		|\phi_2(t)|&\le&\lambda_2^{1/2}\|u_t(t)\|\int_{0}^{\infty}g(\tau)\|\Delta\eta^t(\tau)\|d\tau\\
		&\le&\lambda_2^{1/2}(1-\kappa)^{1/2}\|u_t(t)\|\|\eta^t\|_{L^2_g(\mathbb{R}^+;V)}\le\frac{ 2\lambda_2^{1/2}(1-\kappa)^{1/2}}{\rho}\widetilde{E}(z(t)).
	\end{eqnarray*}
	Considering the last two estimates we obtain that
	\begin{eqnarray*}
		\left|E^{\delta}(z(t))-\widetilde{E}(z(t))\right|\le \delta C_0\widetilde{E}(z(t)),
	\end{eqnarray*}
	where $$C_0:=\frac{2\lambda_0^{1/2}}{\varrho\kappa}+\frac{ 2\lambda_2^{1/2}(1-\kappa)^{1/2}}{\rho}.$$
	Still choosing a small $\delta$ such that $\delta\le \frac{1}{2C_0}$, we obtain that
	\begin{align}\label{equivE}
		\frac{1}{2}\widetilde{E}(z(t))\le E^{\delta}(z(t))\le \frac{3}{2}\widetilde{E}(z(t)).
	\end{align}
	Which proves that $E^{\delta}\sim \widetilde{E}$.
	
	Hence, using \eqref{equivE}, from \eqref{step4-3}, we have
	\begin{eqnarray}\label{step4-4}
		\frac{d}{dt}E^{\delta}(z(t))+\frac{2\delta}{3}E^{\delta}(z(t))\le \delta C_fmeas(\Gamma_1).
	\end{eqnarray}
	Applying Gronwall's lemma in \eqref{step4-4}, one gets
	\begin{eqnarray*}
		E^{\delta}(z(t))\le e^{-\frac{2\delta}{3}t}E^{\delta}(z(0))+\frac{3}{2}C_fmeas(\Gamma_1).
	\end{eqnarray*}
	Again using \eqref{equivE}, we get
	\begin{eqnarray*}
		\widetilde{E}(z(t))\le 2e^{-\frac{2\delta}{3}t}\widetilde{E}(z(0))+3C_fmeas(\Gamma_1).
	\end{eqnarray*}
	Since $\widetilde{E}(z(0)\le c_6(B)$, from \eqref{s}, we obtain that
	\begin{eqnarray*}
		||z(t)||^2_{\mathcal{H}}\le \frac{4}{\varrho}e^{-\frac{2\delta}{3}t}\widetilde{E}(z(0))+\frac{6C_fmeas(\Gamma_1)}{\varrho}.
	\end{eqnarray*}
	and
	$$\lim\sup_{t\rightarrow \infty}||z(t)||^2_{\mathcal{H}}\le \frac{6C_fmeas(\Gamma_1)}{\varrho}=:R^2.$$
	This implies that the closed ball $\mathcal{B}=\overline{B}_{\mathcal{H}}(0,R)$ is a bounded absorbing sets of dynamical system $(\mathcal{H},S_t)$. The proof is complete.
\end{proof}

\subsection{Gradient system}
\begin{proposition} {\bf (Gradient system)}\label{Prop-grad-syst}
	Under the assumptions of Theorem \ref{theo-main}, the energy functional  $E:\mathcal{H}\to \mathbb{R}$ associated with problem \eqref{P}-\eqref{initial-condition0}
	$$E(S_tz):=\frac{1}{2}||S_tz||^2_{\mathcal{H}}+\int_{\Gamma_1}F(u)d\Gamma,\quad z=(u,u_t,\eta^t),$$
	is a strict Lyapunov functional for the dynamical system $(\mathcal{H},S_t)$. Which means, $(\mathcal{H},S_t)$ is a gradient dynamical system.
\end{proposition}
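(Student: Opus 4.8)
The plan is to check the two properties that define a strict Lyapunov functional in the sense of \cite{chueshov-yellow}: that $t\mapsto E(S_tz)$ is non-increasing for every $z\in\mathcal{H}$, and that if $E(S_tz)=E(z)$ for all $t\ge 0$ then $z$ is a stationary point. As a preliminary step I would record that $E$ is continuous on $\mathcal{H}$; the quadratic part $\frac{1}{2}\|\cdot\|^2_{\mathcal{H}}$ is trivially continuous, and the boundary term $u\mapsto\int_{\Gamma_1}F(u)\,d\Gamma$ is continuous from $V$ into $\mathbb{R}$ because \eqref{diff} yields the growth bound $|F(u)|\le C(1+|u|^{p+2})$, which combined with the trace-Sobolev embeddings \eqref{imbeddings} makes the associated Nemytskii map continuous.

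For the monotonicity I would first work with regular solutions, where the energy identity \eqref{3} is available. Since $g'\le-\alpha_2 g\le 0$ by \eqref{kernel_g'} and $a(t)=M(\|u(t)\|^2_{\Gamma_1})\ge m_0>0$ by \eqref{hyp_M}, the right-hand side of \eqref{3} is nonpositive, so $\frac{d}{dt}E(z(t))\le 0$; integrating over $[s,t]$ gives the dissipation inequality
$$E(z(t))+\int_s^t\Big[\tfrac{\alpha_2}{2}\|\eta^\tau\|^2_{L^2_g(\mathbb{R}^+;V)}+m_0\|u_\tau\|^2_{\Gamma_1}\Big]\,d\tau\le E(z(s)),\qquad 0\le s\le t.$$
To pass this to an arbitrary generalized (weak) solution $z$, I would take the regular approximants $z^\mu\to z$ in $C([0,T];\mathcal{H})$ built in the proof of Theorem \ref{theo-existence}: the endpoint terms converge by continuity of $E$, the memory dissipation converges because the $\mathcal{H}$-norm controls $\|\eta^\tau\|_{L^2_g(\mathbb{R}^+;V)}$ uniformly in $\tau$, and the boundary dissipation $\|u_\tau\|^2_{\Gamma_1}$ survives the limit by weak lower semicontinuity using the a priori bound \eqref{bounded-2}. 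This yields the same inequality for every $z\in\mathcal{H}$ and in particular shows $t\mapsto E(S_tz)$ is non-increasing.

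For strictness, assume $E(S_tz)=E(z)$ for all $t\ge 0$. Substituting into the dissipation inequality forces $\int_0^t[\tfrac{\alpha_2}{2}\|\eta^\tau\|^2_{L^2_g(\mathbb{R}^+;V)}+m_0\|u_\tau\|^2_{\Gamma_1}]\,d\tau\le 0$, and since the integrand is nonnegative it must vanish a.e., giving $\eta^\tau\equiv 0$ in $L^2_g(\mathbb{R}^+;V)$ for all $\tau$. Feeding this into the history equation $\eqref{P}_2$, written as $u_t=\eta^t_t+\eta^t_s$, yields $u_t\equiv 0$; hence $u$ is constant in time and the whole trajectory reduces to $z(t)=(u_0,0,0)$, so $z\in\mathcal{N}$. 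This is exactly the strictness requirement, and together with continuity and monotonicity it makes $E$ a strict Lyapunov functional, so $(\mathcal{H},S_t)$ is gradient. I expect the main obstacle to be the passage from regular to weak solutions in the dissipation inequality: while the memory term converges under the $\mathcal{H}$-topology, the boundary dissipation $\|u_\tau\|^2_{\Gamma_1}$ is not controlled by that topology and must be handled by lower semicontinuity rather than by a direct limit.
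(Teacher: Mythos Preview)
Your proof is correct and follows essentially the same route as the paper: use the energy identity \eqref{3} to obtain $\frac{d}{dt}E(S_tz)\le 0$, and when $E$ is constant along a trajectory conclude $\eta^t\equiv 0$ in $L^2_g(\mathbb{R}^+;V)$, then read off $u_t\equiv 0$ from $\eqref{P}_2$. The only difference is that you are more careful than the paper about the passage from regular to generalized solutions (continuity of $E$, approximation, lower semicontinuity of the boundary dissipation), whereas the paper simply asserts the energy identity \eqref{grad-1} for arbitrary $z\in\mathcal{H}$ without further comment.
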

\begin{proof}
	We consider $z\in \mathcal{H}$ and set $(u (t),u_t(t),\eta^t)=S_tz$, for all $t\ge 0$. From \eqref{A1} and \eqref{kernel_g'}, we have
	\begin{equation}\label{grad-1}
		\frac{d}{dt}E(S_tz)=\frac{1}{2}\int_{0}^{\infty}g'(s)\|\Delta\eta^t(s)\|^2ds-a(t)\|u_t\|^2_{\Gamma_1}\leq0,
	\end{equation}
	which means that the function $t\mapsto E(S_tz)$ is a non-increasing function, for any $z\in \mathcal{H}$. Now, we suppose that for some initial data $z_0=(u_0,u_1,\eta_0)\in \mathcal{H}$ one has
	\begin{align*}
		E(S_tz_0)= E(z_0),\quad \forall   t> 0.
	\end{align*}
	Then, it follows from \eqref{grad-1} that
	\begin{align}
		-\frac{1}{2}\int_{0}^{\infty}g'(s)\|\Delta\eta^t(s)\|^2ds+a(t)\|u_t\|^2_{\Gamma_1}=0,\quad \forall t\ge 0.
	\end{align}
	Using that both terms have same sign, from \eqref{kernel_g'}, we conclude that
	\begin{align}
		0=-\frac{1}{2}\int_{0}^{\infty}g'(s)\|\Delta\eta^t(s)\|^2ds\ge \frac{\alpha_2}{2}\|\eta^t\|^2_{L^2_g(\mathbb{R}^+;V)}\ge 0.
	\end{align}
	This implies that $\eta^t(x,s)=0$ a.e. in $\Omega$, $t,s\ge 0$. Hence, from $\eqref{P}_2$, we have $u_t(x,t)=0$ a.e. in $\Omega$, $t\ge 0$. Therefore, $S_t(u_0,u_1,\eta_0)=(u_0,u_1,\eta_0)$ is a fix point of $S_t$. Consequently, $E(z)$ is a strict Lyapunov functional.
\end{proof}
\begin{remark}
	Readers are referred to \cite[Definition 2.4.1, P80]{chueshov-yellow} for more details about gradient systems.
\end{remark}
\subsection{Stabilizability estimate}
Now, we are in the position to investigate the stabilizability of dynamical systems generated by \eqref{P}-\eqref{initial-condition0}.
\begin{proposition}\label{prop-stab-ineq}
	Let us assume that the hypotheses of Theorem \ref{theo-existence} are valid. Given a bounded set $B=\{U\in \mathcal{H}|\,||U||_{\mathcal{H}}\le R\}\subset \mathcal{H}$, let $S_tz^1_0=z^1(t) =
	(u(t),u_t(t),\eta^t)$ and $S_tz^2_0=z^2(t)=(v(t),v_t(t),\xi^t)$ be two weak solutions of problem \eqref{P}-\eqref{initial-condition0} such that $z^1(0)=z^1_0=(u_0,u_1,\eta_0)$
	and $z^2(0) = z^2_0=(v_0, v_1,\xi_0)$ are in $B$.  Then, there exist positive constants $\epsilon, C_{R,\epsilon}$ such that
	\begin{align}\label{inequality-stab}	||S_tz^1_0-S_tz^2_0||^2_{\mathcal{H}}\le 3e^{-\frac{\epsilon}{3}t}||z^1_0-z^2_0||^2_{\mathcal{H}}+C_{R,\epsilon}\int_0^te^{-\frac{\epsilon}{3}(t-s)}\|u(s)-v(s)\|^2_{2(p+1),\Gamma_1}ds.
	\end{align}
\end{proposition}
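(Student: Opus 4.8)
The plan is to follow the same route used for the absorbing set in Proposition \ref{absorbing-set}, but applied now to the difference of two trajectories, constructing a perturbed energy that is equivalent to the $\mathcal{H}$-norm and decays exponentially up to the lower-order boundary term $\|w\|^2_{2(p+1),\Gamma_1}$. Writing $w=u-v$, $\zeta^t=\eta^t-\xi^t$ and $z=z^1-z^2=(w,w_t,\zeta^t)$, the pair $(w,\zeta^t)$ solves a system of the same type as \eqref{PD}--\eqref{initial-condition11}, with nonlinear boundary forcing $\widehat f(w)=f(u)-f(v)$ and the non-monotone remainder coming from $M(\|u\|^2_{\Gamma_1})u_t-M(\|v\|^2_{\Gamma_1})v_t$. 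First I would reproduce the basic energy identity \eqref{asympt-12}: multiplying the difference equations by $w_t$ and $\zeta^t$, using $(\zeta^t_s,\zeta^t)_{g,V}\ge\frac{\alpha_2}{2}\|\zeta^t\|^2_{L^2_g(\mathbb{R}^+;V)}$ from \eqref{kernel_g'} and $M(\|u\|^2_{\Gamma_1})\|w_t\|^2_{\Gamma_1}\ge m_0\|w_t\|^2_{\Gamma_1}$ from \eqref{hyp_M}, and bounding the right-hand side exactly as for $L_1,L_2$. By the chain $V\hookrightarrow L^{2(p+1)}(\Gamma_1)\hookrightarrow L^2(\Gamma_1)$ these remainders are absorbed into the good dissipation and a multiple of $\|w\|^2_{2(p+1),\Gamma_1}$, yielding
\[
\frac{d}{dt}\|z\|^2_{\mathcal{H}}+\alpha_2\|\zeta^t\|^2_{L^2_g(\mathbb{R}^+;V)}+m_0\|w_t\|^2_{\Gamma_1}\le C_R\|w\|^2_{2(p+1),\Gamma_1}.
\]

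This identity alone is insufficient, since its dissipation controls only $\zeta^t$ and the \emph{boundary} velocity, not the interior terms $\kappa\|\Delta w\|^2+\|w_t\|^2$. To recover them I would introduce the auxiliary functionals
\[
\Phi_1(t)=\int_\Omega w\,w_t\,dx,\qquad \Phi_2(t)=-\int_\Omega w_t\int_0^\infty g(s)\zeta^t(s)\,ds\,dx,
\]
mirroring $\phi_1,\phi_2$ in \eqref{def_L1}. Differentiating along the difference system and applying the Green formula with the boundary conditions \eqref{boundary-condition2}, $\Phi_1'$ produces the coercive term $-\kappa\|\Delta w\|^2$ (together with $+\|w_t\|^2$), while $\frac{2\delta}{1-\kappa}\Phi_2'$ produces a negative multiple of $\|w_t\|^2$; all memory cross terms are handled by Young's inequality as in Steps 2--3 of Proposition \ref{absorbing-set}. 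The nonlinear contributions $\int_{\Gamma_1}\widehat f(w)w\,d\Gamma$ and the $M$-difference terms are estimated through \eqref{diff}, H\"older's inequality with the exponents $\frac{p}{2(p+1)}+\frac{1}{2(p+1)}+\frac12=1$, and the trace embedding, so that they too are dominated by $C_R\|w\|^2_{2(p+1),\Gamma_1}$ plus a controllable fraction of $m_0\|w_t\|^2_{\Gamma_1}$.

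Setting $\mathcal{E}^\delta_z(t)=\frac12\|z(t)\|^2_{\mathcal{H}}+\delta\Phi_1(t)+\frac{2\delta}{1-\kappa}\Phi_2(t)$ and choosing $\delta>0$ small, I would first verify the equivalence $\mathcal{E}^\delta_z\sim\|z\|^2_{\mathcal{H}}$ (as in \eqref{equivE}) and then combine the above estimates to obtain, for some $\epsilon>0$,
\[
\frac{d}{dt}\mathcal{E}^\delta_z(t)+\frac{\epsilon}{3}\,\mathcal{E}^\delta_z(t)\le C_{R,\epsilon}\|w(t)\|^2_{2(p+1),\Gamma_1}.
\]
Integrating this via the factor $e^{\epsilon t/3}$ and returning to the $\mathcal{H}$-norm through the equivalence constants yields \eqref{inequality-stab}, the factor $3$ and the rate $\epsilon/3$ being precisely the artifacts of that norm equivalence.

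The main obstacle is the non-monotone boundary dissipation: the $M$-difference splits into the good part $M(\|u\|^2_{\Gamma_1})w_t$ and the bad part $[M(\|u\|^2_{\Gamma_1})-M(\|v\|^2_{\Gamma_1})]v_t$, and the latter must be reabsorbed using the good part $m_0\|w_t\|^2_{\Gamma_1}$ itself. This requires the uniform bound $\|u(t)\|_{\Gamma_1},\|v(t)\|_{\Gamma_1}\le c(R)$ on the absorbing set (so that $M$ and $M'$ are uniformly bounded along the trajectories) and a sufficiently small $\delta$ so that the coefficients of $\|w_t\|^2_{\Gamma_1}$ and $\|\Delta w\|^2$ in the final inequality remain nonnegative. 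Verifying that all nonlinear and memory cross terms genuinely collapse into the single seminorm $\|w\|^2_{2(p+1),\Gamma_1}$ — with no leftover coercive quantity on the right-hand side — is the delicate bookkeeping on which the quasi-stability of $(\mathcal{H},S_t)$ ultimately rests.
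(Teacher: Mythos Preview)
Your proposal is correct and follows essentially the same route as the paper: the perturbed energy $\mathcal{E}^\delta_z=\tfrac12\|z\|^2_{\mathcal{H}}+\delta\Phi_1+\tfrac{2\delta}{1-\kappa}\Phi_2$ coincides with the paper's functional $G(t)$ (with $\varphi_1,\varphi_2$ in place of your $\Phi_1,\Phi_2$), and your three-step estimation, the handling of the non-monotone $M$-term by splitting off $M(\|u\|^2_{\Gamma_1})w_t$, the norm equivalence $\tfrac12\|z\|^2_{\mathcal{H}}\le G\le\tfrac32\|z\|^2_{\mathcal{H}}$, and the final Gronwall argument all match the paper's proof.
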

\begin{proof}
	Let $w = u-v$, $\zeta^t=\eta^t-\xi^t$. Then $z^1(t)-z^2(t)=(w(t),w_t(t),\zeta^t)$
	satisfies the problem \eqref{PD}-\eqref{initial-condition11}. In order, we consider the functional
	$$G(t)=\frac{1}{2}||S_tz^1_0-S_tz^2_0||^2_{\mathcal{H}}+\epsilon \varphi_1(t)+\frac{2\epsilon}{1-\kappa}\varphi_2(t),$$
	where $\epsilon>0$, and $$\varphi_1(t):=\int_{\Omega}w_t wdx\quad\mbox{and}\quad \varphi_2(t):=-\int_{\Omega}w_t(t)\int_0^{\infty}g(\tau)\zeta^t(\tau)d\tau.$$
	\paragraph{ Affirmation $G(t)\approx  \frac{1}{2}||S_tz^1_0-S_tz^2_0||^2_{\mathcal{H}}$.} Indeed, from embedding $V\hookrightarrow L^2(\Omega)$, we have
	\begin{eqnarray*}
		\left|G(t)-\frac{1}{2}||S_tz^1_0-S_tz^2_0||^2_{\mathcal{H}}\right|&\le& \epsilon\|w_t(t)\|\|w(t)\|+\frac{2\epsilon}{1-\kappa}\int_{0}^{\infty}g(\tau)\|\zeta^t(\tau)\|\|w_t(t)\|d\tau\\
		&\le&\epsilon\lambda_0^{1/2}\|w_t(t)\|\|\Delta w(t)\|+\frac{2\epsilon\lambda_2^{1/2}}{1-\kappa}\int_{0}^{\infty}g(\tau)\|\Delta\zeta^t(\tau)\|\|w_t(t)\|d\tau\\
		&\le&\epsilon\lambda_0^{1/2}\|w_t(t)\|\|\Delta w(t)\|+\epsilon\lambda_2^{1/2}\|w_t(t)\|^2+\frac{\epsilon\lambda_2^{1/2}}{1-\kappa}\|\zeta^t\|^2_{L^2_g(\mathbb{R}^+;V)}\\
		&\le&\epsilon\max\{\lambda_0^{1/2},\lambda_2^{1/2}\}\left(1+\frac{1}{4\kappa}+\frac{1}{1-\kappa}\right)||S_tz^1_0-S_tz^2_0||^2_{\mathcal{H}}.
	\end{eqnarray*}
	Hence, from the above inequality, we can choose $\epsilon>0$ appropriate such that
	\begin{align}\label{equiv} \frac{1}{2}||S_tz^1_0-S_tz^2_0||^2_{\mathcal{H}}\le G(t)\le \frac{3}{2}||S_tz^1_0-S_tz^2_0||^2_{\mathcal{H}}.
	\end{align}
	Taking the derivative of $G(t)$ with respect to $t$, we get
	\begin{eqnarray}
		\label{asymp-0}\frac{d}{dt}G(t)=\frac{1}{2}\frac{d}{dt}||S_tz^1_0-S_tz^2_0||^2_{\mathcal{H}}+\epsilon\varphi'_1(t)+\frac{2\epsilon}{1-\kappa}\varphi'_2(t).
	\end{eqnarray}
	In what follows we will estimate the terms on the right-hand side of \eqref{asymp-0}. This will be accomplished in three steps.
	\subsubsection*{Step 1.: estimate of term $\frac{1}{2}\frac{d}{dt}||S_tz^1_0-S_tz^2_0||^2_{\mathcal{H}}$.}
	First of all, multiplying equation $\eqref{P}_1$ by $w_t$, equation $\eqref{P}_2$ by $\zeta^t$, integrating over $\Omega$, and by arguments similar to those used in the proof of the Theorem \ref{theo-existence}-${\bf II.}$, we obtain
	\begin{align}	\label{asymp-2}
		\frac{1}{2}\frac{d}{dt}||S_tz^1_0-S_tz^2_0||^2_{\mathcal{H}}\le -\frac{\alpha_2}{2}\|\zeta^t\|^2_{L^2_g(\mathbb{R}^+;V)}-\frac{m_0}{2}\|w_t(t)\|^2_{\Gamma_1}+ C_R\left[\,\|w(t)\|^2_{\Gamma_1}+\|w(t)\|^2_{2(p+1),\Gamma_1}\,\right].
	\end{align}
	\subsubsection*{Step 2.: estimate of term $\epsilon\varphi'_1(t)$.}
	Next, differentiating $\varphi_1$ with respect to $t$, and substituting
	$$w_{tt}=-\kappa \Delta^2 w-\int_0^{\infty}g(\tau)\Delta^2\zeta^t(\tau)d\tau,$$
	in the obtained expression, it holds that
	\begin{align}\label{asymp-3}
		\varphi'_1(t)=\|w_t(t)\|^2-\kappa\|\Delta w(t)\|^2-\underbrace{\int_{\Gamma}(\frac{\partial(\Delta w)}{\partial \nu}w-\frac{\partial w}{\partial\nu}\Delta w)d\Gamma-\int_{\Omega}w\int_0^{\infty}g(\tau)\Delta^2\zeta^t(\tau)d\tau dx}_{J}.
	\end{align}
	From Green-formula and boundary conditions \eqref{boundary-condition2}, we can rewrite
	\begin{eqnarray*}
		J&=&\underbrace{-\int_{\Gamma_1}\left[\,f(u)-f(v)\,\right]wd\Gamma}_{J'}-\underbrace{\int_{\Gamma_1}\left[\,a(u,t)u_t-a(v,t)v_t\,\right]wd\Gamma}_{J''}\nonumber\\
		&&-\underbrace{\int_0^{\infty}g(\tau)\int_{\Omega}\Delta \zeta^t(\tau)\Delta w(t)dxd\tau}_{J'''}.
	\end{eqnarray*}
	From Assumption \eqref{diff}, H\"older inequality with $\frac{p}{p+2}+\frac{2}{p+2}=1$, and embedding $V\hookrightarrow L^{p+2}(\Gamma_1)$, we have
	\begin{eqnarray*}
		J'\le C_R\|w(t)\|^2_{p+2,\Gamma_1}.
	\end{eqnarray*}
	Using that $M\in C^1(\mathbb{R}^+)$, Mean Value Theorem, and Young's inequality, we have
	\begin{eqnarray*}
		J''&=&-M(\|u(t)\|^2_{\Gamma_1})\int_{\Gamma_1}w_twd\Gamma-\left[\,M(\|u(t)\|^2_{\Gamma_1})-M(\|v(t)\|^2_{\Gamma_1})\,\right]\int_{\Gamma_1}v_twd\Gamma\\
		&\le& C_R\|w_t(t)\|_{\Gamma_1}\|w(t)\|_{\Gamma_1}+C_R\|w(t)\|^2_{\Gamma_1}\\
		&\le&C_R\|w(t)\|^2_{\Gamma_1}+\frac{m_0}{4}\|w_t(t)\|^2_{\Gamma_1}.
	\end{eqnarray*}
	Finally, from Young's inequality
	\begin{eqnarray*}
		J'''&\le&\int_{0}^{\infty}g(\tau)\|\Delta \zeta^t(\tau)\|\|\Delta w(t)\|d\tau\\
		&\le &\int_0^{\infty}g(\tau)\left[\,\frac{(1-\kappa)}{\kappa}\|\Delta \zeta^t(\tau)\|^2+\frac{\kappa}{4(1-\kappa)}\|\Delta w(t)\|^2\,\right]d\tau\\
		&=&\frac{(1-\kappa)}{\kappa}\|\zeta^t\|^2_{L^{2}_g(\mathbb{R}^+;V)}+\frac{\kappa}{4}\|\Delta w(t)\|^2.
	\end{eqnarray*}
	Returning to \eqref{asymp-3} and using $L^{2(p+1)}(\Gamma_1)\hookrightarrow L^{p+2}(\Gamma_1)\hookrightarrow L^{2}(\Gamma_1)$, we get
	\begin{align}\label{asymp-4}
		\epsilon\varphi'_1(t)\le\;&\epsilon\|w_t(t)\|^2-\frac{3\kappa\epsilon}{4}\|\Delta w(t)\|^2+\frac{m_0\epsilon}{4}\|w_t(t)\|^2_{\Gamma_1}
		+\frac{(1-\kappa)\epsilon}{\kappa}\|\zeta^t\|^2_{L^2_g(\mathbb{R}^+;V)}\nonumber\\
		&+\epsilon C_R\|w(t)\|^2_{2(p+1),\Gamma_1}.
	\end{align}
	\subsubsection*{Step 3.: estimate of term $\frac{2\epsilon}{1-\kappa}\varphi'_2(t)$.}
	Now, taking the derivative of $\varphi_2(t)$ with respect to $t$ and again using
	$$w_{tt}=-\kappa \Delta^2 w-\int_0^{\infty}g(\tau)\Delta^2\zeta^t(\tau)d\tau$$ and boundary conditions \eqref{boundary-condition2}, we obtain
	\begin{eqnarray}\label{asymp-5}
		\frac{2\epsilon}{(1-\kappa)}\varphi_2'(t)=\sum_{j=1}^5J_j-2\epsilon\|w_t(t)\|^2,
	\end{eqnarray}
	where
	\begin{eqnarray*}
		J_1&=&\frac{2\epsilon\kappa}{1-\kappa}\int_0^{\infty}g(\tau)\int_{\Omega}\Delta \zeta^t(\tau)\Delta w(t)dxd\tau,\\
		J_2&=&	\frac{2\epsilon}{(1-\kappa)}\int_{\Omega}\left(\int_0^{\infty}g(\tau)\Delta \zeta^t(\tau)d\tau\right)^2dx,\\
		J_3&=&	\frac{2\epsilon}{(1-\kappa)}\int_{\Gamma_1}\left[\,f(u)-f(v)\,\right]\int_0^{\infty}g(\tau)\zeta^t(\tau)d\tau d\Gamma,\\
		J_4&=&	\frac{2\epsilon}{(1-\kappa)}\int_{\Gamma_1}\left[\,a_u(t)u_t-a_v(t)v_t\,\right]\int_0^{\infty}g(\tau)\zeta^t(\tau)d\tau d\Gamma,\\
		J_5&=&-	\frac{2\epsilon}{(1-\kappa)}\int_0^{\infty}g'(\tau)\int_{\Omega}\zeta^t(\tau)w_t(t)dxd\tau.
	\end{eqnarray*}
	Now let's estimate the terms $J_1,\cdots,J_5$. First, from Young's inequality, we have
	\begin{eqnarray*}
		J_1&\le& \frac{2\epsilon\kappa}{1-\kappa}\int_0^{\infty}g(\tau)\|\Delta \zeta^t(\tau)\|\|\Delta w(t)\|d\tau\\
		&\le& \frac{2\epsilon\kappa}{1-\kappa}\int_0^{\infty}g(\tau)\left[\,4\|\Delta \zeta^t(\tau)\|^2+\frac{1}{16}\|\Delta w(t)\|^2\,\right]d\tau\\
		&\le&\frac{8\epsilon\kappa}{1-\kappa}\|\zeta^t\|^2_{L^2_g(\mathbb{R}^+;V)}+\frac{\epsilon\kappa}{8}\|\Delta w(t)\|^2.
	\end{eqnarray*}
	From H\"older inequality, it's easy to see that
	\begin{eqnarray*}
		J_2&\le& \frac{2\epsilon}{(1-\kappa)}\int_{\Omega}\left(\int_0^{\infty}g(\tau)\Delta \zeta^t(\tau)d\tau\right)^2dx\le 2\epsilon\|\zeta^t\|^2_{L^2_g(\mathbb{R}^+;V)}.
	\end{eqnarray*}
	Using Assumption \eqref{diff}, H\"older inequality with $\frac{p}{2(p+1)}+\frac{1}{2(p+1)}+\frac{1}{2}=1$, and embedding $V\hookrightarrow L^2(\Gamma_1)$, we have
	\begin{eqnarray*}
		J_3&=&	\frac{2\epsilon C_{f'}}{(1-\kappa)}\int_0^{\infty}g(\tau)\int_{\Gamma_1}\left[\,1+(|u|+|v|)^p\,\right]|w||\zeta^t(\tau)|d\Gamma d\tau \\
		&\le&\frac{4\epsilon C_{f'}}{(1-\kappa)}\int_0^{\infty}g(\tau)\left(\int_{\Gamma_1}\left[\,1+(|u|+|v|)^{2(p+1)}\,\right]d\Gamma\right)^{\frac{p}{2(p+1)}}\|w(t)\|_{2(p+1),\Gamma_1}\|\zeta^t(\tau)\|_{\Gamma_1} d\tau\\
		&\le&\epsilon C_R\|w(t)\|^2_{2(p+1),\Gamma_1}+\epsilon\|\zeta^t\|^2_{L^2_g(\mathbb{R}^+;V)}.
	\end{eqnarray*}
	Using that $M\in C^1(\mathbb{R}^+)$, embedding $V\hookrightarrow L^{2}(\Gamma_1)$, and Young's inequality, we have
	\begin{eqnarray*}
		J_4&=&	\frac{2\epsilon}{(1-\kappa)}\int_{\Gamma_1}\left[\,a_u(t)u_t-a_v(t)v_t\,\right]\int_0^{\infty}g(\tau)\zeta^t(\tau)d\tau d\Gamma\\
		&=&	\frac{2\epsilon}{(1-\kappa)}\int_{\Gamma_1}a_u(t)w_t(t)\int_0^{\infty}g(\tau)\zeta^t(\tau)d\tau\\
		&&+\frac{2\epsilon}{(1-\kappa)}\int_{\Gamma_1}\left[\,a_u(t)-a_v(t)\,\right]v_t(t)\int_0^{\infty}g(\tau)\zeta^t(\tau)d\tau\\
		&\le&	\epsilon C_R\int_0^{\infty}g(\tau)\|\zeta^t(\tau)\|_{\Gamma_1}\|w_t(t)\|_{\Gamma_1}d\tau+\epsilon C_R\int_0^{\infty}g(\tau)\|\zeta^t(\tau)\|_{\Gamma_1}\|w(t)\|_{\Gamma_1}d\tau\\
		&\le&\frac{\epsilon m_0}{4} \|w_t(t)\|^2_{\Gamma_1}+\frac{\epsilon \kappa}{8}\|\Delta w(t)\|^2+\epsilon C_R\|\zeta^t\|^2_{L^2_g(\mathbb{R}^+;V)}.
	\end{eqnarray*}
	Finally, from \eqref{kernel_g'} of Assumption \ref{A1}, embedding $V\hookrightarrow L^{2}(\Omega)$, and Young's inequality, we have
	\begin{eqnarray*}
		J_5&=&-	\frac{2\epsilon}{(1-\kappa)}\int_0^{\infty}g'(\tau)\int_{\Omega}\zeta^t(\tau)w_t(t)dxd\tau\\
		&\le &	\frac{2\epsilon\alpha_1}{(1-\kappa)}\int_0^{\infty}g(\tau)\|\zeta^t(\tau)\|\|w_t(t)\|d\tau\\
		&\le &	\frac{2\epsilon\alpha_1\lambda_2^{1/2}}{(1-\kappa)}\int_0^{\infty}g(\tau)\|\Delta \zeta^t(\tau)\|\|w_t(t)\|d\tau\\
		&\le&\frac{\epsilon}{2}\|w_t(t)\|^2+\frac{2\epsilon\alpha_1^2\lambda_2}{(1-\kappa)}\|\zeta^t\|^2_{L^2_g(\mathbb{R}^+;V)}.
	\end{eqnarray*}
	Using the estimates of $J_1,\cdots,J_5$ in \eqref{asymp-5} we find the following inequality
	\begin{eqnarray}\label{asymp-6}
		\frac{2\epsilon}{(1-\kappa)}\varphi_2'(t)&\le&-\frac{3\epsilon}{2}\|w_t(t)\|^2+\frac{\epsilon\kappa}{4}\|\Delta w(t)\|^2+\frac{\epsilon m_0}{4} \|w_t(t)\|^2_{\Gamma_1}\nonumber\\
		&&+\epsilon C_R\|\zeta^t\|^2_{L^2_g(\mathbb{R}^+;V)}+\epsilon C_R\|w(t)\|^2_{2(p+1),\Gamma_1}.
	\end{eqnarray}
	\subsubsection*{Completion of the proof: application of the Gronwall's Lemma.}
	Thus, using the estimates \eqref{asymp-2}, \eqref{asymp-4}, and \eqref{asymp-6}, returning to \eqref{asymp-0}, we obtain that
	\begin{eqnarray}
		\label{asymp-7}
		\frac{d}{dt}G(t)&\le& -\frac{\epsilon}{2}\|w_t(t)\|^2-\frac{\epsilon\kappa}{2}\|\Delta w(t)\|^2-\frac{1}{2}\left(\alpha_2-\epsilon C_R\right)\|\zeta^t\|^2_{L^2_g(\mathbb{R}^+;V)}\nonumber\\
		&&-\frac{m_0}{2}(1-\epsilon)\|w_t(t)\|^2_{\Gamma_1}+C_{R,\epsilon}\|w(t)\|^2_{2(p+1),\Gamma_1}.
	\end{eqnarray}
	Again, choosing $\epsilon>0$ small such that
	$$\alpha_2-\epsilon C_R\ge \epsilon\quad \mbox{and}\quad 1-\epsilon \ge 0,$$
	it follows from \eqref{asymp-7} that
	\begin{align*}
		\frac{d}{dt}G(t)+\frac{\epsilon}{2}||S_tz^1_0-S_tz^2_0||^2_{\mathcal{H}}\le C_{R,\epsilon}\|w(t)\|^2_{2(p+1),\Gamma_1}.
	\end{align*}
	From \eqref{equiv}, we have
	\begin{align}
		\label{asymp-8}
		\frac{d}{dt}G(t)+\frac{\epsilon}{3}G(t)\le C_{R,\epsilon}\|w(t)\|^2_{2(p+1),\Gamma_1}.
	\end{align}
	Applying Gronwall's lemma in \eqref{asymp-8}, we obtain
	\begin{align*}
		G(t)\le e^{-\frac{\epsilon}{3}t}G(0)+C_{R,\epsilon}\int_0^te^{-\frac{\epsilon}{3}(t-s)}\|w(s)\|^2_{2(p+1),\Gamma_1}ds.
	\end{align*}
	Therefore, Again using the equivalence \eqref{equiv} we obtain
	\begin{align*}
		||S_tz^1_0-S_tz^2_0||^2_{\mathcal{H}}\le 3e^{-\frac{\epsilon}{3}t}||z^1_0-z^2_0||^2_{\mathcal{H}}+C_{R,\epsilon}\int_0^te^{-\frac{\epsilon}{3}(t-s)}\|u(s)-v(s)\|^2_{2(p+1),\Gamma_1}ds.
	\end{align*}
	Which completes the proof of Proposition \ref{prop-stab-ineq}.
\end{proof}

\subsubsection*{\it \underline{Subcritical case $p<p^*:=\frac{3}{n-4}$}}

\begin{lemma}\label{5.5}
\begin{align*}&
H^{1}(\Omega)\hookrightarrow H^{\frac{1}{2}}(\Gamma)\hookrightarrow L^{\frac{2(n-1)}{n-2}}(\Gamma), \ \ n\geq 3,\\
&
H^{2}(\Omega)\hookrightarrow H^{\frac{3}{2}}(\Gamma)\hookrightarrow\begin{cases} L^{\frac{2(n-1)}{n-4}}(\Gamma), \ \ n\geq 5,\\
L^{p}(\Gamma),\ \ \forall p\geq1,\ \ 1\leq n\leq4.\end{cases}
\end{align*}
\end{lemma}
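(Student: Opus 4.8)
The plan is to split each asserted chain into two independent steps: a \emph{trace step} carrying a function from $\Omega$ to its boundary $\Gamma$, and a \emph{Sobolev embedding step} performed intrinsically on $\Gamma$, viewed as a compact smooth $(n-1)$-dimensional manifold without boundary. The whole lemma is then obtained by composing the two and keeping careful track of the exponents.

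First I would invoke the classical trace theorem for a bounded domain with smooth boundary: the trace map $u\mapsto u|_{\Gamma}$ extends to a bounded linear operator $H^{s}(\Omega)\to H^{s-1/2}(\Gamma)$ for every $s>\frac12$. Taking $s=1$ gives the first embedding $H^{1}(\Omega)\hookrightarrow H^{1/2}(\Gamma)$ of line one, and taking $s=2$ gives $H^{2}(\Omega)\hookrightarrow H^{3/2}(\Gamma)$ of line two. Next, setting $m:=n-1$, I would apply the Sobolev embedding theorem on the compact manifold $\Gamma$: for $0\le\sigma<\frac{m}{2}$ one has $H^{\sigma}(\Gamma)\hookrightarrow L^{q}(\Gamma)$ with critical exponent $q=\frac{2m}{m-2\sigma}$, whereas for $\sigma=\frac{m}{2}$ the embedding holds for every finite $q\ge1$, and for $\sigma>\frac{m}{2}$ one has $H^{\sigma}(\Gamma)\hookrightarrow C(\Gamma)\subset L^{\infty}(\Gamma)$.

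For line one I take $\sigma=\frac12$, so the condition $\sigma<\frac{m}{2}$ reads $\frac12<\frac{n-1}{2}$, i.e. $n\ge3$, and the critical exponent is $\frac{2(n-1)}{(n-1)-1}=\frac{2(n-1)}{n-2}$, exactly as stated. For line two I take $\sigma=\frac32$; when $n\ge5$ the inequality $\frac32<\frac{n-1}{2}$ holds and the critical exponent is $\frac{2(n-1)}{(n-1)-3}=\frac{2(n-1)}{n-4}$, again as claimed. The remaining low-dimensional cases $1\le n\le4$ of line two are where the formula $\frac{2(n-1)}{n-4}$ degenerates, and they must be handled separately: for $n=4$ one has $\sigma=\frac32=\frac{m}{2}$ (with $m=3$), the borderline case, so $H^{3/2}(\Gamma)\hookrightarrow L^{p}(\Gamma)$ for all finite $p\ge1$; for $n=2,3$ one has $\sigma=\frac32>\frac{m}{2}$, whence $H^{3/2}(\Gamma)\hookrightarrow C(\Gamma)\subset L^{p}(\Gamma)$ for every $p\ge1$; and for $n=1$ the boundary $\Gamma$ is a finite set of points, so all the norms $\|\cdot\|_{p,\Gamma}$ are mutually comparable and the claim is immediate. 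Composing each trace embedding with the appropriate embedding on $\Gamma$ yields precisely the two chains of the lemma.

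I do not expect a genuine obstacle here, since the argument is fundamentally a bookkeeping of Sobolev indices on $\Omega$ and on the lower-dimensional manifold $\Gamma$. The only point demanding real care is the transition across $1\le n\le4$ in the second line, where the critical-exponent formula loses its meaning (vanishing or negative denominator) and one must instead fall back on the critical and supercritical Sobolev embeddings on $\Gamma$, together with the reinterpretation of $L^{p}(\Gamma)$ when $n=1$ and $\Gamma$ is zero-dimensional.
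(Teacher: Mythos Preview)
Your proof is correct and follows essentially the same route as the paper: both factor the chain into a trace step and a Sobolev step. The only organizational difference is that the paper quotes the composite trace--Sobolev embedding $H^{s}(\Omega)\hookrightarrow L^{q}(\Gamma)$ as a single black box and then recovers the intermediate embedding $H^{s-1/2}(\Gamma)\hookrightarrow L^{q}(\Gamma)$ via the quotient-norm characterization $\|\gamma_0 u\|_{H^{s-1/2}(\Gamma)}=\inf_{v|_\Gamma=\gamma_0 u}\|v\|_{H^{s}(\Omega)}$, whereas you apply the intrinsic Sobolev embedding on the $(n-1)$-dimensional manifold $\Gamma$ directly; your treatment of the low-dimensional cases $1\le n\le4$ is in fact more explicit than the paper's.
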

\begin{proof} Taking into account $H^{\frac{1}{2}}(\Gamma)=\gamma_0(H^1(\Omega)),  H^{\frac{3}{2}}(\Gamma)=\gamma_0(H^2(\Omega))$, where $\gamma_0$ is the trace operator from $\Omega$ to $\Gamma$,  and the Sobolev trace embedding: $H^{1}(\Omega)\hookrightarrow L^{\frac{2(n-1)}{n-2}}(\Gamma), H^{2}(\Omega)\hookrightarrow L^{\frac{2(n-1)}{n-4}}(\Gamma)$,    we have
\begin{align*}&
\|u\|_{{\frac{2(n-1)}{n-2}},\Gamma}=\|\gamma_0u\|_{{\frac{2(n-1)}{n-2}},\Gamma}\\
&\leq C\|\gamma_0u\|_{H^{\frac{1}{2}}(\Gamma)}:=C\inf_{u|_\Gamma=\gamma_0u}\|u\|_{H^{1}(\Omega)}\leq C\|u\|_{H^{1}(\Omega)},\ \ n\geq 3;\\
&\|u\|_{{\frac{2(n-1)}{n-4}},\Gamma}=\|\gamma_0u\|_{{\frac{2(n-1)}{n-4}},\Gamma}\\
&\leq C\|\gamma_0u\|_{H^{\frac{3}{2}}(\Gamma)}:=C\inf_{u|_\Gamma=\gamma_0u}\|u\|_{H^{2}(\Omega)}\leq C\|u\|_{H^{2}(\Omega)},\ \ n\geq 5,\\
&\|u\|_{p,\Gamma}=\|\gamma_0u\|_{p,\Gamma}\\
&\leq C\|\gamma_0u\|_{H^{\frac{3}{2}}(\Gamma)}:=C\inf_{u|_\Gamma=\gamma_0u}\|u\|_{H^{2}(\Omega)}\leq C\|u\|_{H^{2}(\Omega)},\ \ 1\leq n\leq  4.
\end{align*}
\end{proof}

\begin{Corollary}\label{Cor-quasi-stable} {\bf (Quasi-stability)}
	If $p<p^*$, the dynamical system $(\mathcal{H},S_t)$ corresponding to problem \eqref{P}-\eqref{initial-condition0} is quasi-stable.
\end{Corollary}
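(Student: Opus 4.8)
The plan is to verify directly the two inequalities of Definition \ref{Def-quasi-stable} for the splitting $\mathcal{H}=Y\times Z\times W$ with $Y=V$, $Z=L^2(\Omega)$ and $W=L^2_g(\mathbb{R}^+;V)$, which is precisely the structure demanded by Assumption \ref{assumption-quasi-stable}. The Lipschitz-type bound \eqref{local-lips} is already contained in the well-posedness analysis: the Gronwall argument that produced \eqref{weak-est-2}, applied to two weak solutions with data in a bounded set $B$, gives $\|S_tz^1-S_tz^2\|_{\mathcal{H}}^2\le e^{Ct}\|z^1-z^2\|_{\mathcal{H}}^2$, so one may take $a(t)=e^{Ct}$, which is locally bounded on $[0,\infty)$. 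Hence the only substantive task is to recast the stabilizability estimate \eqref{inequality-stab} of Proposition \ref{prop-stab-ineq} into the form \eqref{stabili}.

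To do so I would first strip the memory convolution from the last term of \eqref{inequality-stab}. Since $e^{-\frac{\epsilon}{3}(t-s)}\le 1$ and $\int_0^t e^{-\frac{\epsilon}{3}(t-s)}\,ds\le \frac{3}{\epsilon}$, one bounds
$$\int_0^t e^{-\frac{\epsilon}{3}(t-s)}\|u(s)-v(s)\|_{2(p+1),\Gamma_1}^2\,ds\le \frac{3}{\epsilon}\sup_{0<s<t}\|u(s)-v(s)\|_{2(p+1),\Gamma_1}^2.$$
Substituting this into \eqref{inequality-stab} yields exactly \eqref{stabili} with the choices
$$b(t)=3e^{-\frac{\epsilon}{3}t},\qquad c(t)\equiv\frac{3C_{R,\epsilon}}{\epsilon},\qquad n_Y(w):=\|w\|_{2(p+1),\Gamma_1}.$$
Here $b\in L^1(\mathbb{R}^+)$ with $b(t)\to 0$ as $t\to\infty$, and $c$ is constant, hence locally bounded (and even $c_\infty=\sup_t c(t)<\infty$, which will be needed later for the regularity statement via Theorem \ref{regularity-result}); thus all the functional requirements on $a,b,c$ are met.

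The decisive point---and the only place where the \emph{strict} subcriticality $p<p^*$ enters---is to check that $n_Y=\|\cdot\|_{2(p+1),\Gamma_1}$ is a \emph{compact} seminorm on $Y=V$, i.e. that the trace map $V\ni w\mapsto w|_{\Gamma_1}\in L^{2(p+1)}(\Gamma_1)$ is compact. As $V\hookrightarrow H^2(\Omega)$, it suffices that $H^2(\Omega)\hookrightarrow L^{2(p+1)}(\Gamma_1)$ be compact. For $1\le n\le 4$ the trace embedding of Lemma \ref{5.5} is compact for every finite exponent, so nothing is needed. For $n\ge 5$ the critical trace exponent in Lemma \ref{5.5} is $\frac{2(n-1)}{n-4}$, and the hypothesis $p<p^*=\frac{3}{n-4}$ says exactly that
$$2(p+1)<2\left(\frac{3}{n-4}+1\right)=\frac{2(n-1)}{n-4},$$
so $2(p+1)$ lies \emph{strictly} below the critical exponent. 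Subcriticality then upgrades the continuous embedding of Lemma \ref{5.5} to a compact one (through the compactness of $H^{3/2}(\Gamma)\hookrightarrow L^{2(p+1)}(\Gamma)$ for a subcritical exponent on the $(n-1)$-dimensional manifold $\Gamma$). This is precisely why $p<p^*$, and not merely the admissible $p\le p^*$, is required: at the endpoint the trace embedding is only continuous and $n_Y$ would fail to be compact.

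With the compact seminorm $n_Y$ identified and \eqref{stabili} in force on every bounded set $B\subset\mathcal{H}$ (the constants $\epsilon$ and $C_{R,\epsilon}$ depending only on the radius $R$ of $B$, which is permitted by Definition \ref{Def-quasi-stable}), both defining inequalities of quasi-stability are verified, so $(\mathcal{H},S_t)$ is quasi-stable on every bounded set, in particular on the absorbing set $\mathcal{B}$ and on the global attractor. I expect no further analytical difficulty: the Lipschitz bound and the integral reduction are routine, and the whole weight of the argument rests on the single compactness verification above, which is the natural obstacle and is resolved exactly by strict subcriticality.
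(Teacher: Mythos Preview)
Your proposal is correct and follows essentially the same approach as the paper: the Lipschitz bound \eqref{local-lips} with $a(t)=e^{Ct}$ from the well-posedness argument, the stabilizability estimate \eqref{stabili} extracted from Proposition \ref{prop-stab-ineq} with $b(t)=3e^{-\epsilon t/3}$ and $n_Y(w)=\|w\|_{2(p+1),\Gamma_1}$, and the compactness of $n_Y$ from strict subcriticality $p<p^*$. The only cosmetic differences are that the paper keeps $c(t)=C_{\epsilon,B}\int_0^t e^{-\frac{\epsilon}{3}(t-s)}ds$ rather than bounding it by the constant $3C_{R,\epsilon}/\epsilon$, and verifies compactness of $n_Y$ via the interpolation inequality $\|w\|_{2(p+1),\Gamma_1}\le C\|\Delta w\|^{\theta}\|w\|_{\Gamma_1}^{1-\theta}$ (with $\theta<1$ precisely when $p<p^*$) combined with the compact embedding $V\hookrightarrow\hookrightarrow L^2(\Gamma_1)$, whereas you invoke directly the compact subcritical trace embedding $H^{3/2}(\Gamma)\hookrightarrow\hookrightarrow L^{2(p+1)}(\Gamma)$; both routes are standard and equivalent.
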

\begin{proof}
	Note that, on the one hand,	using arguments analogous to those used in the proof of the Theorem \ref{theo-existence}-${\bf II.}$, we have
	\begin{align*}
		||S_tz^1_0-S_tz^2_0||^2_{\mathcal{H}}\le a(t)||S_tz^1_0-S_tz^2_0||^2_{\mathcal{H}},\quad \forall t\in[0,T].
	\end{align*}
	where $a(t):=e^{Ct}$. And, on the other hand, from \eqref{inequality-stab} (Proposition \ref{prop-stab-ineq}), we have
	\begin{align*}
		||S_tz^1_0-S_tz^2_0||^2_{\mathcal{H}}\le b(t)||z^1_0-z^2_0||^2_{\mathcal{H}}+c(t)\sup_{0\le s\le t}\left[\,\mu_V(u(s)-v(s))\,\right]^2,
	\end{align*}
	where $\mu_V(u(s)-v(s)):=\|u(s)-v(s)\|_{2(p+1),\Gamma_1}$,  $$b(t):=3e^{-\frac{\epsilon}{3}t},\quad\mbox{and}\quad c(t):=C_{\varepsilon,B}\int_0^te^{-\frac{\epsilon}{3}(t-s)}ds.$$
	Note that, for $p<p^*$ $\mu_{V}$ is a compact seminorm. Indeed, using interpolation inequality with $H^{3/2}(\Gamma_1)\hookrightarrow L^{2(p+1)}(\Gamma_1)\hookrightarrow L^{2}(\Gamma_1)$ and embedding $V\hookrightarrow H^{3/2}(\Gamma_1)$, we have
	\begin{align}\label{5.35}&
		\|u(s)-v(s)\|_{2(p+1),\Gamma_1}\le C\|\Delta u(s)-\Delta v(s)\|^{\theta}\|u(s)-v(s)\|^{1-\theta}_{\Gamma_1}\nonumber\\
&\le C_{B}\|u(s)-v(s)\|^{1-\theta}_{\Gamma_1},
	\end{align}
with $0<\theta<1$ for $1\leq p<p^*$.
 Thus, from \eqref{5.35} and the compacting embedding $V\hookrightarrow\hookrightarrow L^2(\Gamma_1)$. Which shows that $\mu_V$ is a compact seminorm. Moreover,
	since $a(t)$ and $c(t)$ are locally bounded on $\mathbb{R}^+$, and $b(t)\in L^1(\mathbb{R}^+)$ with
	$\lim_{t\rightarrow +\infty}b(t)=0$, it follows from Theorem \ref{quasi-stable-result} that $(\mathcal{H},S_t)$ is a quasi-stable system.
\end{proof}

\section{Exponential attractor}
We finish this work by proving the existence of an exponential attractor for the problem \eqref{P}-\eqref{initial-condition0} whose dimension is finite in some extended space $\widetilde{\mathcal{H}}\supset\mathcal{H}$. First of all, we recall the definition of exponential attractor that can be found in \cite{chueshov-yellow, hale}.
\begin{definition} A compact set $A_{exp}\subset X$ is said to be {\bf inertial (or a fractal exponential attractor)} of a dynamical system $(X,S_t)$ iff $A$ is a positively invariant set of finite fractal dimension and for every bounded set $D\subset X$ there exist positive constants $t_D$, $C_D$ and $\gamma_D$ such that
	$$d_X\left\{S_tD, A_{exp}\right\}\equiv \sup_{x\in D}\mbox{dist}_X\left(S_tx,A_{exp}\right)\le C_De^{-\gamma_D(t-t_D)},\quad t\ge t_D.$$
\end{definition}
The next Theorem establishes conditions for the existence of an exponential attractor for dissipative and quasi-stable systems.
\begin{theorem}[Theorem. 7.9.9, \cite{chueshov-yellow}] \label{theo_attrac_exponential}
	Let $(X,S_t)$ be given by $(\ref{inforce1})$ and satisfying
	$(\ref{inforce2})$. Assume that $({X},S_t)$ is dissipative and
	quasi-stable on some bounded absorbing set $\mathcal{B}$. In
	addition, suppose that there exists a space
	$\widetilde{X}\supseteq X$ such that mapping $t\mapsto S(t)z$ is
	H\"{o}lder continuous in $\widetilde{X}$ for each
	$z\in\mathcal{B},$ that is, there exist $0<\alpha\leq1$ and
	$C_{\mathcal{B},T}>0$ such that
	\begin{align*}
		\| S_{t_2}z-S_{t_1}z\|_{\widetilde{X}} \, \leq \,
		C_{\mathcal{B},T}|t_2-t_1|^{\alpha}, \quad t_1,t_2\in[0,T], \ z
		\in\mathcal{B}.
	\end{align*}
	Then the dynamical system $({X},S_t)$ possesses a (generalized)
	fractal exponential attractor whose dimension is finite in the
	space $\widetilde{X}$.
\end{theorem}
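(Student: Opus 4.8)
The plan is to follow the standard route for quasi-stable systems: pass to a discrete dynamical system on a forward invariant absorbing set, build a \emph{discrete} fractal exponential attractor by a squeezing/covering argument, and then lift it to continuous time, invoking the H\"older hypothesis only at the final dimension count in $\widetilde{X}$. First I would, assuming $\mathcal{B}$ closed without loss of generality, replace it by the forward invariant absorbing set $\mathbf{B}:=\overline{\bigcup_{t\ge t_1}S_t\mathcal{B}}\subseteq\mathcal{B}$, where $t_1$ is chosen so that $S_t\mathcal{B}\subseteq\mathcal{B}$ for $t\ge t_1$; since $\mathbf{B}\subseteq\mathcal{B}$, the quasi-stability estimates \eqref{local-lips}--\eqref{stabili} persist on $\mathbf{B}$, and $S_t\mathbf{B}\subseteq\mathbf{B}$. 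As $b(t)\to 0$, I would then fix a step $T_\ast>0$ with $b(T_\ast)\le\tfrac14$ and regard $S:=S_{T_\ast}$ as a map $\mathbf{B}\to\mathbf{B}$, Lipschitz by \eqref{local-lips} with constant $a(T_\ast)^{1/2}$.

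The core step is a discrete squeezing property. Evaluating \eqref{stabili} at $t=T_\ast$ gives, for $z^1,z^2\in\mathbf{B}$,
$$\|Sz^1-Sz^2\|_X^2\le \tfrac14\|z^1-z^2\|_X^2+c(T_\ast)\Big[\mathfrak{n}(z^1,z^2)\Big]^2,\qquad \mathfrak{n}(z^1,z^2):=\sup_{0\le s\le T_\ast}n_Y\big(u^1(s)-u^2(s)\big),$$
where $u^i$ is the $Y$-component of $S_s z^i$. The decisive point is that $\mathfrak{n}$ is a \emph{compact} seminorm relative to the $X$-topology: the map $z\mapsto\big(s\mapsto u(s)\big)$ sends bounded subsets of $\mathbf{B}$ into precompact subsets of $C([0,T_\ast];Y)$, the equicontinuity in $s$ being furnished by $u\in C^1(\mathbb{R}^+;Z)$ together with the $Y$-bound and the compact embedding $Y\hookrightarrow Z$ (Arzel\`a--Ascoli), after which composition with the compact seminorm $n_Y$ of Definition \ref{Def-quasi-stable} yields precompactness in $\mathfrak{n}$. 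With this, the abstract discrete exponential attractor theorem of \cite{chueshov-yellow} (a map on a bounded set contracting up to a compact seminorm) produces a compact, positively invariant set $A_d\subset\mathbf{B}$ with $\dim_f^X A_d<\infty$ attracting $\mathbf{B}$ at a uniform geometric rate, $\mathrm{dist}_X(S^n\mathbf{B},A_d)\le C_0 e^{-\gamma_0 n}$.

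Then I would lift $A_d$ to continuous time by setting
$$A_{\exp}:=\bigcup_{t\in[0,T_\ast]}S_tA_d.$$
Positive invariance follows by writing $t+\tau=kT_\ast+\sigma$ with $\sigma\in[0,T_\ast)$ and using $S^kA_d\subseteq A_d$; compactness in $X$ (hence in $\widetilde{X}$) follows since $(t,z)\mapsto S_tz$ is continuous on $[0,T_\ast]\times A_d$; and exponential attraction of $\mathbf{B}$ follows by combining the discrete rate with the uniform Lipschitz bound $\sup_{\sigma\in[0,T_\ast]}a(\sigma)^{1/2}<\infty$ and converting $n\sim t/T_\ast$ into a continuous rate (valid already in $X$, a fortiori in $\widetilde{X}$). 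It remains to bound the fractal dimension in $\widetilde{X}$, and this is exactly where the H\"older hypothesis enters. Equipping $[0,T_\ast]\times A_d$ with the metric $\rho\big((t_1,z_1),(t_2,z_2)\big)=|t_1-t_2|+\|z_1-z_2\|_X$ gives $\dim_f([0,T_\ast]\times A_d,\rho)=1+\dim_f^X A_d<\infty$, and the map $\Phi(t,z)=S_tz$ is $\alpha$-H\"older into $\widetilde{X}$ because $\|S_{t_1}z_1-S_{t_2}z_2\|_{\widetilde X}\le\|S_{t_1}z_1-S_{t_1}z_2\|_{\widetilde X}+\|S_{t_1}z_2-S_{t_2}z_2\|_{\widetilde X}\le C\|z_1-z_2\|_X+C_{\mathbf{B},T_\ast}|t_1-t_2|^{\alpha}\le C\rho^{\alpha}$ (using the continuous embedding $X\hookrightarrow\widetilde X$ for the first summand and the hypothesis for the second). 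Since an $\alpha$-H\"older image multiplies fractal dimension by at most $1/\alpha$ (see \cite{chueshov-yellow,hale}), one obtains $\dim_f^{\widetilde X}A_{\exp}\le\tfrac1\alpha\big(1+\dim_f^X A_d\big)<\infty$, completing the construction.

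The main obstacle I expect lies in the discrete step. Two points require genuine care: first, proving that $\mathfrak{n}$ is a compact seminorm with respect to $X$, where the time-equicontinuity supplied by \eqref{inforce2} and the compactness of $n_Y$ must be combined through Arzel\`a--Ascoli on the trajectory space; and second, the covering/entropy argument behind the discrete exponential attractor theorem itself, which builds a nested family of finite coverings whose cardinalities grow in a controlled geometric fashion precisely because $b(T_\ast)$ is strictly less than one while the residual term is compact. Everything else---positive invariance, continuous-time attraction, and the dimension bound in $\widetilde{X}$---is comparatively routine once these two facts are in place.
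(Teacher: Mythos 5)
The paper does not actually prove this statement---it is quoted verbatim as Theorem 7.9.9 of \cite{chueshov-yellow}---so the benchmark is the book's proof, and your architecture reproduces it faithfully: pass to a forward-invariant absorbing set $\mathbf{B}$, discretize at a step $T_*$ with $b(T_*)<1$ so that \eqref{stabili} becomes a squeezing estimate modulo the quantity $\mathfrak{n}$, obtain a discrete exponential attractor $A_d$ of finite fractal dimension, sweep it by $S_t$ for $t\in[0,T_*]$, and use the H\"older hypothesis only at the end, transporting the finite dimension of $[0,T_*]\times A_d$ through the map $(t,z)\mapsto S_tz$ into $\widetilde{X}$ with the factor $1/\alpha$. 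The lifting, positive invariance, attraction rate, and the dimension count $\dim_f^{\widetilde X}A_{\exp}\le \frac{1}{\alpha}\bigl(1+\dim_f^X A_d\bigr)$ are all correct, including the use of \eqref{local-lips} for the Lipschitz-in-$z$ half of the H\"older bound on the product metric.

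There is, however, a genuine flaw exactly at the point you yourself flag as delicate. Your justification that $\mathfrak{n}$ is compact claims that $z\mapsto(s\mapsto u(s))$ maps $\mathbf{B}$ into a precompact subset of $C([0,T_*];Y)$. That is false: boundedness in $Y$ plus equi-Lipschitz continuity in $Z$ (which is what \eqref{inforce2} and the $X$-bound on $\mathbf{B}$ supply, since $u_t$ is bounded in $Z$) gives, via Arzel\`a--Ascoli and $Y\hookrightarrow\hookrightarrow Z$, precompactness only in $C([0,T_*];Z)$. Precompactness in $C([0,T_*];Y)$ would, upon evaluation at $s=0$, force the $Y$-projection of the absorbing set $\mathbf{B}$ to be precompact in $Y$, which fails for a generic bounded absorbing set (and cannot be assumed before the attractor is constructed). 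Moreover, precompactness in $C([0,T_*];Z)$ alone does not control $\sup_s n_Y$, because a compact seminorm on $Y$ need not be continuous in the $Z$-norm. The correct route is to prove total boundedness of $\mathbf{B}$ in the pseudometric $\mathfrak{n}$ directly: fix a time grid $s_0<\dots<s_K$, use at each node the total boundedness of bounded $Y$-sets with respect to $n_Y$ (this is the definition of compact seminorm), and control the time increments through an estimate of the form $n_Y(x)\le \varepsilon\|x\|_Y+C_\varepsilon\|x\|_Z$. This last inequality is not a formal consequence of bare seminorm compactness, so it must be verified or assumed; it does hold for the seminorms arising in practice---in the present paper $n_Y=\mu_V=\|\cdot\|_{2(p+1),\Gamma_1}$, and the interpolation estimate \eqref{5.35} combined with Young's inequality yields precisely such a bound---and it is how the argument is closed in \cite{chueshov-yellow}. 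A final terminological point: since the flow is nonlinear, $\mathfrak{n}(z^1,z^2)$ is a pseudometric on $\mathbf{B}$, not a seminorm evaluated at $z^1-z^2$, so the discrete exponential attractor theorem must be invoked in its compact-pseudometric form, which is how it is stated in the source. With these repairs your proof coincides with the book's.
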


It is known that for elastic bearings such as springs, the forces $f(u)$ follow the {\bf Hook Law}, more specifically, we have $f(u)\equiv \lambda u$, with $\lambda>0$. In this condition, for our problem under study, the operator $A=\Delta^2$ is defined by the triple $\{D(A),L^2(\Omega),a(u,v)\}$,  where
$$a(u,v)=\int_{\Omega}\Delta u\Delta v+\lambda \int_{\Gamma_1}uvd\Gamma$$
with domain
$$D(A)=\left\{v\in V\cap H^4(\Omega);\ \Delta v=0\quad \mbox{on}\quad \Gamma_1\quad \mbox{and}\quad \frac{\partial(\Delta v)}{\partial \nu}=\lambda v\quad \mbox{on}\quad \Gamma_1\right\}.$$
The Spectral Theorem for self-adjoint operators guarantees the existence of a complete orthonormal system $\{\omega_k\}$ of $L^2(\Omega)$ given by the eigenfunctions of $A$. If $\{\lambda_k\}$ are the eigenvalues of operator $A$, then $\lambda_k\rightarrow +\infty$ as $k\rightarrow +\infty$. Using that $A$ is positive, given $\beta>0$ one has
$$D(A^{\beta})=\left\{u\in L^2(\Omega); \sum_{k=1}^{\infty}\lambda_k^{2\beta}|(u,\omega_k)|^2<\infty\right\}$$
and
$$A^{\beta}u=\sum_{k=1}^{\infty}\lambda_k^{\beta}(u,\omega_k)\omega_k,\quad \forall u\in D(A^\beta).$$
In the domain $D(A^{\beta})$ we assume the topology given by
$$\|u\|_{D(A^{\beta})}=\|A^{\beta}u\|_{L^2(\Omega)},$$
where the operators $A^{\beta}$ are self-adjoints, that is,
$$(A^{\beta}u,v)=(u,A^{\beta}v),\quad \forall u,v\in D(A^{\beta})$$
and, in particular $D(A^{1/2})=V$.

\medskip
Now, let $\mathcal{H}_{s}:=D(A^{\frac{s+1}{2}})\times D(A^{s/2})\times L^2_{g}(\mathbb{R}^+;D(A^{\frac{s+1}{2}}))$. In particular $\mathcal{H}\equiv \mathcal{H}_0$.

\begin{proposition}\label{Prop-exp-attractor}
	The dynamical system $(\mathcal{H},S_t)$ possesses a generalized fractal exponential attractor $\mathfrak{A}_{\exp}$ with finite dimension in the extended space
	$$\mathcal{H}_{-s}=D(A^{\frac{-s+1}{2}})\times D(A^{-s/2})\times L^2_{g}(\mathbb{R}^+;D(A^{\frac{-s+1}{2}})),\quad \mbox{with}\quad s\in (0,1].$$
\end{proposition}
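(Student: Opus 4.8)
The plan is to verify the three hypotheses of Theorem \ref{theo_attrac_exponential} with $X=\mathcal{H}$ and $\widetilde{X}=\mathcal{H}_{-s}$. Dissipativity is already furnished by the bounded absorbing set $\mathcal{B}=\overline{B}_{\mathcal{H}}(0,R)$ built in Proposition \ref{absorbing-set}, and quasi-stability on $\mathcal{B}$ is exactly Corollary \ref{Cor-quasi-stable}: in the Hook-law regime $f(u)=\lambda u$ one has $f'(u)=\lambda$, so \eqref{diff}--\eqref{diff2} hold with an arbitrarily small exponent $p$, in particular $p<p^*$. Since $\mathcal{H}_{-s}\supseteq\mathcal{H}$ for $s\in(0,1]$, the only genuinely new point is the H\"older continuity in time of $t\mapsto S_tz$ in the weaker topology of $\mathcal{H}_{-s}$, uniformly for $z\in\mathcal{B}$.

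The core idea is interpolation: on $\mathcal{B}$ the trajectory $z(t)=(u(t),u_t(t),\eta^t)$ stays bounded in $\mathcal{H}=\mathcal{H}_0$, while each component is Lipschitz in $t$ valued in a space one notch weaker than the corresponding factor of $\mathcal{H}_{-s}$; interpolating these two facts produces the H\"older exponent. For the displacement, $u(t_2)-u(t_1)=\int_{t_1}^{t_2}u_t$ together with the bound of $u_t$ in $L^2(\Omega)=D(A^0)$ makes $t\mapsto u(t)$ Lipschitz into $D(A^0)$; interpolating with the uniform $V=D(A^{1/2})$ bound and $D(A^{(1-s)/2})=[D(A^0),D(A^{1/2})]_{1-s}$ gives
\begin{equation*}
\|u(t_2)-u(t_1)\|_{D(A^{(1-s)/2})}\le \|u(t_2)-u(t_1)\|_{L^2}^{s}\,\|u(t_2)-u(t_1)\|_{V}^{1-s}\le C_{\mathcal{B}}|t_2-t_1|^{s}.
\end{equation*}
For the velocity, the Hook-law form $u_{tt}=-\kappa Au-\int_0^\infty g(\tau)A\eta^t(\tau)\,d\tau$ shows $u_{tt}$ is bounded in $D(A^{-1/2})$, since $\|Au\|_{D(A^{-1/2})}=\|u\|_V$ and $\|\int_0^\infty gA\eta^t\|_{D(A^{-1/2})}\le(1-\kappa)^{1/2}\|\eta^t\|_{L^2_g(\mathbb{R}^+;V)}$; hence $t\mapsto u_t(t)$ is Lipschitz into $D(A^{-1/2})$, and interpolating with the $L^2(\Omega)$ bound yields $\|u_t(t_2)-u_t(t_1)\|_{D(A^{-s/2})}\le C_{\mathcal{B}}|t_2-t_1|^{s}$.

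For the memory variable I would interpolate under the $L^2_g$ integral: applying the pointwise inequality $\|w\|_{D(A^{(1-s)/2})}\le\|w\|_{L^2}^{s}\|w\|_V^{1-s}$ to $w=\eta^{t_2}(\sigma)-\eta^{t_1}(\sigma)$ and then H\"older in $\sigma$ with conjugate exponents $1/s$ and $1/(1-s)$ (splitting the weight as $g=g^{s}g^{1-s}$) reduces matters to
\begin{equation*}
\|\eta^{t_2}-\eta^{t_1}\|_{L^2_g(\mathbb{R}^+;D(A^{(1-s)/2}))}\le \|\eta^{t_2}-\eta^{t_1}\|_{L^2_g(\mathbb{R}^+;L^2)}^{s}\,\|\eta^{t_2}-\eta^{t_1}\|_{L^2_g(\mathbb{R}^+;V)}^{1-s},
\end{equation*}
where the last factor is bounded on $\mathcal{B}$. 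It then remains to show $t\mapsto\eta^t$ is Lipschitz into $L^2_g(\mathbb{R}^+;L^2)$, which I would extract from the Dafermos representation $\eta^t(\sigma)=\widetilde u(t)-\widetilde u(t-\sigma)$ (with $\widetilde u$ the extension of $u$ by its history), splitting $\int_0^\infty g(\sigma)\|\eta^{t_2}(\sigma)-\eta^{t_1}(\sigma)\|^2\,d\sigma$ according to whether the shifted arguments $t_i-\sigma$ are nonnegative (controlled by the Lipschitz-in-$L^2$ bound on $u$ obtained above) or negative (controlled by the history-regularity assumption \eqref{assumption-varphi}, which supplies $\varphi'\in L^2_g(\mathbb{R}^+;V)$).

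Collecting the three estimates gives $\|S_{t_2}z-S_{t_1}z\|_{\mathcal{H}_{-s}}\le C_{\mathcal{B},T}|t_2-t_1|^{s}$, i.e.\ H\"older continuity with exponent $\alpha=s\in(0,1]$, and Theorem \ref{theo_attrac_exponential} then produces the generalized fractal exponential attractor $\mathfrak{A}_{\exp}$ of finite dimension in $\mathcal{H}_{-s}$. I expect the memory component to be the main obstacle: unlike $u$ and $u_t$, the field $\eta^t$ carries the infinite past and the transport term $\eta^t_s$, so its time-Lipschitz bound in $L^2_g(\mathbb{R}^+;L^2)$ cannot be read off the equation as cleanly and must be won from the translation identity together with the decay of $g$ and the hypothesis on $\varphi$; keeping every constant uniform over $z\in\mathcal{B}$ (rather than only on the smoother attractor, where Theorem \ref{regularity-result} would give $\eta^t_t$ bounded) is the delicate part.
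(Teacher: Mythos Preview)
Your overall strategy---verify the hypotheses of Theorem \ref{theo_attrac_exponential}, with H\"older continuity in $\mathcal{H}_{-s}$ as the only new ingredient---matches the paper's, but the paper reaches that H\"older estimate by a much shorter route and with a weaker exponent. Rather than arguing component by component for Lipschitz-in-time bounds in a one-notch-weaker space and then interpolating, the paper simply asserts $z_t=(u_t,u_{tt},\eta^t_t)\in L^2(0,T;\mathcal{H}_{-1})$ uniformly on $\mathcal{B}$, obtains from Cauchy--Schwarz the bound $\|S_{t_2}z-S_{t_1}z\|_{\mathcal{H}_{-1}}\le C_{\mathcal{B}}|t_2-t_1|^{1/2}$, and then interpolates \emph{globally} between $\mathcal{H}_0$ and $\mathcal{H}_{-1}$ to get H\"older exponent $s/2$ (not $s$) in $\mathcal{H}_{-s}$. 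Since Theorem \ref{theo_attrac_exponential} only requires \emph{some} $\alpha\in(0,1]$, this suffices.

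Your finer scheme has two concrete gaps. First, the abstract form you write for the velocity, $u_{tt}=-\kappa Au-\int_0^\infty g\,A\eta^t$, drops the nonlocal boundary damping: even under Hook's law the weak equation still carries the term $(M(\|u\|^2_{\Gamma_1})u_t,\cdot)_{\Gamma_1}$, which contributes to $u_{tt}$ as an element of $V^*=D(A^{-1/2})$. That contribution is controlled by $\|u_t\|_{\Gamma_1}$, and the energy identity only places $u_t|_{\Gamma_1}$ in $L^2(0,T;L^2(\Gamma_1))$, not in $L^\infty$ in time; so $u_{tt}\in L^2(0,T;D(A^{-1/2}))$ and $t\mapsto u_t$ is merely H\"older-$1/2$ in $D(A^{-1/2})$, not Lipschitz. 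Second, for the memory variable you invoke the translation formula $\eta^t(\sigma)=\widetilde u(t)-\widetilde u(t-\sigma)$ and the history hypothesis \eqref{assumption-varphi}. In the autonomous dynamical-systems setting the initial datum $z_0=(u_0,u_1,\eta_0)\in\mathcal{B}$ is an arbitrary point of $\mathcal{H}$: $\eta_0$ is just an element of $L^2_g(\mathbb{R}^+;V)$ with no assumed $s$-regularity and no underlying ``history'' $\widetilde u$; the piece of the trajectory with $\sigma>t$ reads $\eta^t(\sigma)=\eta_0(\sigma-t)+u(t)-u_0$, and its $t$-derivative involves $\eta_0'$, which you cannot bound uniformly over $\mathcal{B}$. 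You yourself flag this as ``the delicate part,'' and indeed it is where your plan breaks; the paper sidesteps the issue by settling for the $L^2$-in-time bound on $z_t$ and the exponent $s/2$.
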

\begin{proof}
	Let $\mathfrak{B}$ be the absorbing set obtained in the Proposition \ref{absorbing-set}. From Proposition \ref{Cor-quasi-stable},  the system $(\mathcal{H},S_t)$ is quasi-stable on $\mathfrak{B}$.
	Next, we consider the mapping $t\in[0,T]\mapsto S_tz\in \mathcal{H}_{-1}=L^2(\Omega)\times D(A^{-1/2})\times L^2_{g}(\mathbb{R}^+;L^2(\Omega))$. Let $z(t) = S_tz_0$ be a solution with initial data $z(0)\in \mathfrak{B}$. From the well-posedness result for the problem \eqref{P}-\eqref{initial-condition0} established in the Theorem \ref{theo-existence}, it is easy to show that $z_t=(u_t,u_{tt},\eta^{t}_t)\in L^2(0,T;\mathcal{H}_{-1})$. Then, we have
	\begin{eqnarray*}
		||S_{t_1}z-S_{t_2}z||_{\mathcal{H}_{-1}}&\le& \int_{t_1}^{t_2}\left\|\frac{d}{dt}z(s)\right\|_{\mathcal{H}_{-1}}ds\le \left(\int_{t_1}^{t_2}\|(u_t(s),u_{tt}(s),\eta^{s}_t)\|_{\mathcal{H}_{-1}}^2ds\right)^{\frac{1}{2}}|t_2-t_1|^{\frac{1}{2}}\\
		&\le& C_{\mathfrak{B}}|t_2-t_1|^{1/2},
	\end{eqnarray*}
	$0\le t_1< t_2\le T.$
	This  shows that for each $z\in \mathfrak{B}$, the map $t\mapsto S_tz$ is H\"older continuous in the extended space $\mathcal{H}_{-1}$ with exponent $\gamma=1/2$. Now, we assume that $0<s<1$. Then, using that $\mathcal{H}_{0}\hookrightarrow \mathcal{H}_{-s}\hookrightarrow \mathcal{H}_{-1}$, applying interpolation theorem in each component of $\mathcal{H}_{-s}$ and using the H\"older continuity in $\mathcal{H}_{-1}$, we obtain
	\begin{eqnarray*}
		||S_{t_1}z-S_{t_2}z||_{\mathcal{H}_{-s}}&\le& C_s||S_{t_1}z-S_{t_2}z||^{1-s}_{\mathcal{H}}||S_{t_1}z-S_{t_2}z||^{s}_{\mathcal{H}_{-1}}\\
		&\le& C_{\mathfrak{B}}|t_2-t_1|^{s/2},\quad \mbox{with}\quad s\in (0,1].
	\end{eqnarray*}
	This shows that $t\mapsto S_tz$ is H\"older continuous in the extended spaces $\mathcal{H}_{-s}$ for $s\in (0,1]$. Therefore, the existence of a generalized exponential attractor in $\mathcal{H}_{-s}$ follows from Theorem \ref{theo_attrac_exponential}.
\end{proof}

\subsubsection*{Declarations}

\paragraph{Ethical Approval} 
 Not applicable to this article.
\paragraph{Funding} 
Linfang Liu has been partially supported by NSF of China (No. 11901448) as well as by Scientific Research  Foundation of Northwest University of China. Vando Narciso has been partially supported by Fundect/CNPq (No. 15/2024), Brasil. Zhijian Yang has been partially supported by the National
 Natural Science Foundation   of China (Grant No.12171438).

\paragraph{Availability of data and materials}
Availability of data and materials does not apply to this article as no datasets were generated or analyzed and no materials were used during the current study.

\paragraph{Conflict of interest} On behalf of all authors, the corresponding author states that there is no conflict of interest.

\end{document}